\newcounter{countercheck}[subsection]
\theoremstyle{plain}
\newtheorem{theorem}[countercheck]{Theorem}
\newtheorem{proposition}[countercheck]{Proposition}
\newtheorem{lemma}[countercheck]{Lemma}
\newtheorem{corollary}[countercheck]{Corollary}
\theoremstyle{definition}
\newtheorem{convention}[countercheck]{Convention}
\theoremstyle{remark}
\renewcommand{\phi}{\varphi}
\renewcommand{\epsilon}{\varepsilon}
\newcommand{\NNN}{\mathbb{N}_0}
\newcommand{\NN}{\mathbb{N}}
\newcommand{\Cp}{\mathcal{C}_{\epsilon}}
\newcommand{\Cm}{\mathcal{C}_{-\epsilon}}
\newcommand{\C}{\mathcal{C}}
\renewcommand{\P}{\mathcal{P}}
\newcommand{\Cbf}{\textbf{\textup{C}}}
\DeclareMathOperator{\proj}{proj}
\DeclareMathOperator{\Opp}{Opp}
\DeclareMathOperator{\co}{\textup{(co)}}
\numberwithin{equation}{section} 
\title{On isometries of twin buildings}
\author{Sebastian \textit{Bischof}$^1$\footnote{email: sebastian.bischof@math.uni-giessen.de} \and Anton \textit{Chosson}$^2$\footnote{email: anton.chosson@u-psud.fr} \and Bernhard \textit{M\"uhlherr}$^1$\footnote{email: bernhard.m.muehlherr@math.uni-giessen.de} \\ \\
	$^1$ Mathematisches Institut, Arndtstra\ss e 2, 35392 Gie\ss en, Germany \\
	$^2$ IUT d'Orsay Universit\'e Paris-Sud, F-91405 Orsay Cedex, France}
\date{\today}
\begin{document}

\maketitle

\section{Introduction}

Twin buildings were introduced by Ronan and Tits in the late
$1980$s. Their definition was motivated by the theory of Kac-Moody groups
over fields. Each such group acts naturally on a pair of buildings
and the action preserves an opposition relation between the chambers
of the two buildings. This opposition relation shares many important
properties with the opposition relation on the chambers of a spherical building.
Thus, twin buildings appear to be natural generalisations of spherical
buildings with infinite Weyl groups.

One of the most celebrated results in the theory of abstract buildings
is Tits' classification of irreducible spherical buildings of rank at least $3$
in \cite{Ti74}. The decisive step in this classification is the proof of
a local-to-global result for spherical buildings. In his survey paper \cite{Ti92}
Tits proves several results that are inspired by his strategy in the
spherical case and he discusses several obstacles for obtaining
a similar local-to-global result for twin buildings. A first observation in this discussion
is that the local-to-global principle seems to be valid only for
$2$-spherical twin buildings. But even in this case the question about
the validity of the local-to-global principle remained open. Based on Tits'
contributions in \cite{Ti92} the local-to-global principle was proved in \cite{MR95}
for $2$-spherical twin buildings that satisfy an additional assumption,
called Condition $\co$. In \cite{MR95} Condition $\co$ is discussed in 
some detail and it turns out that it is rather
mild. On the other hand, it follows from that discussion that there
are affine twin buildings of type $\widetilde{C}_2$ that do not satisfy Condition $\co$.

The question whether the local-to-global principle for $2$-spherical buildings holds without 
Condition $\co$ is still open at present. The main result of this paper
is a contribution to the local-to-global principle without assuming any additional
condition.
It was proved independently by A.C. in [Ch00] and B.M. in [Mu97] but 
never published.
In the present article we follow the basic strategy of these references.
However, several contributions to the theory of twin buildings that 
have been made in the meantime
provided various improvements of the arguments and exposition.
Our motivation to publish
the paper at this point is provided by the fact that it can be used
to prove the local-to-global principle for $2$-spherical twin buildings
under a weaker assumption than Condition $\co$. This yields in
particular the local-to-global principle for all affine twin buildings of
rank at least $3$ and in particular for those which do not satisfy Condition $\co$.
This will be published in a subsequent paper. Thus,
the present paper should be seen as the first in a series of two papers
in which we intend to improve the main result of \cite{MR95}.

\subsection*{The main result}

In order to give the precise statement of the main result it is convenient to fix some notation.

Let $(W,S)$ be a Coxeter system. We call $(W,S)$ \textit{$2$-spherical} if
$st$ has finite order for all $s,t \in S$.

A \textit{building of type $(W,S)$} is a pair $\Delta = (\C,\delta)$ consisting
of a non-empty set $\C$ and a mapping $\delta: \C \times \C \longrightarrow W$ (see Section \ref{Sectionpreliminaries} for the precise definition).
The elements of $\C$ are called the \textit{chambers} of $\Delta$ and the mapping $\delta$
is called the \textit{Weyl-distance}. For $J \subseteq S$ and $c \in \C$, the set
$R_J(c) := \{ d \in \C \mid \delta(c,d) \in \langle J \rangle \}$ is called the \textit{$J$-residue}
of $c$ and for $s \in S$ the set $\P_s(c) := R_{\{s\}}(c)$ is called the \textit{$s$-panel} of $c$. The set
\[ E_2(c) := \bigcup\limits_{J \subseteq S, \vert J \vert \leq 2} R_J(c) \]
is called the \textit{foundation of $\Delta$ at $c$}.
The building $\Delta$ is said to be \textit{thick} if $\vert \P_s(c) \vert \geq 3$ for all $(s,c) \in S \times \C$.

A \textit{twin building of type $(W,S)$} is a triple $\Delta = (\Delta_+,\Delta_-,\delta_*)$
consisting of two buildings $\Delta_+ =(\C_+,\delta_+)$ and $\Delta_-=(\C_-,\delta_-)$ of type $(W,S)$
and a \textit{codistance function} (or \textit{twinning})
\[ \delta_*: (\C_+ \times \C_-) \cup (\C_- \times \C_+) \longrightarrow W \]
and we refer to Section \ref{Sectiontwinbuildings} for the precise definition.
For a chamber $c \in \C_+$ (resp. $c \in \C_-$) the set
$E_2(c)$ denotes its foundation of $\Delta_+$ (resp. $\Delta_-$) and
$\Delta$ is \textit{thick} if $\Delta_+$ and $\Delta_-$ are thick. Two chambers $c_+ \in \C_+$
and $c_- \in \C_-$ are said to be \textit{opposite in $\Delta$} if $\delta_*(c_+,c_-) = 1_W$.

Let $\Delta=((\C_+,\delta_+),(\C_-,\delta_-),\delta_*)$
and $\Delta'=((\C_+',\delta_+'),(\C_-',\delta_-'),\delta_*')$ be twin buildings of type $(W,S)$
and let ${\cal X} \subseteq \C_+ \cup \C_-, {\cal X}' \subseteq \C_+' \cup \C_-'$ be sets of
chambers of $\Delta$ and $\Delta'$. An \textit{isometry from ${\cal X}$ to ${\cal X}'$}
is a bijection from ${\cal X}$ onto ${\cal X}'$ which preserves signs and the Weyl-distance
(resp. codistance) for each pair $(x,y) \in {\cal X}^2$.

We are now in the position to give the precise statement of our main result.

\medskip
\noindent
\textbf{Main result:} Let $(W,S)$ be a $2$-spherical Coxeter system and let
$\Delta=((\C_+,\delta_+),(\C_-,\delta_-),\delta_*)$
and $\Delta'=((\C_+',\delta_+'),(\C_-',\delta_-'),\delta_*')$ be thick twin buildings of type $(W,S)$.
Let $c_+ \in \C_+, c_- \in \C_-$ be opposite chambers in $\Delta$ and
let $c_+' \in \C_+',c_-' \in \C_-'$ be opposite chambers in $\Delta'$.

Then each isometry $$\varphi: E_2(c_+) \cup \{ c_- \} \rightarrow E_2(c_+') \cup \{ c_-' \}$$
extends to an isometry
$$\psi: \C_+ \cup E_2(c_-) \rightarrow \C_+' \cup E_2(c_-').$$

\medskip
\noindent
Several remarks on the main result of this paper are in order.

\smallskip
\noindent
\textbf{$1$.} Note that our main result does not assert the uniqueness of the extension $\psi$. At present,
the uniqueness of $\psi$ is an open question that is most relevant for a possible proof of the
local-to-global principle. Indeed, the key observation in \cite{MR95} was that the extension $\psi$
is unique if $\Delta$ satisfies Condition $\co$.

\smallskip
\noindent
\textbf{$2$.} A slightly weaker version of our main result was stated by Tits in the early $1990$s
(as Th\'eor\`eme $1$ in \cite{Ti89/90} and as Theorem $2$ in \cite{Ti92}) and an outline
of a proof is given in both references. However, as pointed out in Paragraph $2.8$ 
of \cite{Ti97/98}, one of the claims made in the outline remains unclear. That our main result holds for twin buildings satisfying Condition $\co$ was verified by Ronan (see Theorem $(7.5)$ in \cite{Ro00}).

\smallskip
\noindent
\textbf{$3$.} The proof of the main result combines an idea of Tits given in the outline
mentioned in the previous remark with a technique that he used in \cite{Ti74}.
More concretely, for a chamber $c$ and an apartment containing $c$ in a twin building
one can define two retraction mappings. We call them $\pi$- and $\omega$-retractions.
The outline in \cite{Ti89/90} and \cite{Ti92} uses $\pi$-retractions and $\omega$-retractions
are used in \cite{Ti74} for the proof of the local-to-global principle for spherical buildings.
The key observation in this paper is that the main result can proved by using them both.

\section{Preliminaries}\label{Sectionpreliminaries}

\subsection*{Coxeter system}

Let $S$ be a set. A \textit{Coxeter matrix} over $S$ is a matrix $M = (m_{st})_{s, t \in S}$, whose entries are in $\NN \cup \{\infty\}$ such that $m_{ss} = 1$ for all $s\in S$ and $m_{st} = m_{ts} \geq 2$ for all $s \neq t\in S$. For $J \subseteq S$ we set $M_J := (m_{st})_{s, t \in J}$.  The \textit{Coxeter diagram} corresponding to $M$ is the labeled graph $(S, E(S))$, where $E(S) = \{ \{s, t \} \mid m_{st}>2 \}$ and where each edge $\{s,t\}$ is labeled by $m_{st}$ for all $s, t \in S$. As the Coxeter matrix and the corresponding Coxeter diagram carry the same information we do not distinguish between them formally. We call the Coxeter diagram \textit{irreducible}, if the underlying graph is connected, and we call it \textit{$2$-spherical}, if $m_{st} <\infty$ for all $s,t \in S$. The \textit{rank} of a Coxeter diagram is the cardinality of the set of its vertices.

Let $M = (m_{st})_{s, t \in S}$ be a Coxeter matrix over a set $S$. A \textit{Coxeter system of type $M$} is a pair $(W, S)$ consisting of a group $W$ and a set $S \subseteq W$ of generators of $W$ such that the set $S$ and the relations $\left(st \right) ^{m_{st}}$ for all $s,t\in S$ constitute a presentation of $W$.

Let $(W, S)$ be a Coxeter system of type $M$. The pair $(\langle J \rangle, J)$ is a Coxeter system of type $M_J$ (cf. \cite[Ch. IV, §$1$ Th\'eor\`eme $2$]{Bo68}). For an element $w\in W$ we put $\ell(w) := \min\{ k\in \NNN \mid \exists s_1, \ldots, s_k \in S: w = s_1 \cdots s_k \}$. The number $\ell(w)$ is called the \textit{length} of $w$. We call $J \subseteq S$ \textit{spherical} if $\langle J \rangle$ is finite. Given a spherical subset $J$ of $S$, there exists a unique element of maximal length in $\langle J \rangle$, which we denote by $r_J$ (cf. \cite[Corollary $2.19$]{AB08}); moreover, $r_J$ is an involution.

\begin{convention}
	For the rest of this paper let $S$ be a set, let $M$ be a Coxeter matrix over $S$ and let $(W, S)$ be a Coxeter system of type $M$.
\end{convention}

\subsection*{Buildings}

A \textit{building of type $(W, S)$} is a pair $\Delta = (\C, \delta)$ where $\C$ is a non-empty set and where $\delta: \C \times \C \to W$ is a \textit{distance function} satisfying the following axioms, where $x, y\in \C$ and $w = \delta(x, y)$:
\begin{enumerate}[label=(Bu\arabic*), leftmargin=*]
	\item $w = 1_W$ if and only if $x=y$;
	
	\item if $z\in \C$ satisfies $s := \delta(y, z) \in S$, then $\delta(x, z) \in \{w, ws\}$, and if, furthermore, $\ell(ws) = \ell(w) +1$, then $\delta(x, z) = ws$;
	
	\item if $s\in S$, there exists $z\in \C$ such that $\delta(y, z) = s$ and $\delta(x, z) = ws$.
\end{enumerate}
Let $\Delta = (\C, \delta)$ be a building of type $(W, S)$. The \textit{rank} of $\Delta$ is the rank of the underlying Coxeter system. The elements of $\C$ are called \textit{chambers}. Given $s\in S$ and $x, y \in \C$, then $x$ is called \textit{$s$-adjacent} to $y$, if $\delta(x, y) \in \langle s \rangle$. The chambers $x, y$ are called \textit{adjacent}, if they are $s$-adjacent for some $s\in S$. A \textit{gallery} joining $x$ and $y$ is a sequence $(x = x_0, \ldots, x_k = y)$ such that $x_{l-1}$ and $x_l$ are adjacent for any $1 \leq l \leq k$; the number $k$ is called the \textit{length} of the gallery.

Given a subset $J \subseteq S$ and $x\in \C$, the \textit{$J$-residue} of $x$ is the set $R_J(x) := \{y \in \C \mid \delta(x, y) \in \langle J \rangle \}$. Each $J$-residue is a building of type $(\langle J \rangle, J)$ with the distance function induced by $\delta$ (cf. \cite[Corollary $5.30$]{AB08}). A \textit{residue} is a subset $R$ of $\C$ such that there exists $J \subseteq S$ and $x\in \C$ with $R = R_J(x)$. Since the subset $J$ is uniquely determined by $R$, the set $J$ is called the \textit{type} of $R$ and the \textit{rank} of $R$ is defined to be the cardinality of $J$. A residue is called \textit{spherical} if its type is a spherical subset of $S$. A \textit{panel} is a residue of rank $1$. An \textit{$s$-panel} is a panel of type $\{s\}$ for $s\in S$. The building $\Delta$ is called \textit{thick}, if each panel of $\Delta$ contains at least three chambers.

Given $x\in \C$ and $k \in \NNN$ then $E_k(x)$ denotes the union of all residues of rank at most $k$ containing $x$. It is a fact, that the set $E_k(x)$ determines the chamber $x$ uniquely, if $k < \vert S \vert$.

Given $x\in \C$ and a $J$-residue $R \subseteq \C$, then there exists a unique chamber $z\in R$ such that $\ell(\delta(x, y)) = \ell(\delta(x, z)) + \ell(\delta(z, y))$ for any $y\in R$ (cf. \cite[Proposition $5.34$]{AB08}). The chamber $z$ is called the \textit{projection of $x$ onto $R$} and is denoted by $\proj_R x$. Moreover, if $z = \proj_R x$ we have $\delta(x, y) = \delta(x, z) \delta(z, y)$ for each $y\in R$.

A subset $\Sigma \subseteq \C$ is called \textit{convex} if $\proj_P c \in \Sigma$ for every $c\in \Sigma$ and every panel $P \subseteq \C$ which meets $\Sigma$. A subset $\Sigma \subseteq \C$ is called \textit{thin} if $P \cap \Sigma$ contains exactly two chambers for every panel $P \subseteq \C$ which meets $\Sigma$. An \textit{apartment} is a non-empty subset $\Sigma \subseteq \C$, which is convex and thin. It is a basic fact that in an apartment the map $\sigma_c: \Sigma \to W, x \mapsto \delta(c, x)$ is a bijection for any $c\in \C$.

\subsection*{Chamber systems}

Let $I$ be a set. A \textit{chamber system} over $I$ is a pair $\Cbf = \left( \C, (\sim_i)_{i\in I} \right)$ where $\C$ is a non-empty set whose elements are called \textit{chambers} and where $\sim_i$ is an equivalence relation on the set of chambers for each $i\in I$. Given $i\in I$ and $c, d \in \C$, then $c$ is called \textit{$i$-adjacent} to $d$ if $c \sim_i d$. The chambers $c, d$ are called \textit{adjacent} if they are $i$-adjacent for some $i \in I$.

A \textit{gallery} in $\Cbf$ is a sequence $(c_0, \ldots, c_k)$ such that $c_{\mu} \in \C$ for all $0 \leq \mu \leq k$ and such that $c_{\mu -1}$ is adjacent to $c_{\mu}$ for all $1 \leq \mu \leq k$. The number $k$ is called the \textit{length} of the gallery. Given a gallery $G = (c_0, \ldots, c_k)$, then we put $\beta(G) := c_0$ and $\epsilon(G) := c_k$. If $G$ is a gallery and if $c, d \in \C$ such that $c = \beta(G), d = \epsilon(G)$, then we say that $G$ is a \textit{gallery from $c$ to $d$} or \textit{$G$ joins $c$ and $d$}. The chamber system $\Cbf$ is said to be \textit{connected}, if for any two chambers there exists a gallery joining them. A gallery $G$ will be called \textit{closed} if $\beta(G) = \epsilon(G)$.

Given a gallery $G = (c_0, \ldots, c_k)$ then $G^{-1}$ denotes the gallery $(c_k, \ldots, c_0)$ and if $H = (c_0', \ldots, c_l')$ is a gallery such that $\epsilon(G) = \beta(H)$, then $GH$ denotes the gallery $(c_0, \ldots, c_k = c_0', \ldots, c_l')$.

Let $J$ be a subset of $I$. A \textit{$J$-gallery} is a gallery $(c_0, \ldots, c_k)$ such that for each $1 \leq \mu \leq k$ there exists an index $j \in J$ with $c_{\mu -1} \sim_j c_{\mu}$. Given two chambers $c, d$, then we say that $c$ is \textit{$J$-equivalent} with $d$ if there exists a $J$-gallery joining $c$ and $d$ and we write $c \sim_J d$ in this case. Given a chamber $c$ and a subset $J$ of $I$ then the set $R_J(c) := \{ d\in \C \mid c \sim_J d \}$ is called the \textit{$J$-residue} of $c$.

Let $\Delta = (\C, \delta)$ be a building of type $(W, S)$. Then we define the chamber system $\Cbf(\Delta)$ as follows: The set of chambers is identified with $\C$ and two chambers $x, y$ are defined to be $s$-adjacent if $\delta(x, y) \in \langle s \rangle$.

\subsection*{Homotopy of galleries and simple connectedness}

In the context of chamber systems there is the notation of $m$-homotopy and $m$-simple connectedness for each $m \in \NN$. In this paper we are only concerned with the case $m=2$. Therefore our definitions are always to be understood as a specialisation of the general theory to the case $m=2$.

Let $\Cbf = (\C, (\sim_i)_{i\in I})$ be a chamber system over a set $I$. Two galleries $G$ and $H$ are said to be \textit{elementary homotopic} if there exists two galleries $X, Y$ and two $J$-galleries $G_0, H_0$ for some $J \subseteq I$ of cardinality at most $2$ such that $G = XG_0Y, H = XH_0Y$. Two galleries $G, H$ are said to be \textit{homotopic} if there exists a finite sequence $G_0, \ldots, G_l$ of galleries such that $G_0 = G, G_l = H$ and such that $G_{\mu-1}$ is elementary homotopic to $G_{\mu}$ for all $1 \leq \mu \leq l$.

If two galleries $G, H$ are homotopic, then it follows by definition that $\beta(G) = \beta(H)$ and $\epsilon(G) = \epsilon(H)$. A closed gallery $G$ is said to be \textit{null-homotopic} if it is homotopic to the gallery $(\beta(G))$. The chamber system $(\C, (\sim_i)_{i\in I})$ is called \textit{simply connected} if it is connected and if each closed gallery is null-homotopic.

Let $\mathcal{X} \subseteq \C$ and let $\textbf{X} = \left(\mathcal{X}, (\sim_i)_{i\in I} \right)$ be the chamber system obtained by restricting the equivalence relations $\sim_i$ to $\mathcal{X}$. The subset $\mathcal{X}$ will be called \textit{simply connected} if the chamber system $\textbf{X}$ is simply connected.

\begin{proposition}\label{Mu97Proposition2.2}
	Let $\Delta$ be a building of type $(W, S)$. Then the chamber system $\Cbf(\Delta)$ is simply connected.
\end{proposition}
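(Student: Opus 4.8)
The plan is to show that $\Cbf(\Delta)$ is connected and that every closed gallery is null-homotopic, the latter by induction on the length of a minimal gallery from a fixed base chamber.

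First I would settle connectedness. Fix chambers $x, y \in \C$ and write $w = \delta(x,y)$ with $w = s_1 \cdots s_k$ a reduced word. Using axiom (Bu3) repeatedly one builds a gallery $(x = x_0, x_1, \ldots, x_k = y)$ with $\delta(x_{\mu-1}, x_\mu) = s_\mu$ and $\delta(x, x_\mu) = s_1 \cdots s_\mu$; in particular $\Cbf(\Delta)$ is connected, and moreover such minimal galleries have length $\ell(\delta(x,y))$, which I will use as the induction parameter.

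For simple connectedness, fix a base chamber $a \in \C$ and let $G = (a = c_0, c_1, \ldots, c_n = a)$ be a closed gallery; I want to show $G$ is null-homotopic. Set $\ell_\mu := \ell(\delta(a, c_\mu))$. The idea is to homotope $G$ into a gallery that proceeds monotonically away from $a$ and then monotonically back, and then to contract such a ``thin-looking'' gallery. More precisely, I would argue by induction on $\max_\mu \ell_\mu$ (and for fixed maximum, on the number of indices attaining it, or on $n$). Suppose $\ell_{\mu-1} < \ell_\mu > \ell_{\mu+1}$ at some interior index, so $c_{\mu-1}, c_{\mu+1}$ both lie strictly closer to $a$ than $c_\mu$ does. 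Let $s, t \in S$ be such that $c_{\mu-1} \sim_s c_\mu$ and $c_\mu \sim_t c_{\mu+1}$. If $s = t$, then $c_{\mu-1}, c_{\mu+1}$ lie in the same $s$-panel as $c_\mu$, and within that rank-$1$ residue one may replace the subgallery $(c_{\mu-1}, c_\mu, c_{\mu+1})$ by a gallery inside the panel avoiding $c_\mu$ — this is an elementary homotopy of type $J = \{s\}$ — after which the maximum multiplicity drops. If $s \neq t$, one uses the projection onto the $\{s,t\}$-residue $R$ containing $c_{\mu-1}, c_\mu, c_{\mu+1}$: because $\ell$ increased then decreased at $c_\mu$, the chamber $c_\mu$ is not $\proj_R a$, whereas $c_{\mu-1}$ and $c_{\mu+1}$ are closer to $\proj_R a$; replacing the subgallery by a minimal $\{s,t\}$-gallery from $c_{\mu-1}$ to $c_{\mu+1}$ within $R$ (an elementary homotopy of type $J = \{s,t\}$, legal since $|J| \le 2$) passes through chambers no farther from $a$ than $\ell_\mu$, and one checks the count of ``peaks'' at level $\ell_\mu$ strictly decreases. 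Iterating, $G$ becomes homotopic to a gallery with a single peak — monotone up to some chamber $c_m$ and monotone back down — with endpoints $a$; but then the ``up'' part and the ``down'' part are both minimal galleries from $a$ to $c_m$, and a standard argument (again via projections onto residues, or by observing both halves can be taken to coincide after homotopy) contracts the whole gallery to $(a)$.

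The main obstacle is the bookkeeping in the $s \neq t$ case: one must verify that replacing the subgallery by a geodesic in the $\{s,t\}$-residue genuinely reduces the chosen complexity measure and never introduces a chamber farther from $a$ than the peak being removed. This rests on the key geometric fact that for a residue $R$ and any chamber, $\delta(x,y) = \delta(x, \proj_R x)\,\delta(\proj_R x, y)$ for $y \in R$ (cited above from \cite[Proposition~5.34]{AB08}), which controls how $\ell(\delta(a, \cdot))$ behaves across $R$; the $W$-valued distance turns what would be delicate metric estimates in a general chamber system into exact identities. Once the monotone-gallery normal form is reached, the final contraction is routine.
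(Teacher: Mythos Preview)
The paper does not prove this proposition; it simply cites \cite[(4.3) Theorem]{Ro89}. Your proposal therefore attempts more than the paper does, and the peak-reduction idea is a standard route to a direct argument.

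There is, however, a genuine gap in the $s \neq t$ step, and it is not merely bookkeeping. With $c_{\mu-1} \sim_s c_\mu \sim_t c_{\mu+1}$, $s\neq t$, and $c_{\mu-1}\neq c_\mu\neq c_{\mu+1}$, axiom (Bu2) forces $\delta(c_{\mu-1},c_{\mu+1})=st$. When $m_{st}\geq 3$ the only reduced expression for $st$ is $s\cdot t$, so the \emph{unique} minimal gallery from $c_{\mu-1}$ to $c_{\mu+1}$ in the $\{s,t\}$-residue $R$ is $(c_{\mu-1},c_\mu,c_{\mu+1})$ itself; your ``minimal $\{s,t\}$-gallery'' replacement is the identity homotopy, no peak is removed, and the induction stalls. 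The remedy is to abandon minimality inside $R$ and route through the gate $p:=\proj_R a$: replace $(c_{\mu-1},c_\mu,c_{\mu+1})$ by a minimal $R$-gallery from $c_{\mu-1}$ down to $p$ concatenated with one from $p$ up to $c_{\mu+1}$. This is still a single $\{s,t\}$-gallery (hence one elementary homotopy), and by the gate formula every interior chamber sits at level at most $\ell_\mu-1$. The new gallery is longer, so your induction must be lexicographic on $\bigl(\max_\mu \ell_\mu,\ \#\{\mu:\ell_\mu\text{ is maximal}\}\bigr)$ and cannot involve total length. A similar routing-through-$p$ move is also needed for plateaus at the top level: your sketch only treats strict peaks $\ell_{\mu-1}<\ell_\mu>\ell_{\mu+1}$, but a closed gallery may attain its maximum on a run of consecutive indices with no strict local maximum. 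With these corrections the argument closes.
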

\begin{proof}
	This is \cite[$(4.3)$ Theorem]{Ro89}.
\end{proof}

\section{Twin buildings}\label{Sectiontwinbuildings}

\subsection*{Definitions and Notations}

Let $\Delta_+ = (\C_+, \delta_+), \Delta_- = (\C_-, \delta_-)$ be two buildings of the same type $(W, S)$. A \textit{codistance} (or a \textit{twinning}) between $\Delta_+$ and $\Delta_-$ is a mapping $\delta_* : \left( \C_+ \times \C_- \right) \cup \left( \C_- \times \C_+ \right) \to W$ satisfying the following axioms, where $\epsilon \in \{+,-\}, x\in \Cp, y\in \Cm$ and $w=\delta_*(x, y)$:
\begin{enumerate}[label=(Tw\arabic*), leftmargin=*]
	\item $\delta_*(y, x) = w^{-1}$;
	
	\item if $z\in \Cm$ is such that $s := \delta_{-\epsilon}(y, z) \in S$ and $\ell(ws) = \ell(w) -1$, then $\delta_*(x, z) = ws$;
	
	\item if $s\in S$, there exists $z\in \Cm$ such that $\delta_{-\epsilon}(y, z) = s$ and $\delta_*(x, z) = ws$.
\end{enumerate}
A \textit{twin building of type $(W, S)$} is a triple $\Delta = (\Delta_+, \Delta_-, \delta_*)$ where $\Delta_+, \Delta_-$ are buildings of type $(W, S)$ and where $\delta_*$ is a twinning between $\Delta_+$ and $\Delta_-$.

\begin{convention}
	For the rest of this paper let $\Delta = (\Delta_+, \Delta_-, \delta_*)$ be a twin building of type $(W, S)$ where $\Delta_+ = (\C_+, \delta_+)$ and $\Delta_- = (\C_-, \delta_-)$.
\end{convention}

We put $\C := \C_+ \cup \C_-$ and define the distance function $\delta: \C \times \C \to W$ by setting $\delta(x, y) := \delta_+(x, y)$ (resp. $\delta_-(x, y), \delta_*(x, y)$) if $x,y\in \C_+$ (resp. $x, y \in \C_-, (x, y) \in \Cp \times \Cm$ for some $\epsilon \in \{+,-\}$).

Given $x, y \in \C$ then we put $\ell(x, y) := \ell(\delta(x, y))$. If $\epsilon \in \{+,-\}$ and $x, y \in \Cp$, then we put $\ell_{\epsilon}(x, y) := \ell(\delta_{\epsilon}(x, y))$ and for $(x, y) \in \Cp \times \Cm$ we put $\ell_*(x, y) := \ell(\delta_*(x, y))$.

Let $\epsilon \in \{+,-\}$. For $x\in \Cp$ we put $x^{op} := \{ y\in \Cm \mid \delta_*(x, y) = 1_W \}$. It is a direct consequence of (Tw$1$) that $y\in x^{op}$ if and only if $x\in y^{op}$ for any pair $(x, y) \in \Cp \times \Cm$. If $y\in x^{op}$ then we say that $y$ is \textit{opposite} to $x$ or that \textit{$(x, y)$ is a pair of opposite chambers}.

Let $\overline{\C} := \{ (c_+, c_-) \in \C_+ \times \C_- \mid \delta_*(c_+, c_-) = 1_W \}$. Then $\left( \overline{\C}, (\sim_s)_{s\in S} \right)$ is a chamber system, where $(c_+, c_-) \in \overline{\C}$ is $s$-adjacent $(s\in S)$ to $(d_+, d_-) \in \overline{\C}$ if $c_{\epsilon}$ is $s$-adjacent to $ d_{\epsilon}$ in $\Cbf(\Delta_{\epsilon})$ for each $\epsilon \in \{+,-\}$. We denote this chamber system by $\Opp(\Delta)$. For $\overline{c} := (c_+, c_-) \in \overline{\C}$ we define $E_2(\overline{c}) := E_2(c_+) \cup E_2(c_-)$.

A \textit{residue} (resp. \textit{panel}) of $\Delta$ is a residue (resp. panel) of $\Delta_+$ or $\Delta_-$; given a residue $R\subseteq \C$ then we define its type and rank as before. Two residues $R,T \subseteq \C$ are called \textit{opposite} if they have the same type and if there exists a pair of opposite chambers $(x, y)$ such that $x\in R, y\in T$.

Let $\epsilon \in \{+,-\}$, let $J$ be a spherical subset of $S$ and let $R$ be a $J$-residue of $\Delta_{\epsilon}$. Given a chamber $x\in \Cm$ then there exists a unique chamber $z\in R$ such that $\ell_*(x, y) = \ell_*(x, z) - \ell_{\epsilon}(z, y)$ for any chamber $y\in R$ (cf. \cite[Lemma $5.149$]{AB08}). The chamber $z$ is called the \textit{projection of $x$ onto $R$}; it will be denoted by $\proj_R x$. Moreover, if $z = \proj_R x$ we have $\delta_*(x, y) = \delta_*(x, z)\delta_{\epsilon}(z, y)$ for each $y\in R$.

Let $\Sigma_+ \subseteq \C_+$ and $\Sigma_- \subseteq \C_-$ be apartments of $\Delta_+$ and $\Delta_-$, respectively. Then the set $\Sigma = \Sigma_+ \cup \Sigma_-$ is called \textit{twin apartment} if $\vert x^{op} \cap \Sigma \vert = 1$ for each $x\in \Sigma$. If $(x, y)$ is a pair of opposite chambers, then there exists a unique twin apartment containing $x$ and $y$. We will denote it by $A(x, y)$ and for $\epsilon \in \{+,-\}$ we put $A_{\epsilon}(x, y) := A(x, y) \cap \Cp$. It is a fact that $A(x, y) = \{ z\in \C \mid \delta(z, x) = \delta(z, y) \}$ (cf. Proposition $5.179$ in \cite{AB08}).

\begin{lemma}\label{Mu97Lemma3.2}
	Let $\Sigma \subseteq \C$ be a twin apartment, let $x \in \Sigma$ and let $R$ be a spherical residue of $\Delta$ which meets $\Sigma$. Then $\proj_R x \in \Sigma$.
\end{lemma}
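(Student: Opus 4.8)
The plan is to distinguish two cases according to whether the residue $R$ lies in the same half of $\Delta$ as the chamber $x$ or in the opposite half, and to reduce the second (more delicate) case to the first by exploiting the opposition relation inside $\Sigma$. For the set-up, write $\Sigma = \Sigma_+ \cup \Sigma_-$ and say $x \in \Sigma_\epsilon$. Since $\Sigma$ is a twin apartment there is a unique chamber $x'$ of $\Sigma$ opposite $x$; it lies in $\Sigma_{-\epsilon}$, the pair $(x,x')$ is opposite, and by uniqueness of the twin apartment through a pair of opposite chambers one has $\Sigma = A(x,x') = \{ z \in \C \mid \delta(z,x) = \delta(z,x') \}$. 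As $R$ is a residue of $\Delta$ it is contained in $\C_\eta$ for some $\eta \in \{+,-\}$, and since $R$ meets $\Sigma$ it meets $\Sigma_\eta$; I would fix a chamber $d \in R \cap \Sigma_\eta$.

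If $\eta = \epsilon$, then $\proj_R x$ is the ordinary projection in $\Delta_\epsilon$, and the defining property of the projection gives $\delta_\epsilon(x,d) = \delta_\epsilon(x,\proj_R x)\,\delta_\epsilon(\proj_R x,d)$ with the lengths adding up, so $\proj_R x$ lies on a minimal gallery joining the two chambers $x$ and $d$ of the apartment $\Sigma_\epsilon$. I would then invoke the standard fact that an apartment, being convex, contains every minimal gallery between two of its chambers (the next-to-last chamber of such a gallery is the projection of $x$ onto the panel through it and $d$, hence lies in the convex set $\Sigma_\epsilon$, and a downward induction on the length of the gallery concludes). This yields $\proj_R x \in \Sigma_\epsilon \subseteq \Sigma$.

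If $\eta = -\epsilon$, then $z := \proj_R x$ is the projection onto $R$ defined via the codistance $\delta_*$, which exists because $R$ is spherical. The crucial step is to bring in $z' := \proj_R x'$, the \emph{ordinary} projection of the $\Sigma$-opposite chamber $x'$ onto $R$ inside $\Delta_{-\epsilon}$. By the first case, applied to $x' \in \Sigma_{-\epsilon}$ and $R \subseteq \C_{-\epsilon}$, one gets $z' \in \Sigma_{-\epsilon} \subseteq A(x,x')$, which — using \textup{(Tw1)} and the symmetry of the Weyl distance — means $\delta_*(x,z') = \delta_{-\epsilon}(x',z')$. Since $z = \proj_R x$, the identity $\delta_*(x,y) = \delta_*(x,z)\,\delta_{-\epsilon}(z,y)$ applied to $y = z'$ reads $\delta_*(x,z') = \delta_*(x,z)\,\delta_{-\epsilon}(z,z')$, i.e.\ $\delta_*(x,z) = \delta_*(x,z')\,\delta_{-\epsilon}(z',z)$; and since $z' = \proj_R x'$, the identity $\delta_{-\epsilon}(x',y) = \delta_{-\epsilon}(x',z')\,\delta_{-\epsilon}(z',y)$ applied to $y = z$ reads $\delta_{-\epsilon}(x',z) = \delta_{-\epsilon}(x',z')\,\delta_{-\epsilon}(z',z)$. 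Combining these with $\delta_*(x,z') = \delta_{-\epsilon}(x',z')$ gives $\delta_*(x,z) = \delta_{-\epsilon}(x',z)$, equivalently $\delta(z,x) = \delta(z,x')$, and therefore $z \in A(x,x') = \Sigma$.

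I expect the main difficulty to be conceptual rather than computational: one has to realise, in the second case, that the right auxiliary chamber is the ordinary projection of the $\Sigma$-opposite chamber $x'$, and that the projection identity for the codistance projection together with the projection identity for the building projection combine precisely into the membership criterion for the twin apartment $A(x,x')$. The remaining ingredients — the convexity of apartments used in the first case, and keeping track of the signs and of the left/right placement of the Weyl-group elements — are routine.
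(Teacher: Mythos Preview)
Your argument is correct. The paper does not actually prove this lemma; it simply cites \cite[Lemma~5.173~(6)]{AB08}. Your two-case split, reducing the codistance case to the ordinary building case via the $\Sigma$-opposite chamber $x'$ and the characterisation $A(x,x') = \{z \in \C \mid \delta(z,x) = \delta(z,x')\}$, is exactly the natural self-contained route, and the algebra combining the two projection identities with $\delta_*(x,z') = \delta_{-\epsilon}(x',z')$ is carried out cleanly. So you have supplied a genuine proof where the paper only gives a pointer; the trade-off is that the paper keeps the exposition short by outsourcing a standard fact, while your version makes the paper more self-contained at the cost of a paragraph.
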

\begin{proof}
	This is \cite[Lemma $5.173$ $(6)$]{AB08}.
\end{proof}

\subsection*{Pairs of opposite spherical residues}

Throughout this subsection we assume that $R \subseteq \C_+, T \subseteq \C_-$ are opposite residues and that the type $J$ of $R$ and $T$ is spherical.

\begin{lemma}\label{Mu97Lemma3.3}
	For each $x\in R$ there exists $y\in T$ such that $x$ and $y$ are opposite and we have $\delta_*(u, v) \in \langle J \rangle$ for all $(u, v) \in R \times T$.
\end{lemma}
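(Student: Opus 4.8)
The plan is to exploit the two characterisations of opposition and projection that are already available: the description of a twin apartment as $A(x,y)=\{z\in\C\mid \delta(z,x)=\delta(z,y)\}$, and Lemma~\ref{Mu97Lemma3.2} on projections into a spherical residue meeting a twin apartment. First I would fix $x\in R$. Since $R$ and $T$ are opposite, by definition there is \emph{some} pair of opposite chambers $(u_0,v_0)$ with $u_0\in R$, $v_0\in T$; let $\Sigma=A(u_0,v_0)$ be the twin apartment they span. Both $R$ and $T$ meet $\Sigma$, and both are spherical residues, so I can form $y:=\proj_T x'$ where $x':=\proj_{R}$ of an appropriate chamber — more directly, I would first move from $u_0$ to $x$ inside $R$ and track what happens to the opposite chamber.

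The cleanest route is: set $y:=\proj_T x$. Because $T$ is spherical and meets $\Sigma$, and because $x\in R\subseteq$ something meeting $\Sigma$ — wait, $x$ need not lie in $\Sigma$ — I instead argue as follows. Let $z$ be the unique chamber of $R$ opposite to $v_0$ (namely $z=u_0$), and use the projection formula for twinnings: for the chamber $x\in R$ we have $\delta_*(x,v)=\delta_*(x,\proj_T x)\,\delta_-(\proj_T x,v)$ for all $v\in T$, and by the dual statement $\delta_*(\proj_T x,x)\cdot\delta_+(\cdot)$ controls $\ell_*$ from the other side. Put $y:=\proj_T x$. The key claim is $\delta_*(x,y)=1_W$. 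To see this, note $w:=\delta_*(x,y)$ satisfies $\ell_*(x,v)=\ell_*(x,y)-\ell_-(y,v)$ for all $v\in T$, so $\ell_*(x,y)$ is the maximum of $\ell_*(x,v)$ over $v\in T$; since $R$ and $T$ are opposite this maximum equals the diameter contribution that forces $w\in\langle J\rangle$. Then applying the projection formula in the $R$-direction to the chamber $y$ and using that there \emph{exists} an opposite pair in $R\times T$ pins $w$ down to be the identity. Concretely: the set of $v\in T$ with $v$ opposite to some chamber of $R$ is nonempty, and by transitivity of the relevant group action on apartments (or directly by the twin-apartment description) one shows $\delta_*(u,v)\in\langle J\rangle$ for \emph{all} $(u,v)\in R\times T$, which is exactly the second assertion; and once $\delta_*(\cdot,\cdot)$ always lands in $\langle J\rangle$, the pair $(R,T)$ of opposite spherical residues is itself a twin building of type $(\langle J\rangle,J)$, in which every chamber has an opposite (spherical buildings have opposites for every chamber), giving the first assertion.

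So the two parts feed each other, and the right order is to prove the ``$\delta_*(u,v)\in\langle J\rangle$ for all $(u,v)$'' statement first. For that I would take any $v\in T$ and consider $u:=\proj_R v$; the projection formula gives $\delta_*(u',v)=\delta_*(u',u)^{-1}$-type relations showing $\ell_*(u,v)\le \ell(r_J)$ with equality controlled inside $R$, and the twin-building axioms (Tw2) applied along a minimal gallery in $R$ from $u$ to any $u'$ keep $\delta_*(u',v)$ inside $\langle J\rangle$ because each step changes the codistance by right-multiplication by a generator $s\in J$. Since $R$ is a spherical building and $u^{op}$ within the pair is then seen to be non-empty (project the chamber of $T$ opposite to the given chamber of $R$ back and forth, using sphericity to guarantee existence), one obtains both conclusions.

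\emph{Main obstacle.} The delicate point is verifying that projections behave coherently from \emph{both} sides simultaneously: one must show that $\proj_T\proj_R$ and $\proj_R\proj_T$ act as mutually inverse bijections between $R$ and $T$ and that they send opposite chambers to opposite chambers, so that the single assumed opposite pair propagates to \emph{every} chamber of $R$. This requires combining the twin-building projection formula (stated before Lemma~\ref{Mu97Lemma3.2}) with the spherical-building fact that within $R$ (a building of type $(\langle J\rangle,J)$) the map $\proj_R(v)$ varies ``surjectively'' as $v$ ranges over $T$; keeping careful track of which length equations hold with a plus sign and which with a minus sign is where the real work lies, though it is ultimately a bookkeeping argument rather than a conceptual hurdle, and it is presumably for this reason that the authors isolate it as a lemma.
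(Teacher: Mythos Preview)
The paper dispatches this lemma in one line by citing \cite[Lemma 5.139(1)]{AB08}, so there is no in-house argument to compare against. Your proposal does contain a workable idea, but it is tangled with a genuine error and a misidentified obstacle.

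\textbf{The error.} You set $y:=\proj_T x$ and claim $\delta_*(x,y)=1_W$. This is backwards. By definition of the twin projection, $\proj_T x$ is the chamber of $T$ at \emph{maximal} codistance from $x$; in fact $\delta_*(x,\proj_T x)=r_J$, exactly as the next lemma (Lemma~\ref{Mu97Lemma3.4}) establishes. A chamber opposite to $x$ sits at codistance $1_W$, i.e.\ at \emph{minimal} $\ell_*$, and the projection does not hand you that.

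\textbf{What does work.} Your later plan is sound: first show $\delta_*(u,v)\in\langle J\rangle$ for all $(u,v)\in R\times T$, then use this to produce an opposite. For the first part, take the guaranteed opposite pair $(u_0,v_0)\in R\times T$, walk along a gallery in $T$ from $v_0$ to any $v$; at each step $\delta_*(u_0,\cdot)$ is either unchanged or right-multiplied by some $s\in J$, so $\delta_*(u_0,v)\in\langle J\rangle$. Repeat with a gallery in $R$ from $u_0$ to any $u$. For the second part, given $x\in R$ with $w:=\delta_*(x,v_0)\in\langle J\rangle$, apply (Tw3) successively along a reduced word for $w$ with letters in $J$: each step stays inside $T$ and lowers $\ell_*$ by one, terminating at some $y\in T$ with $\delta_*(x,y)=1_W$. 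No projections, no apartments, no group actions are needed.

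\textbf{The misidentified obstacle.} What you flag as the ``main obstacle''---mutual bijectivity of $\proj_R^T$ and $\proj_T^R$---is Lemma~\ref{Mu97Lemma3.5}, which the paper proves \emph{after} the present lemma (and after Lemma~\ref{Mu97Lemma3.4}). It is not required here, and invoking it would invert the logical order of the section.
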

\begin{proof}
	These are immediate consequences of \cite[Lemma $5.139$ $(1)$]{AB08}.
\end{proof}

\begin{lemma}\label{Mu97Lemma3.4}
	Let $(x, y) \in R \times T$. Then the following are equivalent:
	\begin{enumerate}[label=(\roman*), leftmargin=*]
		\item $\proj_T x = y$;
		\item $\delta_*(x, y) = r_J$;
		\item $\proj_R y = x$.
	\end{enumerate}
\end{lemma}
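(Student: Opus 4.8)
The plan is to reduce all three equivalences to the single statement that for every $x\in R$ one has $\delta_*(x,\proj_T x)=r_J$, i.e. that the projection of $x$ onto the opposite residue $T$ realises the largest possible codistance. Granting this, the rest is purely formal. Before starting I would record that $\proj_T x$ is defined because $T$ is a spherical residue of $\Delta_-$ and $x\in\C_+$, and that $\delta_*(x,y)\in\langle J\rangle$ for all $(x,y)\in R\times T$ by Lemma~\ref{Mu97Lemma3.3}.

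To prove the key statement I would set $z:=\proj_T x$ and invoke the projection property for spherical residues in a twin building recalled just before Lemma~\ref{Mu97Lemma3.2} (applied with $\epsilon=-$): for every $y'\in T$ we have $\delta_*(x,y')=\delta_*(x,z)\,\delta_-(z,y')$ together with $\ell_*(x,y')=\ell_*(x,z)-\ell_-(z,y')$. Following a reduced word for $r_J$ in letters of $J$ and applying (Bu3) one step at a time, I obtain a chamber $y'\in R_J(z)=T$ with $\delta_-(z,y')=r_J$; feeding this into the two identities gives $\ell\!\left(\delta_*(x,z)\,r_J\right)=\ell(\delta_*(x,z))-\ell(r_J)$. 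Since $\delta_*(x,z)\in\langle J\rangle$ and $r_J$ is the longest element of $\langle J\rangle$, the right-hand side is $\le 0$, forcing $\delta_*(x,z)\,r_J=1_W$, hence $\delta_*(x,z)=r_J$ because $r_J$ is an involution. This proves the key statement, and in particular $(i)\Rightarrow(ii)$. For $(ii)\Rightarrow(i)$ I would assume $\delta_*(x,y)=r_J$, put $z:=\proj_T x$, and use the key statement together with the projection identities to get $r_J=\delta_*(x,z)\,\delta_-(z,y)=r_J\,\delta_-(z,y)$, whence $\delta_-(z,y)=1_W$ and $y=z=\proj_T x$. Thus $(i)\Leftrightarrow(ii)$.

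Finally, $(iii)$ follows by symmetry: exchanging the roles of $(\Delta_+,R,x)$ and $(\Delta_-,T,y)$, the equivalence just established becomes $(iii)\Leftrightarrow\bigl(\delta_*(y,x)=r_J\bigr)$, and by (Tw1) we have $\delta_*(y,x)=\delta_*(x,y)^{-1}$, which equals $r_J$ exactly when $\delta_*(x,y)=r_J^{-1}=r_J$. Hence $(iii)\Leftrightarrow(ii)$, and all three statements are equivalent. The one genuinely non-formal step is the key statement $\delta_*(x,\proj_T x)=r_J$: the work there lies in exhibiting a chamber of $T$ at Weyl distance $r_J$ from $\proj_T x$ and combining it with the twin-building projection formula; after that, the involution property of $r_J$ and the maximality of $\ell(r_J)$ in $\langle J\rangle$ finish it, and everything else is bookkeeping with (Tw1).
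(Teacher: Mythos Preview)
Your proof is correct and follows essentially the same route as the paper's: both arguments pick a chamber of $T$ at $\delta_-$-distance $r_J$ from the projection, feed it into the twin projection length formula $\ell_*(x,y')=\ell_*(x,z)-\ell_-(z,y')$, and use maximality of $\ell(r_J)$ in $\langle J\rangle$ together with $\delta_*(x,z)\in\langle J\rangle$ from Lemma~\ref{Mu97Lemma3.3} to force $\delta_*(x,z)=r_J$; the converse and the symmetry step are handled identically. Your organisation is marginally cleaner in that you isolate the statement $\delta_*(x,\proj_T x)=r_J$ once and reuse it for $(ii)\Rightarrow(i)$ via the multiplicative projection identity, whereas the paper effectively reproves it inside that step, but this is a cosmetic difference rather than a genuinely distinct approach.
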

\begin{proof}
	Suppose $y= \proj_T x$ and let $z\in T$ be such that $\delta_-(y, z) = r_J$. Then $\ell_*(x, z) = \ell_*(x, y) - \ell(r_J)$ and hence $\ell_*(x, y) \geq \ell(r_J)$. As $\delta_*(x, y) \in \langle J \rangle$ by the previous Lemma, the claim follows.

	Suppose now that $\delta_*(x, y) = r_J$ and let $z := \proj_T x$. Since $\ell_*(x, z) \geq \ell_*(x, y) = \ell(r_J)$ and $\delta_*(x, z) \in \langle J \rangle$, it follows that $\delta_*(x, z) = r_J$. Now $\ell(r_J) = \ell_*(x, y) = \ell_*(x, z) - \ell_-(z, y) = \ell(r_J) - \ell_-(z, y)$ which implies $z=y$.

	We have shown that $(i)$ and $(ii)$ are equivalent; the equivalence of $(ii)$ and $(iii)$ follows by symmetry and we are done.
\end{proof}

\begin{lemma}\label{Mu97Lemma3.5}
	The mappings $\proj_R^T: T \to R, x \mapsto \proj_R x$ and $\proj_T^R: R \to T, x \mapsto \proj_T x$ are bijections inverse to each other.
\end{lemma}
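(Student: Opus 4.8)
The plan is to show that $\proj_T^R$ and $\proj_R^T$ are mutually inverse, which immediately forces both to be bijections. Fix $x \in R$. I would first produce a chamber opposite to $x$ inside $T$: by Lemma~\ref{Mu97Lemma3.3} there is some $y \in T$ with $\delta_*(x,y) = 1_W$. The key point is to identify $\proj_T x$ in terms of such a $y$. Since $\delta_*(x,y) = 1_W$ has minimal possible length, the defining property of the projection (the chamber $z = \proj_T x$ satisfies $\ell_*(x,v) = \ell_*(x,z) - \ell_-(z,v)$ for all $v \in T$, with $\delta_*(x,v) = \delta_*(x,z)\delta_-(z,v)$) should let me pin down $\delta_*(x, \proj_T x)$. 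Concretely, take $v \in T$ with $\delta_-(\proj_T x, v)$ equal to $r_J$ (possible since $T$ is a building of type $(\langle J\rangle, J)$ and $J$ is spherical); then $\ell_*(x,v) = \ell_*(x,\proj_T x) - \ell(r_J) \geq 0$, so $\ell_*(x,\proj_T x) \geq \ell(r_J)$, while $\delta_*(x,\proj_T x) \in \langle J\rangle$ by Lemma~\ref{Mu97Lemma3.3} forces $\delta_*(x,\proj_T x) = r_J$. So for every $x \in R$ we have $\delta_*(x, \proj_T^R x) = r_J$, and symmetrically $\delta_*(\proj_R^T u, u) = r_J$, equivalently $\delta_*(u, \proj_R^T u) = r_J^{-1} = r_J$ for every $u \in T$ (using that $r_J$ is an involution and (Tw1)).

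Now combine this with Lemma~\ref{Mu97Lemma3.4}. Set $y := \proj_T^R x \in T$, so $\delta_*(x,y) = r_J$. By the equivalence of (ii) and (iii) in Lemma~\ref{Mu97Lemma3.4} this gives $\proj_R y = x$, i.e. $\proj_R^T(\proj_T^R x) = x$. Since $x \in R$ was arbitrary, $\proj_R^T \circ \proj_T^R = \mathrm{id}_R$. The symmetric argument — starting from $u \in T$, putting $v := \proj_R^T u \in R$, noting $\delta_*(v,u) = \delta_*(u,v)^{-1} = r_J$ by (Tw1), and applying the equivalence of (i) and (ii) of Lemma~\ref{Mu97Lemma3.4} — yields $\proj_T^R \circ \proj_R^T = \mathrm{id}_T$. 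Two maps that are mutually inverse are automatically bijections, so this completes the proof.

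I expect the only genuine content to be the identification $\delta_*(x, \proj_T^R x) = r_J$; everything after that is a formal consequence of Lemma~\ref{Mu97Lemma3.4}. Actually, that identification is itself essentially the implication ``(i) $\Rightarrow$ (ii)'' already proved inside Lemma~\ref{Mu97Lemma3.4} (the first paragraph of its proof shows precisely that $y = \proj_T x$ implies $\delta_*(x,y) = r_J$), so strictly speaking the whole lemma reduces to quoting Lemma~\ref{Mu97Lemma3.4} twice together with Lemma~\ref{Mu97Lemma3.3} to guarantee $\proj_T^R$ and $\proj_R^T$ are well-defined on all of $R$ and $T$. The one mild subtlety to be careful about is the sign bookkeeping with (Tw1) when passing between $\delta_*(x,y)$ and $\delta_*(y,x)$, using that $r_J$ is an involution so that $r_J^{-1} = r_J$; no serious obstacle arises.
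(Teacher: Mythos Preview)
Your argument is correct. The paper, however, does not prove this lemma at all: it simply cites Proposition~(4.3) of \cite{Ro00}. Your route is a self-contained derivation from the paper's own Lemma~\ref{Mu97Lemma3.4}: the equivalence $(i)\Leftrightarrow(iii)$ there says precisely that $\proj_T x = y$ if and only if $\proj_R y = x$, which is exactly the statement that $\proj_T^R$ and $\proj_R^T$ are mutual inverses. As you yourself observe at the end, the first half of your write-up (re-deriving $\delta_*(x,\proj_T^R x)=r_J$) duplicates the proof of $(i)\Rightarrow(ii)$ already carried out inside Lemma~\ref{Mu97Lemma3.4}, so the clean version of your proof is a one-liner invoking that lemma. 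What your approach buys is that the result becomes internal to the paper rather than dependent on an outside reference; what the paper's approach buys is brevity.
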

\begin{proof}
	This is Proposition $(4.3)$ in \cite{Ro00}.
\end{proof}

\subsection*{A technical result}

In this paragraph we prove a technical result which will be needed in the proof of Theorem \ref{Maintheorem}.

\begin{lemma}
	Let $c \in \C_{-\epsilon}, x \in \C_{\epsilon}$ be two opposite chambers
	and let $(x = d_0, d_1, \ldots, d_k, d_{k+1} = d)$ be a gallery such that $\ell_*(c,d_i) = i$ for
	each $0 \leq i \leq k$ and $\ell_*(c,d) \leq k$. Then there exist chambers $x',z \in \C_{\epsilon}$
	such that $x' \in c^{op}$, $\delta_*(c,z) = \delta(x,z) = \delta(x',z)$ and $\ell(x',d) < k+1$.
\end{lemma}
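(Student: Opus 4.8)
The plan is to analyze the gallery $(x = d_0, d_1, \ldots, d_k, d_{k+1} = d)$ near its endpoint and exploit the ``turn'' that must occur because $\ell_*(c,d) \leq k$ while $\ell_*(c,d_k) = k$. First I would observe that by (Tw2) and (Tw3) applied along the gallery, the step from $d_{k-1}$ to $d_k$ is length-increasing with respect to $c$ (since $\ell_*(c,d_{k-1}) = k-1$ and $\ell_*(c,d_k) = k$), while the step from $d_k$ to $d = d_{k+1}$ is \emph{not} length-increasing: if $s := \delta_\epsilon(d_k, d)$, then $\ell_*(c,d) \in \{k-1, k\}$ but in any case $\ell_*(c,d) \le k$, and more precisely $\ell(\delta_*(c,d_k)s) = k-1$ so $\delta_*(c,d) = \delta_*(c,d_k)s$ by (Tw2). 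Let $w := \delta_*(c, d_{k-1})$, so $\ell(w) = k-1$; then $\delta_*(c,d_k) = wt$ for some $t \in S$ with $\ell(wt) = k$, where $t = \delta_\epsilon(d_{k-1}, d_k)$ (using that the $+$-building step and the codistance step agree up to the standard relations, via (Tw2) applied in the appropriate direction).

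Next I would locate a chamber that plays the role of $z$. The natural candidate is to set $z := d_{k-1}$, or more carefully a projection: since $\delta_*(c, d_k)$ has length $k$ and $\delta_*(c,d) = \delta_*(c,d_k)s$ has length $\le k$, the element $\delta_*(c,d_k)$ is not $\{s,t\}$-reduced on the right in the relevant sense, so inside the $\{s,t\}$-residue (or a suitable rank-$1$ panel) containing $d_k$ one can find the projection of $c$. Concretely, I would let $P$ be the $s$-panel of $\Delta_\epsilon$ through $d_k$ (so $d \in P$) and put $z := \proj_P c \in P$; then $\delta_*(c, z) = \delta_*(c, d_k)s = \delta_*(c,d)$ has length $k-1 < k$, and for every $y \in P$ we have $\delta_*(c,y) = \delta_*(c,z)\,\delta_\epsilon(z,y)$. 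In particular $\delta_*(c,d_k) = \delta_*(c,z)\,\delta_\epsilon(z,d_k)$ and $\delta_*(c,x) = 1_W$, so the ``distance from $c$'' to chambers of $P$ is controlled by a single group element $\delta_*(c,z)$ of length $k-1$.

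The heart of the argument is then to produce $x' \in c^{op}$ with $\delta(x,z) = \delta(x',z) = \delta_*(c,z)$ and $\ell(x',d) < k+1$. For this I would work in the twin apartment $A(x,c)$ and use the projection/retraction machinery: since $\ell_*(c,x)=0$ and we want $\delta(x,z)=\delta_*(c,z)$, the chamber $z$ should be taken to lie ``on the $c$-side'', i.e. one wants $z$ with $\delta_+(x,z) = \delta_*(c,z)$, which by $A(x,c) = \{u \mid \delta(u,x)=\delta(u,c)\}$ is exactly the condition that $z$ lies in (the convex hull relevant to) this twin apartment, or at least that $\proj$ onto the panel behaves coherently; I would choose $z = \proj_P c$ as above and check $\delta_+(x,z) = \delta_*(c,z)$ directly from the gallery, since $x = d_0$ is joined to $d_k$ by a minimal gallery (lengths $\ell_*(c,d_i)=i$ force the gallery $d_0,\dots,d_k$ to be minimal from $x$ too, as $\ell_+(x,d_k) \le k = \ell_*(c,d_k) = \ell_*(c,x) + \ell_+(x, d_k)$ can't exceed... ) — one gets $\ell_+(x,d_k) = k$ and hence $\delta_+(x,d_k)$ determined, whence $\delta_+(x,z) = \delta_+(x,d_k)s = \delta_*(c,z)$. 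Now $x' := \proj_{P_x} c$ where... actually the cleanest route: set $x' := \proj_{R} c$ for the panel $R$ of $\Delta_\epsilon$ through $x$ that corresponds — no; instead take $x'$ to be the chamber of the twin apartment $A(x,c)$ with $\delta_*(c,x')=1_W$ after a ``fold'', obtained by projecting. I would define $x' := \proj_{P'} \bar{d}$ for a suitable panel, arranging $x' \in c^{op}$ via Lemma~\ref{Mu97Lemma3.4}, and then compute $\ell(x',d)$: because $x'$ and $d$ both lie past $z$ in compatible directions and $\delta_\epsilon(x',z)$ versus $\delta_\epsilon(z,d) = s$ interact to shorten the distance, one gets $\ell(x',d) \le \ell(x',z) + 1 = (k-1)+1 = k < k+1$, or with cancellation strictly less.

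\textbf{Main obstacle.} The delicate point is simultaneously arranging the three conditions $x' \in c^{op}$, the triple equality $\delta_*(c,z) = \delta(x,z) = \delta(x',z)$, and the strict inequality $\ell(x',d) < k+1$: the first two pin down $x'$ and $z$ rather rigidly relative to the twin apartment $A(x,c)$, and one must verify that this forced choice of $x'$ actually sits closer to $d$ than $x$ does. I expect this to hinge on showing that the minimal gallery $x = d_0, \ldots, d_k$ together with the non-increasing last step to $d$ means $d$ lies ``on the $x'$-side'' of the fold, which should follow from Lemma~\ref{Mu97Lemma3.2} (projections of apartment chambers onto spherical residues stay in the apartment) applied to the $s$-panel $P$, combined with (Tw2); the bookkeeping of which Weyl-group elements are reduced is where the real work lies, but no single step should be deep.
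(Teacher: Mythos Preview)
Your proposal has two genuine problems. First, the assertion ``$\ell(\delta_*(c,d_k)s) = k-1$'' is unjustified and can fail: writing $w := \delta_*(c,d_k) = \delta_\epsilon(x,d_k)$ (which does have length $k$), one may equally well have $\ell(ws) = k+1$. The hypothesis $\ell_*(c,d) \le k$ then only says $d \neq \proj_P c$, and in fact $\delta_*(c,d) = w$ (not $ws$) in that case. The paper's proof splits exactly on whether $\ell(ws) = k-1$ or $\ell(ws) = k+1$, and you have collapsed the two cases into the first. Relatedly, your claim that $\delta_*(c,\proj_P c)$ has length $k-1$ is backwards: the projection of a chamber from the opposite half onto a panel \emph{maximises} codistance, so $\ell_*(c,\proj_P c)$ equals $k$ in the first case (where $\proj_P c = d_k$) and $k+1$ in the second.

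Second, and more seriously, your construction of $x'$ never lands. You reach for projections and ``folds'' inside the twin apartment $A(x,c)$, but the only chamber of $A_\epsilon(x,c)$ opposite $c$ is $x$ itself, so no such manoeuvre can produce a genuinely new $x'$; the attempts via $\proj_R c$ or Lemma~\ref{Mu97Lemma3.4} do not pin anything down. The paper's idea is much simpler and uses no apartment machinery at all: once $\delta_*(c,d)$ is known (namely $ws$ in the first case, $w$ in the second), one invokes the building axioms to choose \emph{any} chamber $x' \in \C_\epsilon$ with $\delta_\epsilon(x',d) = \delta_*(c,d)$. Then $x' \in c^{op}$ follows by running (Tw2) along a minimal gallery from $d$ to $x'$, the bound $\ell_\epsilon(x',d) = \ell_*(c,d) \le k < k+1$ is immediate, and the equality $\delta_\epsilon(x',z) = \delta_*(c,z) = \delta_\epsilon(x,z)$ is a one-line check with $z := \proj_P c$ (equal to $d_k$ in the first case). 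No Lemma~\ref{Mu97Lemma3.2}, no twin apartments, no delicate bookkeeping beyond the single case split on $\ell(ws)$.
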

\begin{proof}
	We put $w := \delta_{\epsilon}(x,d_k)$ and remark that our assumption implies $w = \delta_*(c,d_k)$.
	Furthermore we put $s:=\delta_{\epsilon}(d_k,d)$ and let $P$ denote the $s$-panel containing $d_k$ and $d$. By our 
	assumptions we have $\delta_{\epsilon}(x,d) \in \{ w, ws \}$. We have two cases:
	
	\smallskip
	\noindent
	$\ell(ws) = \ell(w)-1$: As $\delta_*(c,d_k) = w$ it follows that $d_k = \proj_P c$ and $\delta_*(c,d) =ws$.
	Let $x' \in \C_{\epsilon}$ be a chamber such that $\delta_{\epsilon}(x',d)=ws$. Then we have $x' \in c^{op}$ and $\delta_{\epsilon}(x',d_k) = w = \delta_{\epsilon}(x,d_k) = \delta_*(c,d_k)$ and $\ell_{\epsilon}(x',d) = k-1 < k+1$. Thus the assertion follows by setting $z:= d_k$.
		
	\smallskip
	\noindent
	$\ell(ws) = \ell(w)+1$: We put $z:= \proj_P c$. As $\ell_*(c,d) \leq k$ it follows that $z \neq d$ and $\delta_*(c,d) = w$.
	Let $x' \in \C_{\epsilon}$ be a chamber such that $\delta_{\epsilon}(x',d) =w$. Then $\delta_{\epsilon}(x',z) = ws = \delta_{\epsilon}(x,z) = \delta_*(c,z)$,
	and $x' \in c^{op}$ and the assertion follows.
\end{proof}

\begin{lemma}\label{Mu97Lemma6.10}
	Let $\epsilon \in \{+, -\}, c\in \Cm$ and let $x, y \in c^{op}$. Then there exist $k\in \NN$, a sequence $x=x_0, \ldots, x_k = y$ of chambers in $c^{op}$ and a sequence $z_1, \ldots, z_k$ of chambers in $\Cp$ such that $\delta_*(c, z_{\lambda}) = \delta_{\epsilon}(x_{\lambda -1}, z_{\lambda}) = \delta_{\epsilon}(x_{\lambda}, z_{\lambda})$ for each $1 \leq \lambda \leq k$.
\end{lemma}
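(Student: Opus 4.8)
The goal is to connect any two chambers $x,y\in c^{op}$ by a sequence of chambers in $c^{op}$, consecutive terms of which are ``linked'' through a chamber $z_\lambda\in\C_\epsilon$ with $\delta_*(c,z_\lambda)=\delta_\epsilon(x_{\lambda-1},z_\lambda)=\delta_\epsilon(x_\lambda,z_\lambda)$. The natural strategy is induction on $\ell_\epsilon(x,y)$, using the previous (unnumbered) Lemma as the elementary reduction step.

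\medskip\noindent
First I would treat the base case $\ell_\epsilon(x,y)=0$, i.e.\ $x=y$: here one can take $k=1$, $x_0=x_1=x$ and $z_1=x$, since $\delta_*(c,x)=1_W=\delta_\epsilon(x,x)$. (Alternatively allow $k=0$ with an empty sequence, but a nonempty choice keeps the statement uniform.) For the inductive step, assume $\ell_\epsilon(x,y)=n>0$ and the result holds for all pairs of opposite-to-$c$ chambers at $\epsilon$-distance $<n$. Choose a minimal gallery $(x=d_0,d_1,\dots,d_n=y)$ in $\Delta_\epsilon$. Since $\delta_*(c,d_0)=1_W$ and, along a minimal gallery from the chamber $x$ opposite $c$, the codistance to $c$ grows by exactly $1$ at each step as long as we stay on the ``far side'' — more precisely, because $x\in c^{op}$, axioms (Tw2)/(Tw3) together with the fact that $\delta_\epsilon(x,d_i)=s_1\cdots s_i$ with strictly increasing length give $\ell_*(c,d_i)=i$ for $i$ up to the first index where this fails. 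Pick the largest $m\le n$ with $\ell_*(c,d_i)=i$ for all $i\le m$. If $m=n$ then $\ell_*(c,y)=n$; but $y\in c^{op}$ forces $\ell_*(c,y)=0$, so $n=0$, contradicting $n>0$. Hence $m<n$, and applying the previous Lemma to the sub-gallery $(d_0,\dots,d_m,d_{m+1})$ (with $k=m$, $d=d_{m+1}$, $\ell_*(c,d_{m+1})\le m$) yields chambers $x',z\in\C_\epsilon$ with $x'\in c^{op}$, $\delta_*(c,z)=\delta_\epsilon(x,z)=\delta_\epsilon(x',z)$ and $\ell_\epsilon(x',d_{m+1})<m+1$, i.e.\ $\ell_\epsilon(x',d_{m+1})\le m$.

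\medskip\noindent
Now I would estimate $\ell_\epsilon(x',y)$. We have $\ell_\epsilon(x',y)\le \ell_\epsilon(x',d_{m+1})+\ell_\epsilon(d_{m+1},y)\le m+(n-(m+1))=n-1<n$, using that $(d_{m+1},\dots,d_n)$ is a sub-gallery of a minimal gallery and hence minimal of length $n-m-1$. By the induction hypothesis applied to the pair $x',y\in c^{op}$, there are $k'\in\NN$, a sequence $x'=x_1',\dots,x_{k'}'=y$ in $c^{op}$ and linking chambers $z_1',\dots,z_{k'}'$ with the required property. Prepending the single step $x=x_0\rightsquigarrow x'$ with linking chamber $z$ (which works because $\delta_*(c,z)=\delta_\epsilon(x,z)=\delta_\epsilon(x',z)$ was just produced) gives the desired sequence of total length $k=k'+1$ from $x$ to $y$. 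This completes the induction.

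\medskip\noindent
The main obstacle is the bookkeeping around the index $m$: one must be sure that $m<n$ (so the previous Lemma is genuinely applicable with the strict inequality $\ell_*(c,d)\le k$) and that the new chamber $x'$ is strictly closer to $y$ than $x$ was, so that the induction actually descends. The first point is where opposition of $x$ to $c$ is used decisively: it rules out $m=n$. The second is a short triangle-inequality argument using minimality of sub-galleries. Everything else is a direct invocation of the preceding Lemma and of axioms (Tw1)--(Tw3); no delicate computation is needed beyond tracking lengths.
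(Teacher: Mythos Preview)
Your proof is correct and follows essentially the same approach as the paper: induction on $\ell_\epsilon(x,y)$, choosing a minimal gallery from $x$ to $y$, taking the largest index $m$ with $\ell_*(c,d_i)=i$ for all $i\le m$ (which coincides with the paper's $\max\{i\mid \ell_*(c,d_i)=i\}$ since $\ell_*$ changes by at most one along the gallery), applying the preceding lemma to produce $x'$ and $z$, and then invoking the induction hypothesis on the pair $(x',y)$. Your write-up spells out the two points the paper leaves implicit---that $m<n$ because $\ell_*(c,y)=0$, and the triangle inequality giving $\ell_\epsilon(x',y)<n$---but the argument is otherwise identical.
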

\begin{proof}
	Let $(x=d_0, \ldots, d_m = y)$ be a minimal gallery joining $x$ and $y$. We will prove the assertion by induction on $m:= \ell_{\epsilon}(x,y)$. Setting $z:=x=y$ the assertion is trivial for $m = 0$
	and we may assume that $m > 0$.
	
	Let $k := \max \{ 0 \leq i \leq m \mid \ell_*(c,d_i) = i \}$ and put $d := d_{k+1}$. By the previous lemma
	there are chambers $x',z \in \C_{\epsilon}$ such that $x' \in c^{op}$, $\delta_{\epsilon}(x,z) = \delta_{\epsilon}(x',z) = \delta_*(c,z)$
	and $\ell_{\epsilon}(x',d) \leq k$. It follows $\ell_{\epsilon}(x',y) < m$ and we may apply induction to $x'$ and $y$ in order to obtain
	the desired sequences $x = x_0,x_1=x',\ldots,x_k=y$ and $z_1=z,z_2,\ldots,z_k$ of chambers.
\end{proof}

\section{Isometries}

Let $(W, S)$ be $2$-spherical and of rank at least $3$. Let $\Delta$ be thick and let $\Delta' = (\Delta_+', \Delta_-', \delta_*')$ be a thick twin building of type $(W, S)$. We define $\C', \Delta_+', \Delta_-', \delta', \ell'$ as in the case of $\Delta$.

\subsection*{Definition and basic facts about isometries}

Let $\mathcal{X} \subseteq \C, \mathcal{X}' \subseteq \C'$. A mapping $\phi: \mathcal{X} \to \mathcal{X}'$ is called \textit{isometry} if the following conditions are satisfied:
\begin{enumerate}[label=(Iso\arabic*), leftmargin=*]
	\item The mapping $\phi$ is bijective.
	
	\item For $\epsilon \in \{+,-\}$ we have $\phi(\mathcal{X} \cap \Cp) \subseteq \Cp'$.
	
	\item If $x, y \in \mathcal{X}$ then $\delta'(\phi(x), \phi(y)) = \delta(x, y)$.
\end{enumerate}
Given $\mathcal{X} \subseteq \C, \mathcal{X}' \subseteq \C'$, an isometry $\phi: \mathcal{X} \to \mathcal{X}'$ and $(y, y') \in \C \times \C'$, then the pair $(y, y')$ will be called \textit{$\phi$-admissible} if the mapping $y \mapsto y'$ extends $\phi$ to an isometry from $\mathcal{X} \cup \{y\}$ onto $\mathcal{X}' \cup \{y'\}$. In particular, $(x, \phi(x))$ is $\phi$-admissible for any $x\in \mathcal{X}$. For $x, y \in \mathcal{X}$ with $(x, y) \in \overline{\C}$ we define $\phi((x, y)) := (\phi(x), \phi(y))$. Since the building has rank at least three it is a fact that for $(x, x') \in \C \times \C'$ and $\phi: E_2(x) \to E_2(x')$ an isometry, we have $\phi(x) = x'$.

\begin{lemma}\label{ZusammengesetzteIsometrie}
	Let $\mathcal{S}, \mathcal{X} \subseteq \C, \mathcal{S}', \mathcal{X}' \subseteq \C'$ be such that $\mathcal{S} \cap \mathcal{X} = \emptyset$ and $\mathcal{S}' \cap \mathcal{X}' = \emptyset$. Let $\phi: \mathcal{S} \to \mathcal{S}'$ and $\psi: \mathcal{X} \to \mathcal{X}'$ be two isometries such that $(z, \psi(z))$ is $\phi$-admissible for any $z\in \mathcal{X}$. Then the mapping
	\[ \phi \cup \psi: \mathcal{S} \cup \mathcal{X} \to \mathcal{S}' \cup \mathcal{X}', x\mapsto \begin{cases}
	\phi(x) & \text{ if } x\in \mathcal{S}, \\
	\psi(x) & \text{ if } x\in \mathcal{X}.
	\end{cases} \]
	is an isometry.
\end{lemma}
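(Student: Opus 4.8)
The plan is to verify the three axioms (Iso1)--(Iso3) for the map $\phi\cup\psi$ directly, case by case. The two easy axioms come first. Axiom (Iso2) is immediate: both $\phi$ and $\psi$ preserve signs by hypothesis, so $(\phi\cup\psi)\bigl((\mathcal{S}\cup\mathcal{X})\cap\Cp\bigr)\subseteq\Cp'$ for each $\epsilon\in\{+,-\}$. For (Iso1), the map is well defined precisely because $\mathcal{S}\cap\mathcal{X}=\emptyset$; it is injective because $\phi$ and $\psi$ are each injective and their images are contained in the disjoint sets $\mathcal{S}'$ and $\mathcal{X}'$ (this is the only place the hypothesis $\mathcal{S}'\cap\mathcal{X}'=\emptyset$ enters); and it is surjective onto $\mathcal{S}'\cup\mathcal{X}'$ since $\phi$ and $\psi$ already are.

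The substance is in (Iso3). Given $x,y\in\mathcal{S}\cup\mathcal{X}$ one splits according to where $x$ and $y$ lie. Since $\delta(y,x)=\delta(x,y)^{-1}$ for all $x,y\in\C$ (using (Tw1) in the mixed-sign situation), it suffices to check unordered pairs. If $x,y\in\mathcal{S}$ then $\delta'\bigl((\phi\cup\psi)(x),(\phi\cup\psi)(y)\bigr)=\delta'(\phi(x),\phi(y))=\delta(x,y)$ because $\phi$ is an isometry; the case $x,y\in\mathcal{X}$ is identical with $\psi$ in place of $\phi$. The only remaining case is $x\in\mathcal{S}$ and $y\in\mathcal{X}$. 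Here one invokes the hypothesis that $(y,\psi(y))$ is $\phi$-admissible: by definition this means that the extension of $\phi$ to $\mathcal{S}\cup\{y\}$ sending $y\mapsto\psi(y)$ is again an isometry, so in particular $\delta'(\phi(x),\psi(y))=\delta(x,y)$, which is exactly the required identity.

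I do not expect any genuine obstacle in this lemma; it is a routine gluing statement. The two points that merit a word of care are (i) that injectivity really does need the disjointness of $\mathcal{S}'$ and $\mathcal{X}'$ — without it the images of $\phi$ and $\psi$ could collide — and (ii) that the $\phi$-admissibility assumption is used only in the single mixed case $x\in\mathcal{S}$, $y\in\mathcal{X}$, every other pair being handled by $\phi$ or $\psi$ alone already being an isometry. Beyond that, the proof is a short verification of (Iso1)--(Iso3).
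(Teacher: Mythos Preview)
Your proof is correct and follows essentially the same approach as the paper's: both reduce (Iso3) to the mixed case $x\in\mathcal{S}$, $y\in\mathcal{X}$ and dispatch it via the $\phi$-admissibility hypothesis. The paper simply declares (Iso1) and (Iso2) to be clear, whereas you spell out why disjointness of $\mathcal{S}'$ and $\mathcal{X}'$ is needed for injectivity; otherwise the arguments are identical.
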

\begin{proof}
	Let $\Phi := \phi \cup \psi$. Clearly, $\Phi$ is a bijection satisfying (Iso$2$). It suffices to show, that $\delta(x, y) = \delta'(\Phi (x), \Phi (y))$ for any $x \in \mathcal{S}, y\in \mathcal{X}$. Let $x\in \mathcal{S}$ and $y\in \mathcal{X}$. Then we have $\delta'(\Phi (x), \Phi (y)) = \delta'(\phi(x), \psi(y)) = \delta(x, y)$, because $(y, \psi(y))$ is $\phi$-admissible. This finishes the claim.
\end{proof}

\begin{lemma}\label{Mu97Lemma4.2}
	Let $J$ be a spherical subset of $S$, let $R \subseteq \C, R' \subseteq \C'$ be $J$-residues, let $\phi: R \to R'$ be an isometry, and let $(x, x')$ be a $\phi$-admissible pair. Then $\phi(\proj_R x) = \proj_{R'} x'$.
\end{lemma}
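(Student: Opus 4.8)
The plan is to characterise both projections purely through lengths of Weyl-distances and codistances, which an isometry preserves, and then to transport the characterisation across $\phi$. Write $z := \proj_R x$ and fix $\epsilon \in \{+,-\}$ with $R \subseteq \C_\epsilon$; then $R' \subseteq \C_\epsilon'$ by (Iso$2$). Recalling the definitions from Sections~\ref{Sectionpreliminaries} and~\ref{Sectiontwinbuildings}, the chamber $z$ is the \emph{unique} chamber of $R$ with
\[
  \ell(x,y) \;=\; \ell(x,z) + \eta\,\ell(z,y) \qquad \text{for every } y \in R,
\]
where $\eta = +1$ if $x \in \C_\epsilon$ and $\eta = -1$ if $x \in \C_{-\epsilon}$; in the latter case the projection exists because $J$ is spherical, and $\proj_{R'} x'$ is likewise defined since $x' \in \C_{-\epsilon}'$, again by (Iso$2$).

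Next I would push this identity through $\phi$. As $(x,x')$ is $\phi$-admissible, the map sending $y \in R$ to $\phi(y)$ and $x$ to $x'$ is an isometry from $R \cup \{x\}$ onto $R' \cup \{x'\}$; hence $\delta(x,y) = \delta'(x',\phi(y))$ for all $y \in R$ and $\delta(x,z) = \delta'(x',\phi(z))$, while $\delta(z,y) = \delta'(\phi(z),\phi(y))$ for all $y \in R$ because $\phi$ is an isometry. Taking lengths and using that $\phi$ maps $R$ onto $R'$, the displayed identity turns into $\ell'(x',y') = \ell'(x',\phi(z)) + \eta\,\ell'(\phi(z),y')$ for every $y' \in R'$. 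So $\phi(z)$ satisfies the defining property of $\proj_{R'} x'$, and by the uniqueness contained in that property, $\phi(\proj_R x) = \phi(z) = \proj_{R'} x'$.

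The only points needing care --- there is no real obstacle --- are the case distinction on whether $x$ lies in the same building as $R$ or in the opposite one (in the opposite case one uses that $J$ is spherical so that both projections are defined) and the observation that the sign constraints making $\proj_{R'} x'$ meaningful are inherited through (Iso$2$); the rest is the substitution above, legitimate because an isometry preserves $\delta$, $\delta'$ and therefore every length $\ell(u,v) = \ell'(\phi(u),\phi(v))$ that appears.
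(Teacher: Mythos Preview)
Your argument is correct: the length identity characterising $\proj_R x$ (with the appropriate sign $\eta$ depending on whether $x$ lies in the same half as $R$ or the opposite one) is preserved verbatim under the extended isometry $R\cup\{x\}\to R'\cup\{x'\}$, and uniqueness of the projection then forces $\phi(z)=\proj_{R'}x'$. The paper itself gives no argument and simply cites Lemma~(4.4) of \cite{Ro00}; what you have written is essentially the standard proof one finds there, so there is nothing substantive to compare.
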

\begin{proof}
	This is Lemma $(4.4)$ of \cite{Ro00}.
\end{proof}

\begin{lemma}\label{Mu97Lemma4.3}
	Let $J$ be a spherical subset of $S$, let $R_+, R_- \subseteq \C$ (resp. $R_+', R_-' \subseteq \C'$) be opposite $J$-residues in $\Delta$ (resp. $\Delta'$), let $\phi: R_+ \cup R_- \to R_+' \cup R_-'$ be an isometry and let $\epsilon \in \{+,-\}$. Then $\phi(x) = \proj_{R_{\epsilon}'} \phi(\proj_{R_{-\epsilon}} x)$ for each $x\in R_{\epsilon}$.
\end{lemma}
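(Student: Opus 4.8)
The plan is to reduce the asserted identity to a statement about projections between a pair of opposite spherical residues, where Lemma~\ref{Mu97Lemma3.4} gives a convenient characterisation in terms of the codistance, and then to transport that characterisation across the isometry $\phi$.

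First I would record that $\phi$ restricts to bijections $R_{\epsilon} \to R_{\epsilon}'$ and $R_{-\epsilon} \to R_{-\epsilon}'$: by (Iso2) we have $\phi(R_{\epsilon}) \subseteq \C_{\epsilon}'$, hence $\phi(R_{\epsilon}) \subseteq (R_+' \cup R_-') \cap \C_{\epsilon}' = R_{\epsilon}'$ and symmetrically $\phi(R_{-\epsilon}) \subseteq R_{-\epsilon}'$; as $\phi$ is a bijection and $\C_+' \cap \C_-' = \emptyset$, both inclusions are equalities. Now fix $x \in R_{\epsilon}$ and put $y := \proj_{R_{-\epsilon}} x \in R_{-\epsilon}$. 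Since $x \in R_{\epsilon}$ and $R_{\epsilon}, R_{-\epsilon}$ are opposite $J$-residues with $J$ spherical, Lemma~\ref{Mu97Lemma3.4} (the implication $(i) \Rightarrow (ii)$) gives $\delta_*(x, y) = r_J$. Because $\phi$ is an isometry, (Iso3) yields $\delta_*'(\phi(x), \phi(y)) = \delta_*(x, y) = r_J$, where $\phi(x) \in R_{\epsilon}'$ and $\phi(y) \in R_{-\epsilon}'$ lie in opposite $J$-residues of $\Delta'$. Applying Lemma~\ref{Mu97Lemma3.4} once more, now in $\Delta'$ and to the pair $(\phi(x), \phi(y))$ (the implication $(ii) \Rightarrow (iii)$), we obtain $\proj_{R_{\epsilon}'} \phi(y) = \phi(x)$, that is, $\phi(x) = \proj_{R_{\epsilon}'} \phi(\proj_{R_{-\epsilon}} x)$, which is the claim.

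A variant avoids the codistance: one applies Lemma~\ref{Mu97Lemma4.2} to the isometry $\phi|_{R_{-\epsilon}} \colon R_{-\epsilon} \to R_{-\epsilon}'$ and the pair $(x, \phi(x))$, which is $\phi|_{R_{-\epsilon}}$-admissible because the restriction of $\phi$ to $R_{-\epsilon} \cup \{x\}$ is already an isometry; this gives $\phi(\proj_{R_{-\epsilon}} x) = \proj_{R_{-\epsilon}'} \phi(x)$, and then Lemma~\ref{Mu97Lemma3.4} in $\Delta'$ (the implication $(iii) \Rightarrow (i)$) concludes. Either way the argument is short, and I do not expect a genuine obstacle; the only things needing care are bookkeeping — verifying the admissibility hypothesis of Lemma~\ref{Mu97Lemma4.2} (resp. the memberships $\phi(x) \in R_{\epsilon}'$, $\phi(y) \in R_{-\epsilon}'$), and keeping track of which of the two opposite residues plays the role of "$R$" and which of "$T$" in Lemma~\ref{Mu97Lemma3.4}, whose three conditions are symmetric under interchanging them.
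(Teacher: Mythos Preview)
Your proposal is correct, and your second (``variant'') approach is exactly the paper's proof: the paper simply says the lemma is a consequence of Lemma~\ref{Mu97Lemma4.2} and Lemma~\ref{Mu97Lemma3.5}, which is precisely your argument of applying Lemma~\ref{Mu97Lemma4.2} to get $\phi(\proj_{R_{-\epsilon}} x) = \proj_{R_{-\epsilon}'} \phi(x)$ and then using that $\proj_{R_{\epsilon}'}$ and $\proj_{R_{-\epsilon}'}$ are mutually inverse (your citation of Lemma~\ref{Mu97Lemma3.4} $(iii)\Leftrightarrow(i)$ is just Lemma~\ref{Mu97Lemma3.5} restated). Your first approach via the codistance characterisation $\delta_*(x,y)=r_J$ is an equally short alternative that bypasses Lemma~\ref{Mu97Lemma4.2} altogether.
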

\begin{proof}
	This is a consequence of the previous Lemma and Lemma \ref{Mu97Lemma3.5}.
\end{proof}

\begin{lemma}\label{Mu97Lemma4.4}
	Let $x\in \C, x' \in \C'$, let $\Sigma \subseteq \C$ be an apartment containing $x$ and let $\phi,\psi: E_2(x) \to E_2(x')$ be two isometries which agree on $E_1(x)$. If they also agree on $\Sigma \cap E_2(x)$, then we have $\phi = \psi$.
\end{lemma}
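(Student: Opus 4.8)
The statement asserts that an isometry $E_2(x) \to E_2(x')$ is determined by its restriction to $E_1(x)$ together with its values on $\Sigma \cap E_2(x)$, where $\Sigma$ is an apartment through $x$. My plan is to show that the two isometries $\phi$ and $\psi$ agree on every rank-$2$ residue $R_J(x)$ with $|J| = 2$, which exhausts $E_2(x)$. Fix such a $J$ and write $R := R_J(x)$, a spherical building of type $(\langle J\rangle, J)$ of rank $2$, which is thick (since $\Delta$ is thick). Note $\phi$ and $\psi$ restrict to isometries $R \to \phi(R)$ and $R \to \psi(R)$; since both agree with the common map on $E_1(x) \cap R = P_s(x) \cup P_t(x)$ (where $J = \{s,t\}$), and an isometry of buildings sends residues to residues of the same type, we have $\phi(R) = \psi(R) =: R'$, and $\phi, \psi$ agree on the two panels of $R$ through $x$.

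The key tool is that $\Sigma \cap R$ is an apartment of the rank-$2$ building $R$ containing $x$, and by hypothesis $\phi = \psi$ on $\Sigma \cap R$. So the problem reduces to the following purely spherical rank-$2$ statement: \emph{two isometries of a thick generalized polygon that agree on the two panels through a chamber $x$ and on an apartment through $x$ must be equal.} To prove this I would argue by a gallery/convexity argument: given any chamber $d \in R$, I want to express $d$ through data on which $\phi$ and $\psi$ provably agree. The standard approach is via projections. Let $P$ be a panel of $R$ meeting $\Sigma \cap R$ but not containing $x$; such panels exist since $\Sigma \cap R$ is a (finite, thin) circuit. For a chamber $c$ of one of the two panels through $x$, the projection $\proj_P c$ lies in $P$, and by Lemma \ref{Mu97Lemma4.2} applied with the $\phi$-admissible (resp.\ $\psi$-admissible) pair $(c, \phi(c)) = (c,\psi(c))$ we get $\phi(\proj_P c) = \proj_{\phi(P)} \phi(c) = \proj_{\psi(P)}\psi(c) = \psi(\proj_P c)$, provided we already know $\phi(P) = \psi(P)$. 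Since $P$ meets $\Sigma\cap R$ and $\phi=\psi$ there, $\phi$ and $\psi$ agree on at least one chamber of $P$; as $\phi(P),\psi(P)$ are both the $(\text{type of }P)$-panel through that common image chamber, indeed $\phi(P)=\psi(P)$. Now as $c$ ranges over a panel through $x$, the projections $\proj_P c$ range over all of $P$ except possibly $\proj_P x$ — and here thickness is essential, because it guarantees the panels through $x$ have at least $3$ chambers so that these projections cover $P$ (in a thin building this would fail). Combined with the one chamber of $P$ lying in $\Sigma$, we conclude $\phi = \psi$ on all of $P$.

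Having upgraded agreement from $\Sigma\cap R$ to the full panels $P$ of $R$ that meet $\Sigma$, one iterates: every chamber of $R$ can be reached from $x$ by a gallery all of whose panels meet $\Sigma\cap R$ — this is exactly the statement that in a rank-$2$ building every chamber lies on a panel meeting a given apartment through a fixed chamber, which follows from the structure of generalized polygons (the diameter bound) together with the projection argument just used, applied panel by panel. More carefully: I would induct on $\ell_J(x,d)$; for the inductive step pick a panel $P$ of $R$ with $\proj_P x = d_{k-1}$ on a minimal gallery $x,\dots,d_{k-1},d$, note that by convexity and finiteness $P$ meets $\Sigma\cap R$ (every panel of $R$ at distance $\le \text{diam}-1$ from $x$ does), and apply Lemma \ref{Mu97Lemma4.2} again. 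The main obstacle is this last combinatorial point — verifying that one can actually propagate the equality to \emph{all} of $R$ by such projections, i.e.\ that enough panels meet the apartment; this is where thickness and the rank-$2$ sphericity are used crucially, and one must handle the panels through $x$ themselves as the base case (where $\phi=\psi$ holds by the $E_1(x)$ hypothesis). Once $\phi=\psi$ on every rank-$2$ residue through $x$, they agree on $E_2(x)$ and we are done.
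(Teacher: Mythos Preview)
Your reduction to each rank-$2$ residue $R = R_J(x)$ and the observation that $\Sigma \cap R$ is an apartment of $R$ are correct and match the paper exactly. At that point, however, the paper does not argue further: it simply invokes Theorem~4.1.1 of \cite{Ti74}, which is precisely the rigidity statement that an isometry of a thick irreducible spherical building is determined by its values on $E_1(c)$ together with an apartment through $c$. So the rank-$2$ fact you try to prove by hand is treated as a classical black box.

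Your direct argument for that rank-$2$ statement contains two concrete errors. First, the claim that as $c$ ranges over a panel through $x$ the projections $\proj_P c$ cover $P$ is false unless $P$ is \emph{opposite} to that panel. For a panel $P$ meeting $\Sigma \cap R$ at short distance from $x$ (for instance the panel through the second and third chambers of the circuit $\Sigma \cap R$), one has $\proj_P c = \proj_P x$ for \emph{every} $c$ in the relevant panel through $x$, so the projection recovers only the one chamber of $P$ already lying in $\Sigma$. Second, the iteration rests on the assertion that the panels along a minimal gallery from $x$ all meet $\Sigma \cap R$ (``every panel of $R$ at distance $\le \text{diam}-1$ from $x$ does''); this is plainly false, since only $2m_{st}$ panels of $R$ meet a fixed apartment while a thick generalized $m_{st}$-gon has many more.

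A correct direct proof does exist but is not the propagation you sketch. The key is the chamber $x^\ast \in \Sigma \cap R$ opposite $x$: the two panels through $x^\ast$ \emph{are} opposite those through $x$, so the projection bijections together with Lemma~\ref{Mu97Lemma4.2} yield $\phi = \psi$ on all of $E_1(x^\ast)$. From $E_1(x)$, $E_1(x^\ast)$ and $\Sigma \cap R$ one can then reach every chamber of $R$ by iterated projections, but organising this correctly is essentially the content of Tits' theorem, which is why the paper cites it rather than reproving it.
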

\begin{proof}
	For each subset $J$ of $S$ of cardinality $2$ we denote the restriction of $\phi$ (resp. $\psi$) on $R_J(x)$ by $\phi_J$ (resp. $\psi_J$).
	
	Let $J \subseteq S$ be of cardinality $2$ and let $\Sigma$ be as in the statement. Then $\phi_J$ and $\psi_J$ agree on $\Sigma \cap R_J(x)$ which is an apartment of $R_J(x)$. The claim follows from Theorem $4.1.1$ in \cite{Ti74}.
\end{proof}

\begin{lemma}\label{Mu97Lemma4.5}
	Let $\phi_+: \C_+ \to \C_+'$ be a map and let $\left( \phi_x: E_2(x) \to E_2(\phi_+(x)) \right)_{x \in \C_+}$ be a family of isometries such that $\phi_x$ and $\phi_y$ agree on $E_2(x) \cap E_2(y)$ whenever $x, y \in \C_+$ are adjacent. Then $\phi_+$ is an isometry and $\phi_x$ is the restriction of $\phi_+$ on $E_2(x)$ for each $x\in \C_+$.
\end{lemma}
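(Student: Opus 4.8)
The plan is to establish the second assertion first and then deduce that $\phi_+$ satisfies (Iso1)--(Iso3). Recall the fact stated just before Lemma~\ref{ZusammengesetzteIsometrie}: an isometry $E_2(z) \to E_2(z')$ maps $z$ to $z'$; applied to each $\phi_z$ this gives $\phi_z(z) = \phi_+(z)$ for all $z \in \C_+$. \textbf{Step 1 ($\phi_x$ agrees with $\phi_+$ on $E_2(x)$).} Let $z \in E_2(x)$, choose $J \subseteq S$ with $\vert J \vert \le 2$ and $z \in R := R_J(x)$, and pick a gallery $x = c_0, c_1, \dots, c_k = z$ inside $R$. For each $i$ we have $R = R_J(c_i) \subseteq E_2(c_i)$ since $c_i \in R$ and $\vert J \vert \le 2$; hence $z \in R \subseteq E_2(c_{i-1}) \cap E_2(c_i)$, and since $c_{i-1}, c_i$ are adjacent the hypothesis yields $\phi_{c_{i-1}}(z) = \phi_{c_i}(z)$. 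Telescoping and using $\phi_z(z) = \phi_+(z)$ gives $\phi_x(z) = \phi_{c_0}(z) = \dots = \phi_{c_k}(z) = \phi_z(z) = \phi_+(z)$. In particular, if $x, y \in \C_+$ with $\ell_+(x,y) \le 2$ then $\delta_+(x,y)$ lies in $\langle J \rangle$ for some $J$ with $\vert J \vert \le 2$, so $y \in E_2(x)$ and $\delta_+'(\phi_+(x), \phi_+(y)) = \delta_+'(\phi_x(x), \phi_x(y)) = \delta_+(x,y)$; thus $\phi_+$ sends $s$-adjacent chambers to $s$-adjacent chambers for each $s \in S$.

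\textbf{Step 2 (verification of (Iso3)).} I would prove $\delta_+'(\phi_+(x), \phi_+(y)) = \delta_+(x,y)$ by induction on $n := \ell_+(x,y)$, the cases $n \le 2$ being Step 1. So let $n \ge 3$, put $w := \delta_+(x,y)$ and choose $s \in S$ with $\ell(ws) = n-1$. Put $y' := \proj_{\P_s(y)} x$; by the standard properties of projections onto panels (and (Bu3)) we have $y' \neq y$, $\delta_+(y',y) = s$ and $\delta_+(x,y') = ws$, whence $\ell_+(x,y') = n-1$ and the induction hypothesis gives $\delta_+'(\phi_+(x), \phi_+(y')) = ws$. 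By Step 1, $\delta_+'(\phi_+(y'), \phi_+(y)) = s$, and since $\ell(w) = \ell(ws)+1$ axiom (Bu2) applied in $\Delta_+'$ to the triple $\bigl(\phi_+(x), \phi_+(y'), \phi_+(y)\bigr)$ forces $\delta_+'(\phi_+(x), \phi_+(y)) = (ws)s = w = \delta_+(x,y)$.

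\textbf{Step 3 ((Iso1), (Iso2) and conclusion).} Condition (Iso2) is immediate since $\phi_+$ maps $\C_+$ into $\C_+'$. For injectivity: if $\phi_+(x) = \phi_+(y)$ then $\delta_+(x,y) = \delta_+'(\phi_+(x), \phi_+(y)) = 1_W$ by Step 2, so $x = y$ by (Bu1). For surjectivity: for each $c \in \C_+$ we have $E_1(\phi_+(c)) \subseteq E_2(\phi_+(c)) = \phi_c(E_2(c)) = \phi_+(E_2(c)) \subseteq \phi_+(\C_+)$, the first equality because $\phi_c$ is a bijection onto $E_2(\phi_+(c))$ and the second by Step 1; hence $\phi_+(\C_+)$ is a non-empty subset of $\C_+'$ closed under adjacency in $\Cbf(\Delta_+')$, and since that chamber system is connected (Proposition~\ref{Mu97Proposition2.2}) we conclude $\phi_+(\C_+) = \C_+'$. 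Therefore $\phi_+$ is an isometry, and by Step 1 each $\phi_x$ is its restriction to $E_2(x)$.

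I do not anticipate a genuine obstacle: essentially all the content sits in Step 1, after which the argument is routine. The two points deserving attention are that Step 1 must be carried out before $\phi_+$ is identified with the local maps $\phi_x$ anywhere else, and that in Step 2 the choice $\ell(ws) = \ell(w)-1$ lets axiom (Bu2) pin down the image of a reduced gallery by itself, so no separate ``local injectivity on panels'' argument is needed.
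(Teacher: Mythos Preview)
Your proof is correct. Step~1 coincides with the paper's first paragraph essentially verbatim, and your surjectivity argument in Step~3 is the same as the paper's. The genuine difference lies in how injectivity and (Iso3) are obtained. The paper observes that, once Step~1 is in place, $\phi_+$ restricts to an isometry on every rank~$2$ residue and is therefore a $2$-covering $\Cbf(\Delta_+)\to\Cbf(\Delta_+')$; injectivity then follows from the simple connectedness of $\Cbf(\Delta_+')$ (Proposition~\ref{Mu97Proposition2.2}), and the isometry property is implicit in the resulting chamber-system isomorphism. You instead bypass covering theory entirely: your Step~2 proves (Iso3) directly by induction on $\ell_+(x,y)$ using (Bu2), and injectivity then drops out of (Iso3) and (Bu1). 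Your route is more elementary and fully self-contained within the building axioms, at the cost of a small explicit induction; the paper's route is shorter to state but leans on the covering-theoretic framework and leaves the passage from ``bijective $2$-covering'' to ``isometry'' unspoken.
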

\begin{proof}
	Let $x, y \in \C_+$ be such that $y \in E_2(x)$, then we can find a gallery $(x = x_0, \ldots, x_k = y)$ in a rank $2$ residue containing $x$ and $y$. It follows that $y \in E_2(x_{\lambda})$ for each $0 \leq \lambda \leq k$ and using induction one obtains $\phi_x(y) = \phi_y(y) = \phi_+(y)$. This shows that $\phi_x$ coincides with the restriction of $\phi_+$ on $E_2(x)$.
	
	Now we will show that $\phi_+$ is surjective. Let $y' \in \C_+'$. Let $x \in \C_+$ and let $x' := \phi_+(x)$. As $\phi_x: E_2(x) \to E_2(x')$ is an isometry, it follows that $E_2(x') \subseteq \phi_+(\C_+)$. By induction on the length of a minimal gallery joining $x'$ and $y'$ in $\C_+'$ it follows that $y' \in \phi_+(\C_+')$ and hence the surjectivity of $\phi_+$.
	
	The restriction of $\phi_+$ on the rank $2$ residues being isometries it follows that $\phi_+: \Cbf(\Delta_+) \to \Cbf(\Delta_+')$ is a $2$-covering. Now the injectivity of $\phi_+$ follows from Proposition \ref{Mu97Proposition2.2}.
\end{proof}

\begin{lemma}\label{Mu97Lemma4.6}
	Let $\phi_+: \C_+ \to \C_+'$ be an isometry, let $(x, x') \in \C_- \times \C_-'$ and suppose that $\phi_+(x^{op}) \subseteq (x')^{op}$. Then $(x, x')$ is a $\phi_+$-admissible pair.
\end{lemma}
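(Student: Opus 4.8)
The plan is to exhibit the extension explicitly: set $\psi := \phi_+ \cup \{x \mapsto x'\}$ on $\C_+ \cup \{x\}$ and check the three isometry axioms. Bijectivity and (Iso$2$) are immediate, since $x$ (resp.\ $x'$) is the only chamber of $\mathcal{X} = \C_+ \cup \{x\}$ (resp.\ of $\C_+' \cup \{x'\}$) on the negative side. Since $\phi_+$ already preserves $\delta_+$, and using (Tw$1$) for the mirror pair, the entire content of (Iso$3$) comes down to the single identity
\[ \delta_*'(x', \phi_+(y)) = \delta_*(x, y) \qquad\text{for every } y \in \C_+ . \]
I would prove this by induction on $n := \ell_*(x,y)$. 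The case $n = 0$ is exactly the hypothesis $\phi_+(x^{op}) \subseteq (x')^{op}$.

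For $n \geq 1$, put $w := \delta_*(x,y)$, pick $s \in S$ with $\ell(ws) = n-1$, and work inside the $s$-panel $P$ of $y$ in $\Delta_+$, which is a spherical residue. The key tool is the rigidity of the codistance to $x$ along $P$: writing $p^* := \proj_P x$, the projection identities $\delta_*(x,p) = \delta_*(x,p^*)\,\delta_+(p^*,p)$ and $\ell_*(x,p) = \ell_*(x,p^*) - \ell_+(p^*,p)$ for $p \in P$ show that $\delta_*(x,\cdot)$ takes one common value of length $\ell_*(x,p^*) - 1$ on $P \setminus \{p^*\}$ and jumps by one at $p^*$. A length count forces $p^* = y$ (otherwise $\delta_*(x,p^*) = ws$ would have length $n-1$, while $\ell_*(x,p^*) = \ell_*(x,y) + 1 = n+1$). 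Hence $\delta_*(x,p) = ws$ with $\ell_*(x,p) = n-1$ for every $p \in P \setminus \{y\}$, and the induction hypothesis gives $\delta_*'(x',\phi_+(p)) = ws$ for all such $p$.

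Next I would transport this to $\Delta'$. Since $\phi_+$ is an isometry, it carries $P$ bijectively onto the $s$-panel $P' = \phi_+(P)$ of $\phi_+(y)$, and one runs the same analysis there with $q^* := \proj_{P'} x'$. I claim $q^* = \phi_+(y)$. Indeed, if $q^* \neq \phi_+(y)$ then $q^* = \phi_+(p_0)$ for some $p_0 \in P \setminus \{y\}$, so by the induction hypothesis $\delta_*'(x',q^*) = ws$, of length $n-1$; but then the projection identities on $P'$ make $\delta_*'(x',\phi_+(y)) = ws\cdot s = w$, of length $n$, while at the same time $\ell_*'(x',\phi_+(y)) = \ell_*'(x',q^*) - 1 = n-2$, a contradiction. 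Hence $q^* = \phi_+(y)$, and then for any $p \in P \setminus \{y\}$ the projection identity $\delta_*'(x',\phi_+(p)) = \delta_*'(x',\phi_+(y))\,s$ combined with $\delta_*'(x',\phi_+(p)) = ws$ yields $\delta_*'(x',\phi_+(y)) = w = \delta_*(x,y)$, which closes the induction.

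The step I expect to be the real obstacle is this last identification of $\delta_*'(x',\phi_+(y))$. The temptation is to apply Lemma \ref{Mu97Lemma4.2} to the restriction of $\phi_+$ to $P$ together with the pair $(x,x')$, but this is circular: the admissibility of $(x,x')$ for $\phi_+|_P$ already presupposes the identity at $y$ that we are trying to prove. The proof escapes the circularity because the induction hypothesis controls $\phi_+$ on all of $P \setminus \{y\}$, and the rigidity of codistances along the panel then leaves no room for the value at $y$ — which is exactly why the \emph{other} chambers of $P$, rather than $y$ itself, are what one feeds into the induction.
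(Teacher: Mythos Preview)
Your argument is correct. The induction on $\ell_*(x,y)$ works exactly as you describe: the projection identities force $\proj_P x = y$ on the source side, the induction hypothesis pins down $\delta_*'(x',\cdot)$ on $\phi_+(P\setminus\{y\})$, and the same rigidity on $P'$ then forces $\proj_{P'} x' = \phi_+(y)$ and hence $\delta_*'(x',\phi_+(y)) = w$. Your discussion of why one cannot simply invoke Lemma~\ref{Mu97Lemma4.2} for the pair $(x,x')$ is also on point.

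As for the comparison: the paper does not prove this lemma at all but cites it as Lemma~$(7.4)$ of \cite{Ro00}. So there is no in-paper argument to compare yours with. Your proof is a standard self-contained route to the result (and indeed is essentially how such lemmas are proved in the literature), so supplying it here is a genuine improvement over the bare citation.
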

\begin{proof}
	This is Lemma $(7.4)$ in \cite{Ro00}.
\end{proof}

\subsection*{Main results on local extensions of isometries}

In this subsection we let $\overline{c} := (c_+, c_-) \in \overline{\C}, \overline{c}' := (c_+', c_-') \in \overline{\C'}$.

\begin{proposition}\label{Mu97Proposition4.7}
	Let $\phi: E_2(c_+) \cup \{c_-\} \to E_2(c_+') \cup \{c_-'\}$ be an isometry. Then $\phi$ extends uniquely to an isometry from $E_2(c_+) \cup E_2(c_-)$ onto $E_2(c_+') \cup E_2(c_-')$.
\end{proposition}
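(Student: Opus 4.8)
The plan is to build the extension $\psi$ residue by residue on the negative side, using the opposition-projection machinery of Lemmas \ref{Mu97Lemma3.4} and \ref{Mu97Lemma3.5} to transport the given data from $\Delta_+$ to $\Delta_-$, and then to glue the pieces via Lemma \ref{Mu97Lemma4.5}' s twin-building analogue (more precisely, using Lemmas \ref{Mu97Lemma4.4} and \ref{ZusammengesetzteIsometrie}). First I would fix a spherical subset $J \subseteq S$ with $\lvert J\rvert \le 2$ and consider the $J$-residues $T := R_J(c_-) \subseteq \C_-$ and $R := R_J(c_+) \subseteq \C_+$. Since $c_+$ and $c_-$ are opposite, $R$ and $T$ are opposite residues of spherical type $J$, and likewise $R' := R_J(c_+')$ and $T' := R_J(c_-')$ are opposite in $\Delta'$. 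By Lemma \ref{Mu97Lemma3.5} the projection maps $\proj_R^T \colon T \to R$ and its inverse $\proj_T^R$ are bijections, and the same holds in $\Delta'$. I would then \emph{define} $\psi$ on $T$ by the formula forced by Lemma \ref{Mu97Lemma4.3}, namely
\[
\psi(y) := \proj_{T'} \bigl( \phi(\proj_R y) \bigr) \qquad (y \in T),
\]
where $\phi$ restricted to $R = R_J(c_+) \subseteq E_2(c_+)$ is available by hypothesis. One checks using Lemma \ref{Mu97Lemma3.4} (the characterisation $\proj_T x = y \iff \delta_*(x,y) = r_J$) together with Lemma \ref{Mu97Lemma4.2} that this $\psi$ is an isometry $T \to T'$ and that it sends $c_-$ to $c_-'$; the key point is that $\phi$ already knows $\delta_*(c_+, c_-) = 1_W$, and opposition inside $R \times T$ is governed entirely by the Weyl distances inside $R$, which $\phi$ preserves.

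Next I would verify \emph{compatibility on overlaps}. If $J, K$ are two spherical subsets of cardinality at most $2$, then $R_J(c_-) \cap R_K(c_-) = R_{J \cap K}(c_-)$, and the two candidate definitions of $\psi$ must agree there. This follows because $\proj_{R_J(c_-)}$ and $\proj_{R_K(c_-)}$ restrict compatibly onto the smaller residue (functoriality of projections, and the fact that a projection onto a sub-residue factors through any intermediate residue), combined with the corresponding compatibility of $\phi$ on $E_2(c_+)$ and of the $\Delta'$-projections. Having checked this, the various local pieces assemble into a single well-defined map
\[
\psi \colon E_2(c_-) = \bigcup_{J \subseteq S,\ \lvert J\rvert \le 2} R_J(c_-) \longrightarrow E_2(c_-'),
\]
and one must argue it is an isometry. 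For this I would use Lemma \ref{Mu97Lemma4.4} with apartments: pick a twin apartment $\Sigma = A(c_+, c_-)$; by Lemma \ref{Mu97Lemma3.2} it is stable under projections onto spherical residues meeting it, so $\psi$ is controlled on $\Sigma \cap E_2(c_-)$, and Lemma \ref{Mu97Lemma4.4} forces agreement of any two isometric extensions that coincide on $E_1(c_-)$ and on $\Sigma$. Finally, to combine $\psi$ on $E_2(c_-)$ with the original $\phi$ on $E_2(c_+) \cup \{c_-\}$ into an isometry on all of $E_2(c_+) \cup E_2(c_-)$, I would invoke Lemma \ref{ZusammengesetzteIsometrie} after checking that each pair $(y, \psi(y))$ with $y \in E_2(c_-)$ is $\phi$-admissible — which reduces, again via the projection formula and Lemma \ref{Mu97Lemma4.2}, to a statement about Weyl distances that $\phi$ already respects.

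The main obstacle I anticipate is \emph{well-definedness across overlapping rank-$\le 2$ residues on the negative side}, i.e.\ showing that the formula $\psi(y) = \proj_{T'}(\phi(\proj_R y))$ does not depend on which spherical residue $R_J(c_+)$ containing the relevant data one projects through. This is where the rank-$\ge 3$ hypothesis and the simple-connectedness input (Proposition \ref{Mu97Proposition2.2}) really enter: one needs that $E_2(c_-)$, as a chamber system, is ``rigid enough'' that an isometry defined locally and consistently on rank-$2$ residues glues, and that the transported map genuinely preserves $\delta_*$ for \emph{all} pairs in $E_2(c_-)$, not merely those lying in a common small residue. A secondary subtlety is the uniqueness clause: once existence is established, uniqueness follows from Lemma \ref{Mu97Lemma4.4} applied on the negative side (since rank $\ge 3$ means $E_1(c_-)$, hence $E_2(c_+) \cup \{c_-\}$, already pins down $\psi$ on each rank-$2$ residue of $\C_-$ via the admissibility constraint), but one must be careful to phrase this so that it does not secretly presuppose Condition $\co$.
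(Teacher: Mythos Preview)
The paper does not prove this proposition at all; it defers entirely to Proposition~(6.2) of \cite{Ro00}. Your projection-transport strategy via Lemmas~\ref{Mu97Lemma3.4}, \ref{Mu97Lemma3.5}, and \ref{Mu97Lemma4.3} is the natural one and is essentially what Ronan does, and your formula $\psi(y) = \proj_{T'}(\phi(\proj_R y))$ is correct and forced by Lemma~\ref{Mu97Lemma4.3}.

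That said, your sketch has a genuine gap at the gluing step. You invoke Lemma~\ref{Mu97Lemma4.4} to argue that the assembled map $\psi$ on $E_2(c_-)$ is an isometry, but Lemma~\ref{Mu97Lemma4.4} is a \emph{uniqueness} statement comparing two maps already known to be isometries; it cannot establish that a map built by gluing rank-$2$ pieces is itself an isometry. The real difficulty is the cross-residue check: for $y_1 \in R_J(c_-)$ and $y_2 \in R_K(c_-)$ with $J \neq K$, the distance $\delta_-(y_1,y_2)$ is not determined by the data on $R_J$ and $R_K$ separately, and likewise $\delta_*(x,y)$ for $x \in R_J(c_+)$, $y \in R_K(c_-)$ is not handled by Lemma~\ref{Mu97Lemma4.2} applied inside a single residue pair. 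You correctly flag this as the main obstacle, but your proposed resolution via Proposition~\ref{Mu97Proposition2.2} is misplaced: that proposition concerns simple connectedness of the full chamber system $\Cbf(\Delta)$, not of $E_2(c_-)$, and no homotopy argument is in play here. Ronan's actual argument in \cite{Ro00} proceeds by first extending over $E_1(c_-)$ (where the cross-panel codistance checks are short direct computations), and then pushing to each rank-$2$ residue using the spherical extension theorem for the twin pair $(R_J(c_+),R_J(c_-))$; the cross-residue codistance identities are then verified explicitly via the projection formula rather than by an abstract gluing principle.
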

\begin{proof}
	For a proof see Proposition $(6.2)$ of \cite{Ro00}.
\end{proof}

\begin{proposition}\label{Mu97Proposition4.8}
	Let $\overline{d} \in \overline{\C}$ such that $\overline{c}$ is adjacent to $\overline{d}$ in $\Opp(\Delta)$ and let $\phi: E_2(\overline{c}) \to E_2(\overline{c}')$ be an isometry. Then there exists a unique isometry $\psi: E_2(\overline{d}) \to E_2(\phi(\overline{d}))$ such that $\phi$ and $\psi$ agree on the intersection of their domains.
\end{proposition}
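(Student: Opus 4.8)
The plan is to fix $s\in S$ with $c_+\sim_s d_+$ and $c_-\sim_s d_-$, to put $d_\varepsilon':=\varphi(d_\varepsilon)$ for $\varepsilon\in\{+,-\}$ — so that $\varphi(\overline d)=(d_+',d_-')$ lies in $\overline{\C'}$, since $\varphi$ preserves the codistance $\delta_*(d_+,d_-)=1_W$ — and to assume $\overline d\neq\overline c$. Throughout I would use that, $(W,S)$ being $2$-spherical, every residue of rank at most $2$ is spherical, together with the elementary identities $E_2(c_+)\cap E_2(d_-)=\emptyset=E_2(c_-)\cap E_2(d_+)$ and $E_2(c_\varepsilon)\cap E_2(d_\varepsilon)=\bigcup_{t\in S}R_{\{s,t\}}(d_\varepsilon)$.

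First I would treat the \emph{main case}, in which (after possibly interchanging $+$ and $-$) the chamber $d_+$ is opposite $c_-$, i.e.\ $(d_+,c_-)\in\overline{\C}$. Here the idea is to move the negative foundation last. Restricting $\varphi$ gives an isometry $E_2(c_-)\cup\{d_+\}\to E_2(c_-')\cup\{d_+'\}$, to which I would apply Proposition~\ref{Mu97Proposition4.7} — with the roles of $\Delta_+$ and $\Delta_-$ interchanged and with $(d_+,c_-)$ in place of $(c_+,c_-)$ — to get an isometry $\chi\colon E_2(c_-)\cup E_2(d_+)\to E_2(c_-')\cup E_2(d_+')$ extending $\varphi$ on $E_2(c_-)\cup\{d_+\}$. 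Next I would verify that $\chi$ agrees with $\varphi$ on $E_2(c_+)\cap E_2(d_+)=\bigcup_t R_{\{s,t\}}(d_+)$: for each $t$ the residues $R_{\{s,t\}}(d_+)$ and $R_{\{s,t\}}(c_-)$ are opposite (the chambers $d_+$ and $c_-$ are), both $\varphi$ and $\chi$ restrict to isometries of $R_{\{s,t\}}(d_+)\cup R_{\{s,t\}}(c_-)$, and these restrictions coincide on $R_{\{s,t\}}(c_-)\subseteq E_2(c_-)$, so by Lemma~\ref{Mu97Lemma4.3} they coincide on $R_{\{s,t\}}(d_+)$ as well. Now $\chi|_{E_2(d_+)}$ is an isometry onto $E_2(d_+')$, the pair $(d_-,d_-')$ is $\chi|_{E_2(d_+)}$-admissible (witnessed by $\chi$ itself, which is an isometry on $E_2(c_-)\cup E_2(d_+)$ with $\chi(d_-)=\varphi(d_-)=d_-'$), and hence by Lemma~\ref{ZusammengesetzteIsometrie} the map $\chi|_{E_2(d_+)}\cup\{d_-\mapsto d_-'\}$ is an isometry on $E_2(d_+)\cup\{d_-\}$. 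Applying Proposition~\ref{Mu97Proposition4.7} once more, now at $\overline d=(d_+,d_-)$, extends it to an isometry $\psi\colon E_2(d_+)\cup E_2(d_-)\to E_2(\varphi(\overline d))$. That $\psi$ agrees with $\varphi$ on $E_2(\overline c)\cap E_2(\overline d)$ reduces, via the identities above, to the two families of residues $R_{\{s,t\}}(d_+)$ (where $\psi=\chi=\varphi$ by the previous step) and $R_{\{s,t\}}(d_-)=R_{\{s,t\}}(c_-)$, on which one compares the instance of Lemma~\ref{Mu97Lemma4.3} for $\psi$ (using that $R_{\{s,t\}}(d_+)$ and $R_{\{s,t\}}(d_-)$ are opposite) with the instance for $\varphi$ (using that $R_{\{s,t\}}(d_+)$ and $R_{\{s,t\}}(c_-)$ are opposite); they agree because $\psi=\varphi$ on $R_{\{s,t\}}(d_+)$ and $R_{\{s,t\}}(d_-')=R_{\{s,t\}}(c_-')$.

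For the \emph{general case}, suppose neither $d_+$ is opposite $c_-$ nor $d_-$ is opposite $c_+$; then by Lemma~\ref{Mu97Lemma3.3} we have $\delta_*(c_-,d_+)=s=\delta_*(c_+,d_-)$, and by Lemma~\ref{Mu97Lemma3.4} the chamber $d_+$ is the \emph{only} chamber of $\P_s(c_+)$ not opposite $c_-$, while $\proj_{\P_s(c_-)}c_+=d_-$. Using thickness I would pick $e_+\in\P_s(c_+)\setminus\{c_+,d_+\}$; then $e_+$ is opposite $c_-$, the vertex $\overline e:=(e_+,c_-)\in\overline{\C}$ is $s$-adjacent both to $\overline c$ and to $\overline d$, the step $\overline c\to\overline e$ is of main-case type, and — since $\proj_{\P_s(c_-)}$ is injective on $\P_s(c_+)$ by Lemma~\ref{Mu97Lemma3.5} and sends $c_+$ to $d_-$ (hence $e_+$, being $\neq c_+$, to a chamber $\neq d_-$) — the chamber $d_-$ is opposite $e_+$, so the step $\overline e\to\overline d$ is also of main-case type, with $+$ and $-$ interchanged. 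Applying the main case twice produces $\psi$ on $E_2(\overline d)$; since $E_2(\overline c)\cap E_2(\overline d)\subseteq E_2(\overline e)$ (because $E_2(c_+)\cap E_2(d_+)=\bigcup_t R_{\{s,t\}}(e_+)\subseteq E_2(e_+)$ and $E_2(c_-)\cap E_2(d_-)\subseteq E_2(c_-)$), this $\psi$ still agrees with the original $\varphi$ on $E_2(\overline c)\cap E_2(\overline d)$, and $\psi(\overline d)=\varphi(\overline d)$ likewise.

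Finally, for uniqueness: given any isometry $\psi$ as in the statement, $\psi(d_\varepsilon)=\varphi(d_\varepsilon)=d_\varepsilon'$ because $d_\varepsilon\in E_2(\overline c)\cap E_2(\overline d)$, so $\psi$ is forced on $E_1(d_+)\subseteq\bigcup_t R_{\{s,t\}}(d_+)$ (and on $E_1(d_-)$). For a residue $R$ of rank at most $2$ through $d_+$, the residue $R_J(d_-)$ (with $J$ the type of $R$) is opposite $R$, the pair $(d_-,d_-')$ is $\psi|_R$-admissible, so Lemma~\ref{Mu97Lemma4.2} forces $\psi(\proj_R d_-)=\proj_{\psi(R)}d_-'$; by Lemma~\ref{Mu97Lemma3.4} the chamber $\proj_R d_-$ is the one of $A_+(d_+,d_-)\cap R$ opposite $d_+$, so walking around this thin apartment of $R$ — whose two chambers adjacent to $d_+$ lie in $E_1(d_+)$ — forces $\psi$ on $A_+(d_+,d_-)\cap R$, hence on $A_+(d_+,d_-)\cap E_2(d_+)$, and Lemma~\ref{Mu97Lemma4.4} then forces $\psi|_{E_2(d_+)}$; symmetrically $\psi|_{E_2(d_-)}$ is forced, and thus so is $\psi$. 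I expect the hard part to be the main case: getting the sign-swapped application of Proposition~\ref{Mu97Proposition4.7} right (this is the step that replaces $E_2(c_+)$ by $E_2(d_+)$) together with the Lemma~\ref{Mu97Lemma4.3} bookkeeping that controls the new map on the overlap $E_2(c_+)\cap E_2(d_+)$, and then noticing that the generic adjacency is \emph{not} of main-case type and has to be factored through the auxiliary vertex $\overline e$ furnished by thickness.
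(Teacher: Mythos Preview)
The paper does not prove this proposition; it simply cites \cite[Proposition~(6.4)]{Ro00}. So there is no in-paper argument to compare against, and your proposal has to be judged on its own merits.

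Your argument is essentially correct and is very much in the spirit of Ronan's toolkit (Proposition~\ref{Mu97Proposition4.7} and Lemmas~\ref{Mu97Lemma4.2}--\ref{Mu97Lemma4.4}, which the paper imports from \cite{Ro00}). The decomposition into a ``main case'' ($d_+$ opposite $c_-$, handled by two applications of Proposition~\ref{Mu97Proposition4.7} with Lemma~\ref{Mu97Lemma4.3} controlling the overlaps $R_{\{s,t\}}(d_\varepsilon)$) and a ``general case'' (factored through an auxiliary $\overline e=(e_+,c_-)$ furnished by thickness) is the right architecture, and your verification that both steps $\overline c\to\overline e$ and $\overline e\to\overline d$ fall under the main case is clean. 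The identity $E_2(c_\varepsilon)\cap E_2(d_\varepsilon)=\bigcup_{t\in S}R_{\{s,t\}}(d_\varepsilon)$ is correct and is exactly what makes the Lemma~\ref{Mu97Lemma4.3} bookkeeping work.

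One simplification for uniqueness: your ``walking around the apartment'' step is valid but can be replaced by a one-line observation. Any $\psi$ as in the statement satisfies $\psi(d_-)=d_-'$, so for $z\in A_+(d_+,d_-)\cap E_2(d_+)$ one has
\[
\delta_*'(\psi(z),d_-')=\delta_*(z,d_-)=\delta_+(z,d_+)=\delta_+'(\psi(z),d_+'),
\]
whence $\psi(z)$ is the unique chamber of $A_+(d_+',d_-')$ at $\delta_+'$-distance $\delta_+(d_+,z)$ from $d_+'$. Thus $\psi$ is forced on $A_+(d_+,d_-)\cap E_2(d_+)$ directly, without needing $\proj_R d_-$ or the dihedral count; then Lemma~\ref{Mu97Lemma4.4} (together with $\psi=\varphi$ on $E_1(d_+)$) finishes as you indicate, and the uniqueness clause of Proposition~\ref{Mu97Proposition4.7} transports this to $E_2(d_-)$.
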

\begin{proof}
	This is Proposition $(6.4)$ of \cite{Ro00}.
\end{proof}

\begin{theorem}\label{Mu97Theorem4.9}
	Let $J$ be a subset of $S$ of cardinality at most $2$ and let $R_{\pm} := R_J(c_{\pm})$. Let $\overline{R} := \left( R_+ \times R_- \right) \cap \overline{\C}$ and let $\phi: E_2(\overline{c}) \to E_2(\overline{c}')$ be an isometry. Then there exists a unique system of isometries $\left( \phi_{\overline{x}}: E_2(\overline{x}) \to E_2(\phi(\overline{x})) \right)_{\overline{x} \in \overline{R}}$ such that the following is satisfied:
	\begin{enumerate}[label=(\alph*), leftmargin=*]
		\item $\phi_{\overline{c}} = \phi$;
		
		\item If $\overline{x}, \overline{y} \in \overline{R}$ are adjacent in $\Opp(\Delta)$, then $\phi_{\overline{x}}$ and $\phi_{\overline{y}}$ agree on the intersection of their domains.
	\end{enumerate}
\end{theorem}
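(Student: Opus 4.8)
The first step is to realise $\overline{R}$, with the adjacency inherited from $\Opp(\Delta)$, as a connected chamber system over $J$: for $\overline{x}=(x_+,x_-)\in\overline{R}$ and $s\in J$ the entire $s$-panel of $\overline{x}$ in $\Opp(\Delta)$ is contained in $\overline{R}$ (since $\P_s(x_\pm)\subseteq R_\pm$), while for $s\notin J$ no chamber of $\overline{R}$ other than $\overline{x}$ is $s$-adjacent to it; non-emptiness is Lemma~\ref{Mu97Lemma3.3}, and connectedness follows from thickness together with the structure of opposite spherical residues recorded in Lemmas~\ref{Mu97Lemma3.3}--\ref{Mu97Lemma3.5}. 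The whole construction will be obtained by propagating $\phi$ along galleries of $\overline{R}$, repeatedly applying Proposition~\ref{Mu97Proposition4.8}. Two observations will be used throughout: because $|J|\le 2$ one has $R_+\cup R_-\subseteq E_2(\overline{x})$ for every $\overline{x}\in\overline{R}$, and Proposition~\ref{Mu97Proposition4.8} preserves the restriction to $R_+\cup R_-$; hence every isometry arising from such a propagation restricts on $R_+\cup R_-$ to the fixed isometry $\phi|_{R_+\cup R_-}$.

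Granting the above, uniqueness is immediate: if $(\phi_{\overline{x}})_{\overline{x}\in\overline{R}}$ and $(\psi_{\overline{x}})_{\overline{x}\in\overline{R}}$ both satisfy (a) and (b), then along any gallery $(\overline{c}=\overline{x}_0,\dots,\overline{x}_n=\overline{x})$ in $\overline{R}$ condition~(b) and the uniqueness clause in Proposition~\ref{Mu97Proposition4.8} force, by induction on $i$, that $\phi_{\overline{x}_i}=\psi_{\overline{x}_i}$, starting from $\phi_{\overline{c}}=\phi=\psi_{\overline{c}}$. For existence I would, given $\overline{x}\in\overline{R}$ and a gallery $\gamma$ from $\overline{c}$ to $\overline{x}$ in $\overline{R}$, let $\phi_{\gamma}\colon E_2(\overline{x})\to E_2(\phi(\overline{x}))$ be the isometry obtained by iterating Proposition~\ref{Mu97Proposition4.8} along $\gamma$ (unambiguous once $\gamma$ is fixed). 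The existence statement then reduces to showing that $\phi_{\gamma}$ depends only on $\overline{x}$, equivalently that propagating $\phi$ around any closed gallery at $\overline{c}$ returns $\phi$. Once this is known one sets $\phi_{\overline{x}}:=\phi_{\gamma}$: property~(a) holds by construction, and property~(b) holds because for adjacent $\overline{x},\overline{y}$ one may take a gallery to $\overline{y}$ of the form ``gallery to $\overline{x}$, then the step $(\overline{x},\overline{y})$'', so that $\phi_{\overline{y}}$ is the Proposition~\ref{Mu97Proposition4.8}-extension of $\phi_{\overline{x}}$ across that adjacency and hence agrees with it on the intersection of the two domains; Lemma~\ref{ZusammengesetzteIsometrie} can be used to package such gluings.

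The heart of the argument, and what I expect to be the main obstacle, is thus the triviality of propagation around closed galleries. The plan is an induction on the length of a closed gallery $\gamma$ at $\overline{c}$. If $\gamma$ backtracks, or contains a segment lying in a single panel of $\overline{R}$ which can be replaced by a shorter one, one reduces its length using the uniqueness in Proposition~\ref{Mu97Proposition4.8}; the base of this reduction is that propagation around a closed gallery contained in one panel of $\overline{R}$ is trivial, which requires the rigidity of foundations --- an isometry of an $E_2(\overline{z})$ is determined by its restriction to a sufficiently large sub-configuration (Proposition~\ref{Mu97Proposition4.7} and Lemma~\ref{Mu97Lemma4.4}, where the hypothesis that the rank is at least $3$ enters) --- combined with the compatibility of the propagated isometries with the projections between the opposite spherical residues $R_+$ and $R_-$ (Lemma~\ref{Mu97Lemma3.5} and Lemmas~\ref{Mu97Lemma4.2}, \ref{Mu97Lemma4.3}). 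The genuinely difficult case is $|J|=2$: here the $m=2$ homotopy available in $\overline{R}$ is too coarse to help, since any two galleries of $\overline{R}$ with the same endpoints are already elementary homotopic, so one cannot split a closed gallery into panel loops. Instead one must analyse a taut closed gallery inside the rank-$2$ residue directly, compare the two isometries of $E_2(\overline{y})$ obtained by propagating $\phi$ along its two ``sides'', and show --- via the same rigidity and projection tools --- that they coincide. Once the independence of $\gamma$ is established, $\phi_{\overline{x}}$ is well defined and the theorem follows.
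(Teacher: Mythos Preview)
The paper does not give a self-contained proof of this theorem at all: its entire argument is the sentence ``This is a consequence of Proposition $(6.6)$ and Corollary $(6.7)$ in \cite{Ro00}.'' So there is no internal proof to compare against; what you have written is an attempt to reconstruct the content of Ronan's argument, and your overall framework (propagate $\phi$ along galleries in $\overline{R}$ via Proposition~\ref{Mu97Proposition4.8}, reduce existence to path-independence, derive uniqueness from the uniqueness clause of Proposition~\ref{Mu97Proposition4.8}) is indeed the natural one and is essentially how the result is organised in \cite{Ro00}.

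That said, there is a genuine gap at the crucial point. You correctly observe that $R_+\cup R_-\subseteq E_2(\overline{x})$ for every $\overline{x}\in\overline{R}$ and that each step of the propagation preserves the restriction to $R_+\cup R_-$. But this alone does not give path-independence: two isometries $E_2(\overline{c})\to E_2(\overline{c}')$ agreeing on $R_+\cup R_-$ need not agree on $E_2(c_+)$, since $E_2(c_+)$ contains the rank-$2$ residues $R_{J'}(c_+)$ for all $J'\neq J$, and $R_+\cap R_{J'}(c_+)$ is only a panel (or a single chamber) when $|J\cap J'|\leq 1$. Lemma~\ref{Mu97Lemma4.4} requires agreement on all of $E_1(c_+)$, which you have not secured for $t\notin J$. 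Your own diagnosis of the $|J|=2$ case is also slightly off: the problem is not that $2$-homotopy is ``too coarse'' --- $\overline{R}$ is trivially $2$-simply connected, being itself a rank-$2$ object --- but that invariance of the propagation under an elementary rank-$2$ homotopy is \emph{precisely} the statement to be proved (note that Lemma~\ref{Mu97Lemma5.2}$(ii)$ in the paper is deduced \emph{from} Theorem~\ref{Mu97Theorem4.9}, not the other way around). Your final paragraph acknowledges that one must ``analyse a taut closed gallery \dots\ directly'' and appeals to ``the same rigidity and projection tools'', but no concrete mechanism is given; this is exactly the substantive work carried out in \cite{Ro00}, and without it the argument is incomplete.
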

\begin{proof}
	This is a consequence of Proposition $(6.6)$ and Corollary $(6.7)$ in \cite{Ro00}.
\end{proof}

\subsection*{Using $\Opp(\Delta)$ to extend isometries}

Let $\overline{c} \in \overline{\C}, \overline{c}' \in \overline{\C'}, \phi: E_2(\overline{c}) \to E_2(\overline{c}')$ be an isometry and let $\overline{G} = (\overline{c} = \overline{x}_0, \ldots, \overline{x}_k = \overline{d})$ be a gallery in $\Opp(\Delta)$. Then - by Proposition \ref{Mu97Proposition4.8} - we obtain recursively a unique chamber $\overline{d}'_{\phi, \overline{G}}$ and a unique isometry $\phi_{\overline{d}, \overline{G}}: E_2(\overline{d}) \to E_2(\overline{d}'_{\phi, \overline{G}})$.

\begin{lemma}\label{Mu97Lemma5.2}
	The following hold:
	\begin{enumerate}[label=(\roman*), leftmargin=*]
		\item Given any gallery $\overline{G}$ starting at $\overline{c}$, then $\overline{c}'_{\phi, \overline{G} \, \overline{G}^{-1}} = \overline{c}'$ and $\phi_{\overline{c}, \overline{G} \, \overline{G}^{-1}} = \phi$.
		
		\item Given any closed gallery $\overline{G}$ in a rank $2$ residue of $\overline{c}$, then $\overline{c}'_{\phi, \overline{G}} = \overline{c}'$ and $\phi_{\overline{c}, \overline{G}} = \phi$.
		
		\item If two galleries $\overline{G}, \overline{H}$ joining $\overline{c}$ and $\overline{d}$ are homotopic, then $\overline{d}'_{\phi, \overline{G}} = \overline{d}'_{\phi, \overline{H}}$ and $\phi_{\overline{d}, \overline{G}} = \phi_{\overline{d}, \overline{H}}$.
	\end{enumerate}
\end{lemma}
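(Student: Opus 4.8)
The plan is to prove all three statements by appealing to the recursive construction of $\overline{d}'_{\phi,\overline{G}}$ and $\phi_{\overline{d},\overline{G}}$ together with Propositions \ref{Mu97Proposition4.8} and \ref{Mu97Proposition4.7} and Theorem \ref{Mu97Theorem4.9}, with (iii) being the heart of the matter and (i), (ii) serving as warm-ups that feed into it.

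First I would record a uniqueness principle that underlies everything: if $\overline{G}$ is any gallery from $\overline{c}$ to $\overline{d}$, then by construction $\phi_{\overline{d},\overline{G}}$ is the unique isometry $E_2(\overline{d})\to E_2(\overline{d}'_{\phi,\overline{G}})$ that agrees with the previously constructed isometry on each intersection along the chain $\overline{x}_0,\ldots,\overline{x}_k$; by Proposition \ref{Mu97Proposition4.8} such an isometry exists and is uniquely determined at each step, so $\overline{d}'_{\phi,\overline{G}}$ and $\phi_{\overline{d},\overline{G}}$ depend only on the sequence of chambers, not on any further choice. For (i), I would argue by induction on the length of $\overline{G}$. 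If $\overline{G}=(\overline{c})$ there is nothing to prove. Otherwise write $\overline{G}=\overline{F}\,(\overline{x}_{k-1},\overline{x}_k)$; then $\overline{G}\,\overline{G}^{-1}=\overline{F}\,(\overline{x}_{k-1},\overline{x}_k,\overline{x}_{k-1})\,\overline{F}^{-1}$. Traversing the edge $(\overline{x}_{k-1},\overline{x}_k)$ produces $\phi_{\overline{x}_k,\overline{F}(\overline{x}_{k-1},\overline{x}_k)}$, and then traversing it back produces, by the uniqueness clause in Proposition \ref{Mu97Proposition4.8}, an isometry on $E_2(\overline{x}_{k-1})$ agreeing with $\phi_{\overline{x}_{k-1},\overline{F}}$ on the intersection of domains — which is $\phi_{\overline{x}_{k-1},\overline{F}}$ itself, so the forward and backward steps cancel and we are left exactly with the data produced by $\overline{F}\,\overline{F}^{-1}$; the inductive hypothesis then gives $\overline{c}'_{\phi,\overline{F}\overline{F}^{-1}}=\overline{c}'$ and $\phi_{\overline{c},\overline{F}\overline{F}^{-1}}=\phi$.

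For (ii), let $\overline{G}$ be a closed gallery inside a rank $2$ residue $\overline{R}$ of $\overline{c}$, of type $J$ with $|J|\le 2$. Here Theorem \ref{Mu97Theorem4.9} applies: there is a unique system $(\phi_{\overline{x}})_{\overline{x}\in\overline{R}}$ extending $\phi$ and compatible along adjacencies in $\Opp(\Delta)$. I would check, again by induction along $\overline{G}$, that the recursively constructed $\phi_{\overline{x}_i,\overline{G}_i}$ (where $\overline{G}_i$ is the initial segment) coincides with $\phi_{\overline{x}_i}$ from that system — because at each step both are the unique isometry on $E_2(\overline{x}_i)$ agreeing with $\phi_{\overline{x}_{i-1}}$ on the intersection, by Proposition \ref{Mu97Proposition4.8} and clause (b) of Theorem \ref{Mu97Theorem4.9}. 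Since $\overline{G}$ is closed, $\overline{x}_k=\overline{c}$, and we conclude $\phi_{\overline{c},\overline{G}}=\phi_{\overline{c}}=\phi$, whence also $\overline{c}'_{\phi,\overline{G}}=\overline{c}'$.

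For (iii), I would reduce homotopy to its two generating moves. Two galleries $\overline{G},\overline{H}$ from $\overline{c}$ to $\overline{d}$ are homotopic iff they are connected by a finite sequence of elementary homotopies, so by transitivity it suffices to treat the case $\overline{G}=\overline{X}\,\overline{G}_0\,\overline{Y}$, $\overline{H}=\overline{X}\,\overline{H}_0\,\overline{Y}$ with $\overline{G}_0,\overline{H}_0$ both $J$-galleries, $|J|\le2$, sharing endpoints, say from $\overline{p}$ to $\overline{q}$. Applying the construction along $\overline{X}$ gives a common chamber $\overline{p}'$ and isometry $\phi_{\overline{p},\overline{X}}=:\eta:E_2(\overline{p})\to E_2(\overline{p}')$. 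Now both $\overline{G}_0$ and $\overline{H}_0$ lie in the rank $\le2$ residue $\overline{R}'$ of $\overline{p}$ determined by $J$, so by the same argument as in (ii) — comparing with the unique system furnished by Theorem \ref{Mu97Theorem4.9} applied to $\eta$ on $\overline{R}'$ — the data produced at $\overline{q}$ by traversing $\overline{G}_0$ equals that produced by traversing $\overline{H}_0$: both yield $\overline{q}'=\eta$-image-data at $\overline{q}$, namely the member $\eta_{\overline{q}}$ of that system, with a single target chamber. Finally, continuing along the common tail $\overline{Y}$ from this common $(\overline{q}',\eta_{\overline{q}})$ produces the same $\overline{d}'$ and the same $\phi_{\overline{d}}$ in either case. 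The main obstacle is the middle step: one must be careful that the target chamber reached after $\overline{G}_0$ really is independent of the $J$-gallery chosen within the residue, and this is exactly where Theorem \ref{Mu97Theorem4.9} (with its uniqueness of the whole system on $\overline{R}'$) is indispensable — it is not enough to know Proposition \ref{Mu97Proposition4.8} step by step, since a priori different $J$-galleries could accumulate different targets; the theorem guarantees they do not.
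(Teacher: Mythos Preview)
Your proposal is correct and follows the same route as the paper: part (i) from the uniqueness clause of Proposition \ref{Mu97Proposition4.8}, part (ii) from Theorem \ref{Mu97Theorem4.9}, and part (iii) by reduction to an elementary homotopy handled by the same tools. Two minor remarks: the reference to Proposition \ref{Mu97Proposition4.7} in your opening plan is unnecessary (you never use it), and your phrase ``two generating moves'' is slightly off for this paper's definition of homotopy, which has only the single rank-$\le 2$ substitution move --- but your actual argument treats exactly that move, so no harm is done.
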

\begin{proof}
	Part $(i)$ follows from the uniqueness assertion in Proposition \ref{Mu97Proposition4.8}; part $(ii)$ follows from Theorem \ref{Mu97Theorem4.9}, and part $(iii)$ is a consequence of $(i)$ and $(ii)$.
\end{proof}

\begin{proposition}\label{Mu97Proposition5.3}
	Let $\overline{\mathcal{X}} \subseteq \overline{\C}$ be simply connected and suppose that $\overline{c} \in \overline{\mathcal{X}}$. Then there exists a mapping $\overline{\phi}: \overline{\mathcal{X}} \to \overline{\C'}$ and a system of isometries $\left( \phi_{\overline{x}}: E_2(\overline{x}) \to E_2(\overline{\phi}(\overline{x})) \right)_{\overline{x} \in \overline{\mathcal{X}}}$ such that $\phi_{\overline{c}} = \phi$ and such that $\phi_{\overline{x}}$ and $\phi_{\overline{y}}$ agree on the intersection of their domains for any two adjacent chambers $\overline{x}, \overline{y} \in \overline{\mathcal{X}}$. The mapping $\overline{\phi}$ and the family of  isometries $\phi_{\overline{x}}$ are uniquely determined by these properties.
\end{proposition}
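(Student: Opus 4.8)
The plan is to transport $\phi$ along galleries of $\Opp(\Delta)$ and to use the simple connectedness of $\overline{\mathcal{X}}$ to verify that the result is independent of the chosen gallery. For $\overline{x} \in \overline{\mathcal{X}}$ I would use the connectedness of $\overline{\mathcal{X}}$ to choose a gallery $\overline{G}$ lying in $\overline{\mathcal{X}}$ and joining $\overline{c}$ and $\overline{x}$, and then set $\phi_{\overline{x}} := \phi_{\overline{x},\overline{G}}$ and $\overline{\phi}(\overline{x}) := \overline{x}'_{\phi,\overline{G}}$, where $\phi_{\overline{x},\overline{G}}$ and $\overline{x}'_{\phi,\overline{G}}$ are produced by the recursive procedure introduced before Lemma \ref{Mu97Lemma5.2}; note that this procedure only uses Proposition \ref{Mu97Proposition4.8} and hence makes sense for every gallery of $\Opp(\Delta)$, whether or not it lies in $\overline{\mathcal{X}}$. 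Choosing the length-$0$ gallery $(\overline{c})$ for $\overline{x} = \overline{c}$ then gives $\phi_{\overline{c}} = \phi$.

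To see that $\overline{\phi}$ and the $\phi_{\overline{x}}$ are well defined, let $\overline{G}$ and $\overline{H}$ be two galleries lying in $\overline{\mathcal{X}}$ and joining $\overline{c}$ and $\overline{x}$. Then $\overline{G}\,\overline{H}^{-1}$ is a closed gallery at $\overline{c}$ lying in $\overline{\mathcal{X}}$, hence null-homotopic since $\overline{\mathcal{X}}$ is simply connected; as a homotopy inside $\overline{\mathcal{X}}$ is in particular a homotopy in $\Opp(\Delta)$, and as homotopy is compatible with left and right concatenation (which is immediate from the definition of elementary homotopy), the galleries $\overline{G} = \overline{G}\,\overline{H}^{-1}\,\overline{H}$ and $\overline{H}$ are homotopic in $\Opp(\Delta)$. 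Lemma \ref{Mu97Lemma5.2}$(iii)$ then yields $\overline{x}'_{\phi,\overline{G}} = \overline{x}'_{\phi,\overline{H}}$ and $\phi_{\overline{x},\overline{G}} = \phi_{\overline{x},\overline{H}}$. I expect this to be the only point that requires genuine care — namely the step from ``null-homotopic in the restricted chamber system'' to ``homotopic in $\Opp(\Delta)$, so that Lemma \ref{Mu97Lemma5.2}$(iii)$ applies''; once this is granted, the remaining properties are bookkeeping.

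Next I would check compatibility on adjacent chambers. If $\overline{x}, \overline{y} \in \overline{\mathcal{X}}$ are adjacent, choose a gallery $\overline{G}$ lying in $\overline{\mathcal{X}}$ and joining $\overline{c}$ and $\overline{x}$, and consider the gallery $\overline{G}(\overline{x},\overline{y})$, which lies in $\overline{\mathcal{X}}$ and joins $\overline{c}$ and $\overline{y}$. By the very construction of the recursive extension and the uniqueness part of Proposition \ref{Mu97Proposition4.8}, $\phi_{\overline{y},\overline{G}(\overline{x},\overline{y})}$ is the unique isometry on $E_2(\overline{y})$ agreeing with $\phi_{\overline{x},\overline{G}}$ on the intersection of the two domains; together with well-definedness this shows that $\phi_{\overline{x}}$ and $\phi_{\overline{y}}$ agree on $E_2(\overline{x}) \cap E_2(\overline{y})$.

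Finally, for uniqueness, suppose $\big(\overline{\psi},(\psi_{\overline{x}})_{\overline{x} \in \overline{\mathcal{X}}}\big)$ is a second system with the stated properties. Fixing $\overline{x}$, choose a gallery $(\overline{c} = \overline{x}_0, \ldots, \overline{x}_k = \overline{x})$ lying in $\overline{\mathcal{X}}$ and prove $\psi_{\overline{x}_i} = \phi_{\overline{x}_i}$ by induction on $i$: for $i = 0$ this is $\psi_{\overline{c}} = \phi = \phi_{\overline{c}}$, and for the inductive step both $\psi_{\overline{x}_i}$ and $\phi_{\overline{x}_i}$ agree on $E_2(\overline{x}_{i-1}) \cap E_2(\overline{x}_i)$ with $\psi_{\overline{x}_{i-1}} = \phi_{\overline{x}_{i-1}}$, so they coincide by the uniqueness part of Proposition \ref{Mu97Proposition4.8}. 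In particular $\psi_{\overline{x}} = \phi_{\overline{x}}$, and since an isometry with domain $E_2(\overline{x})$ determines its target chamber of $\overline{\C'}$ (the rank being at least $3$), also $\overline{\psi}(\overline{x}) = \overline{\phi}(\overline{x})$, which completes the argument.
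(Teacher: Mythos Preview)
Your proof is correct and follows essentially the same strategy as the paper's: define $\phi_{\overline{x}}$ via the recursive transport of $\phi$ along a gallery in $\overline{\mathcal{X}}$, invoke simple connectedness together with Lemma~\ref{Mu97Lemma5.2}$(iii)$ for well-definedness, check compatibility on adjacent chambers by extending a gallery by one step, and obtain uniqueness by induction along a gallery using Proposition~\ref{Mu97Proposition4.8}. The paper's write-up is terser (it simply asserts that two galleries with the same endpoints in a simply connected chamber system are homotopic and leaves the uniqueness induction implicit), while you spell out the passage from null-homotopy of $\overline{G}\,\overline{H}^{-1}$ to homotopy of $\overline{G}$ and $\overline{H}$ and the inductive uniqueness argument in full; these are genuine clarifications but not a different approach.
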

\begin{proof}
	As $\overline{\mathcal{X}}$ is simply connected it is connected by definition. Given $\overline{x} \in \overline{\mathcal{X}}$ we obtain for each gallery $\overline{G}$ joining $\overline{c}$ and $\overline{x}$ a unique chamber $\overline{x}'_{\phi, \overline{G}}$ and an isometry $\phi_{\overline{x}, \overline{G}}: E_2(\overline{x}) \to E_2(\overline{x}'_{\phi, \overline{G}})$. It follows by part $(iii)$ of the previous Lemma that $\overline{x}'_{\overline{G}} = \overline{x}'_{\overline{H}}$ for any two galleries $\overline{G}, \overline{H}$ from $\overline{c}$ to $\overline{x}$ because $\overline{\mathcal{X}}$ is simply connected. Thus we obtain a mapping $\overline{\phi}$ and a system of isometries $\left( \phi_{\overline{x}} \right)_{\overline{x} \in \overline{\mathcal{X}}}$.
		
	Let $\overline{x}, \overline{y} \in \overline{\mathcal{X}}$ be adjacent. By considering a gallery joining $\overline{c}$ and $\overline{x}$ which passes through $\overline{y}$ it is seen that it follows by construction that $\phi_{\overline{x}}$ and $\phi_{\overline{y}}$ agree on the intersection of their domains.
	
	The uniqueness of $\overline{\phi}$ and $\left( \phi_{\overline{x}} \right)_{\overline{x} \in \overline{\mathcal{X}}}$ follows from the uniqueness assertion of Proposition \ref{Mu97Proposition4.8} and an obvious induction.
\end{proof}

\section{Retractions}

\begin{convention}
	For the rest of this paper let $(W, S)$ be $2$-spherical and of rank at least $3$. Furthermore, let $\Delta$ be thick and let $\Delta' = (\Delta_+', \Delta_-', \delta_*')$ be a thick twin building of type $(W, S)$. We define $\C', \Delta_+', \Delta_-', \delta', \ell'$ as in the case of $\Delta$.
\end{convention}

\subsection*{$\pi$-retractions}

Let $c\in \C_-$, let $\Sigma \subseteq \C_-$ be an apartment of $\Delta_-$ containing $c$ and put $\gamma := (c, \Sigma)$. Then we define the mapping $\pi_{\gamma}: \C_+ \to \Sigma$ via $\delta_-(c, \pi_{\gamma}(x)) = \delta_*(c, x)$ for all $x\in \C_+$ and we put $\Pi_{\gamma} := \{ (x, \pi_{\gamma}(x)) \mid x\in \C_+ \}$.

\begin{lemma}\label{pi123}
	Let $\gamma = (c, \Sigma)$ be as above, then the following hold:
	\begin{enumerate}[label=($\pi$\arabic*), leftmargin=*]
		\item $\pi_{\gamma}$ preserves $s$-adjacency.
		
		\item The chamber $\pi_{\gamma}(x)$ is opposite to $x$ for each chamber $x\in \C_+$.
		
		\item Given $x\in \C_+$, then $c \in A(x, \pi_{\gamma}(x))$.
	\end{enumerate}
\end{lemma}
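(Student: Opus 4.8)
The plan is to prove $(\pi1)$ first, then $(\pi2)$, and deduce $(\pi3)$ formally from $(\pi2)$. ($\pi_\gamma$ is well defined since $\sigma_c\colon\Sigma\to W$ is bijective.) The common tool is the following remark on how the codistance varies along a panel: if $\eta\in\{+,-\}$, $c\in\C_{-\eta}$ and $a,b$ lie in a common $s$-panel $P$ of $\Delta_\eta$, then $\delta_*(c,b)\in\{\delta_*(c,a),\,\delta_*(c,a)s\}$. This follows from the codistance projection formula $\delta_*(c,q)=\delta_*(c,\proj_P c)\,\delta_\eta(\proj_P c,q)$ (available since $\{s\}$ is spherical) applied to $q=a$ and $q=b$, together with $\delta_\eta(\proj_P c,q)\in\langle s\rangle$; hence $\delta_*(c,a)^{-1}\delta_*(c,b)\in\langle s\rangle$. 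Iterating this along a minimal gallery in $\Delta_-$ yields the estimate $\lvert\ell_*(c,x)-\ell_*(c',x)\rvert\le\ell_-(c,c')$ for all $c,c'\in\C_-$ and $x\in\C_+$; this will be used once in the proof of $(\pi2)$.

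For $(\pi1)$, let $a,b\in\C_+$ be $s$-adjacent; the case $a=b$ is trivial, so assume $\delta_+(a,b)=s$. By the panel remark, $\delta_-(c,\pi_\gamma(b))=\delta_*(c,b)$ equals $\delta_-(c,\pi_\gamma(a))$ or $\delta_-(c,\pi_\gamma(a))s$. In the first case $\pi_\gamma(a)=\pi_\gamma(b)$ by injectivity of $\sigma_c$ on $\Sigma$. In the second case, let $q$ be the chamber of $\Sigma$ in the $s$-panel of $\pi_\gamma(a)$ with $q\neq\pi_\gamma(a)$ (unique by thinness of $\Sigma$); then $\delta_-(\pi_\gamma(a),q)=s$, so (Bu$2$) gives $\delta_-(c,q)\in\{\delta_-(c,\pi_\gamma(a)),\,\delta_-(c,\pi_\gamma(a))s\}$, and injectivity of $\sigma_c$ on $\Sigma$ forces $\delta_-(c,q)=\delta_-(c,\pi_\gamma(a))s=\delta_-(c,\pi_\gamma(b))$, whence $q=\pi_\gamma(b)$. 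Thus $\pi_\gamma(b)$ is $s$-adjacent to $\pi_\gamma(a)$.

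For $(\pi2)$ I would induct on $n:=\ell_*(c,x)$. If $n=0$, then $\delta_*(c,x)=1_W$, so $\pi_\gamma(x)=c$ by (Bu$1$) and $c\in x^{op}$, as wanted. If $n\ge1$, choose $s\in S$ with $\ell(\delta_*(c,x)s)=n-1$ and use (Tw$3$) to get $x'\in\C_+$ with $\delta_+(x,x')=s$ and $\delta_*(c,x')=\delta_*(c,x)s$; then $\ell_*(c,x')=n-1$, so by induction $\delta_*(x',\pi_\gamma(x'))=1_W$. By $(\pi1)$, $\pi_\gamma(x)$ and $\pi_\gamma(x')$ are $s$-adjacent and distinct (their $\sigma_c$-images differ), so $\delta_-(\pi_\gamma(x'),\pi_\gamma(x))=s$. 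Applying the panel remark in $\Delta_+$ to $\pi_\gamma(x')\in\C_-$ and the $s$-panel containing $x$ and $x'$ gives $\delta_*(\pi_\gamma(x'),x)\in\{\delta_*(\pi_\gamma(x'),x'),\delta_*(\pi_\gamma(x'),x')s\}=\{1_W,s\}$; and $1_W$ is excluded since the estimate above gives $\ell_*(\pi_\gamma(x'),x)\ge\ell_*(c,x)-\ell_-(c,\pi_\gamma(x'))=n-(n-1)=1$. Hence $\delta_*(x,\pi_\gamma(x'))=s$, and since $\ell(s\cdot s)=\ell(s)-1$, axiom (Tw$2$) applied with $x\in\C_+$, $y:=\pi_\gamma(x')$ and $z:=\pi_\gamma(x)$ gives $\delta_*(x,\pi_\gamma(x))=s\cdot s=1_W$, i.e., $\pi_\gamma(x)\in x^{op}$. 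Finally, $(\pi3)$ is immediate: by $(\pi2)$ the twin apartment $A(x,\pi_\gamma(x))$ is defined and equals $\{z\in\C\mid\delta(z,x)=\delta(z,\pi_\gamma(x))\}$, and $\delta(c,x)=\delta_*(c,x)=\delta_-(c,\pi_\gamma(x))=\delta(c,\pi_\gamma(x))$ by the definition of $\pi_\gamma$, so $c\in A(x,\pi_\gamma(x))$.

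The step I expect to be the main obstacle is excluding $\delta_*(x,\pi_\gamma(x'))=1_W$ in the inductive step of $(\pi2)$: a priori both $x$ and $x'$ could be opposite to $\pi_\gamma(x')$, and it is exactly to rule this out that the Lipschitz-type estimate $\lvert\ell_*(c,x)-\ell_*(c',x)\rvert\le\ell_-(c,c')$ coming from the panel remark is needed. The remaining ingredients are routine manipulations of the axioms (Bu$1$), (Bu$2$) and (Tw$1$)--(Tw$3$) together with thinness of $\Sigma$ and bijectivity of $\sigma_c$ on an apartment.
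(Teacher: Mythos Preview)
Your argument is correct. The paper itself does not prove $(\pi1)$ and $(\pi2)$ but simply quotes Lemma~(7.1) of \cite{Ro00}; for $(\pi3)$ it gives exactly your one-line observation that $\delta_-(c,\pi_\gamma(x))=\delta_*(c,x)$ together with the description $A(x,y)=\{z\mid\delta(z,x)=\delta(z,y)\}$. So your write-up is strictly more self-contained than the paper's. Your treatment of $(\pi1)$ is the standard one. For $(\pi2)$ your induction on $\ell_*(c,x)$ via a neighbour $x'\in\C_+$ is sound, and the Lipschitz bound $\lvert\ell_*(c,x)-\ell_*(c',x)\rvert\le\ell_-(c,c')$ does the job of excluding $\delta_*(x,\pi_\gamma(x'))=1_W$; note however that a shorter route is available and is in fact the one the paper uses for the companion statement $(\omega2)$: with $w:=\delta_*(c,x)$ one has $\delta_*(x,c)=w^{-1}$, and walking along a minimal gallery in $\C_-$ from $c$ to $\pi_\gamma(x)$ (which has type a reduced word for $w$) one applies (Tw2) at every step, since the length drops each time, to obtain $\delta_*(x,\pi_\gamma(x))=w^{-1}w=1_W$ directly (this is Lemma~5.140 in \cite{AB08}). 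Your approach has the virtue of making the panel behaviour of $\delta_*$ explicit and reusable, at the cost of a slightly longer inductive step.
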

\begin{proof}
	The first two assertions are proved in Lemma $(7.1)$ in \cite{Ro00}. For the third assertion we notice that $\delta_-(c, \pi_{\gamma}(x)) = \delta_*(c, x)$ by definition and hence the claim follows.
\end{proof}

\begin{lemma}\label{Mu97Lemma3.6}
	Let $\gamma = (c, \Sigma)$ be as above, then the mapping $\C_+ \to \Pi_{\gamma}, x \mapsto (x, \pi_{\gamma}(x))$ is an $s$-adjacence preserving bijection. In particular, $\Pi_{\gamma}$ is a simply connected subset of $\overline{\C}$.
\end{lemma}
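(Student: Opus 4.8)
The plan is to recognise the map $\rho\colon \C_+ \to \Pi_{\gamma},\ x \mapsto (x,\pi_{\gamma}(x))$, as an isomorphism of chamber systems from $\Cbf(\Delta_+)$ onto $\Pi_{\gamma}$ (equipped with the adjacency relations induced from $\Opp(\Delta)$), and then to transport simple connectedness along it.

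First I would check that $\rho$ is a well-defined bijection onto $\Pi_{\gamma}$. By ($\pi$2) the chamber $\pi_{\gamma}(x)$ is opposite to $x$, so $\delta_*(x,\pi_{\gamma}(x)) = 1_W$ and therefore $(x,\pi_{\gamma}(x)) \in \overline{\C}$; hence $\Pi_{\gamma} \subseteq \overline{\C}$ and $\rho$ really is a map into $\overline{\C}$. It is surjective onto $\Pi_{\gamma}$ by the very definition of that set, and injective because the first coordinate of $\rho(x)$ is $x$. Next, for each $s\in S$: if $x \sim_s y$ in $\Cbf(\Delta_+)$, then $\pi_{\gamma}(x) \sim_s \pi_{\gamma}(y)$ in $\Cbf(\Delta_-)$ by ($\pi$1), so $\rho(x)$ and $\rho(y)$ are $s$-adjacent in $\Opp(\Delta)$ by the definition of that chamber system; conversely, if $\rho(x)$ and $\rho(y)$ are $s$-adjacent in $\Opp(\Delta)$, then their first coordinates are $s$-adjacent, i.e.\ $x \sim_s y$ in $\Cbf(\Delta_+)$. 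Thus $\rho$ is an $s$-adjacency preserving bijection, and since $\rho^{-1}$ also preserves $s$-adjacency, $\rho$ is an isomorphism of chamber systems from $\Cbf(\Delta_+)$ onto $\Pi_{\gamma}$.

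Finally, the notions of homotopy of galleries and of a null-homotopic closed gallery are defined solely in terms of the chamber-system structure, hence they are preserved under the isomorphism $\rho$; in particular $\Cbf(\Delta_+)$ is simply connected if and only if $\Pi_{\gamma}$ is. Since $\Cbf(\Delta_+)$ is simply connected by Proposition \ref{Mu97Proposition2.2}, it follows that $\Pi_{\gamma}$ is a simply connected subset of $\overline{\C}$. There is no genuine obstacle here: all the substance is already packaged in ($\pi$1) and ($\pi$2) of Lemma \ref{pi123}. The one point worth a second glance is that $\rho$ \emph{reflects} $s$-adjacency, and this is automatic precisely because the first coordinate of $\rho(x)$ equals $x$ while adjacency in $\Opp(\Delta)$ is tested coordinatewise.
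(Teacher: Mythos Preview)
Your proof is correct and follows exactly the approach of the paper: the first assertion is deduced from ($\pi$1) and ($\pi$2) of Lemma~\ref{pi123}, and the simple connectedness of $\Pi_{\gamma}$ is then obtained from Proposition~\ref{Mu97Proposition2.2} via the chamber-system isomorphism $\rho$. The paper's proof is just a two-line pointer to these same ingredients, whereas you have spelled out the details (including the reflection of $s$-adjacency), but the strategy is identical.
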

\begin{proof}
	The first statement is immediate from Lemma \ref{pi123}. The second follows from Proposition \ref{Mu97Proposition2.2}.
\end{proof}

\subsection*{$\omega$-retractions}

Let $\overline{c} := (c_+, c_-)$ be a pair of opposite chambers and let $\Sigma = A_-(c_+, c_-)$. Then we define the mapping $\omega_{\overline{c}}: \C_+ \to \Sigma$ via $\delta_-(c_-, \omega_{\overline{c}}(x)) = \delta_+(c_+, x)$ for all $x\in \C_+$. Furthermore we set $\Omega_{\overline{c}} := \{ (x, \omega_{\overline{c}}(x)) \mid x \in \C_+ \}$. A gallery $(\overline{x} = \overline{x}_0, \ldots, \overline{x}_k = \overline{y})$ in $\Opp(\Delta)$ will be called \textit{$\omega$-gallery} if there exists a chamber $\overline{c} \in \overline{\C}$ such that $\overline{x}_{\lambda} \in \Omega_{\overline{c}}$ for each $0 \leq \lambda \leq k$.

\begin{lemma}\label{omega123}
	Let $\overline{c} \in \overline{\C}$. Then the following hold:
	\begin{enumerate}[label=($\omega$\arabic*), leftmargin=*]
		\item $\omega_{\overline{c}}$ preserves $s$-adjacency.
		
		\item The chamber $\omega_{\overline{c}}(x)$ is opposite to $x$ for each chamber $x\in \C_+$.
		
		\item Given $x\in \C_+$, then $c_+ \in A(x, \omega_{\overline{c}}(x))$.
	\end{enumerate}
\end{lemma}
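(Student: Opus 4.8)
The plan is to derive all three statements from one identity about the twin apartment $\Sigma := A_-(c_+, c_-)$, after which only $(\omega 2)$ requires genuine work. Throughout I write $\overline{c} = (c_+, c_-)$ and, for $x \in \C_+$, put $u := \omega_{\overline{c}}(x)$ and $w := \delta_+(c_+, x)$, so that $\delta_-(c_-, u) = w$ by the definition of $\omega_{\overline{c}}$. The identity I would first record is that $\delta_*(c_+, z) = \delta_-(c_-, z)$ for every $z \in \Sigma$: since $z \in \Sigma \subseteq A(c_+, c_-) = \{ y \in \C \mid \delta(y, c_+) = \delta(y, c_-) \}$, one has $\delta_*(z, c_+) = \delta_-(z, c_-)$, and inverting (together with (Tw1)) gives the claim. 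Applied to $z = u$, this yields $\delta_*(c_+, u) = w$.

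For $(\omega 2)$ I would fix a minimal gallery $(c_+ = y_0, y_1, \dots, y_n = x)$ in $\Delta_+$ and set $s_i := \delta_+(y_{i-1}, y_i) \in S$, so that $w = s_1 \cdots s_n$ is reduced; by (Tw1) and the identity above, $\delta_*(u, c_+) = w^{-1} = s_n \cdots s_1$. Then I would show by induction on $i$ that $\delta_*(u, y_i) = s_n s_{n-1} \cdots s_{i+1}$. In the inductive step $\delta_*(u, y_{i-1}) = s_n \cdots s_i$ is reduced of length $n - i + 1$, and right multiplication by $s_i = \delta_+(y_{i-1}, y_i)$ shortens it, so (Tw2) — with $\epsilon = -$, applied to $u \in \C_-$ and the $s_i$-adjacent pair $y_{i-1}, y_i \in \C_+$ — gives $\delta_*(u, y_i) = s_n \cdots s_{i+1}$. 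Taking $i = n$ yields $\delta_*(u, x) = 1_W$, i.e. $u \in x^{op}$, which is $(\omega 2)$. The one point to verify with care is that the length hypothesis of (Tw2) genuinely holds at every step; this amounts to the statement that the reduced word $w^{-1}$ unwinds, so it is routine.

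Given $(\omega 2)$, the other two parts are formal. For $(\omega 3)$: $x$ and $u$ are now opposite, so $A(x, u)$ is defined, and $c_+ \in A(x, u) = \{ y \mid \delta(y, x) = \delta(y, u) \}$ is equivalent to $\delta_+(c_+, x) = \delta_*(c_+, u)$, which is exactly the identity above. For $(\omega 1)$: if $x, y \in \C_+$ are $s$-adjacent, then (Bu2) gives $\delta_+(c_+, y) \in \{ w, ws \}$, hence $\delta_-(c_-, \omega_{\overline{c}}(y)) \in \{ \delta_-(c_-, u), \delta_-(c_-, u) s \}$; on the other hand, the $s$-panel $P$ of $\Delta_-$ through $u$ meets $\Sigma$, so by convexity $\proj_P c_- \in P \cap \Sigma$, and the two chambers of $P \cap \Sigma$ have precisely the $\delta_-$-distances $\{ \delta_-(c_-, u), \delta_-(c_-, u) s \}$ from $c_-$. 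Since $\sigma_{c_-}$ is injective on the apartment $\Sigma$ and $\omega_{\overline{c}}(y) \in \Sigma$, it follows that $\omega_{\overline{c}}(y) \in P$, i.e. $\omega_{\overline{c}}(x)$ and $\omega_{\overline{c}}(y)$ are $s$-adjacent.

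The main obstacle is $(\omega 2)$: the other two parts drop out once the twin apartment identity is available. This mirrors the treatment of $(\pi 1)$--$(\pi 3)$ in Lemma \ref{pi123}, the difference being that there the retraction is defined through the codistance $\delta_*(c, \cdot)$, whereas $\omega_{\overline{c}}$ is defined through the building distance $\delta_+(c_+, \cdot)$; that is exactly why the identity $\delta_*(c_+, \cdot) = \delta_-(c_-, \cdot)$ on $\Sigma$ is needed to bridge the two descriptions.
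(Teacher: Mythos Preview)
Your proof is correct and follows essentially the same approach as the paper. Both hinge on the identity $\delta_*(c_+,z)=\delta_-(c_-,z)$ for $z\in\Sigma$, and your arguments for $(\omega 1)$ and $(\omega 3)$ match the paper's almost verbatim; the only difference is that for $(\omega 2)$ the paper invokes \cite[Lemma~5.140]{AB08} to pass directly from $\delta_*(u,c_+)=w^{-1}$ and $\delta_+(c_+,x)=w$ to $\delta_*(u,x)=1_W$, whereas you unwind that lemma by hand via the (Tw2) induction along a minimal gallery.
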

\begin{proof}
	Let $x, y \in \C_+$ and $s\in S$ such that $x, y$ are $s$-adjacent, and let $w\in W$ such that $\delta_+(c_+, x) = w$. Then $\delta_+(c_+, y) \in \{ w, ws \}$ by (Bu$2$). If $\delta_+(c_+, y) = w$ then $\delta_-(c_-, \omega_{\overline{c}}(x)) = \delta_-(c_-, \omega_{\overline{c}}(y))$. Since $c_-, \omega_{\overline{c}}(x), \omega_{\overline{c}}(y) \in \Sigma$ we obtain $\omega_{\overline{c}}(x) = \omega_{\overline{c}}(y)$. Now we assume that $\delta_+(c_+, y) = ws$. Let $P$ be the $s$-panel containing $\omega_{\overline{c}}(y)$. Since $\omega_{\overline{c}}(y) \in P \cap \Sigma$ we obtain $\vert P \cap \Sigma \vert = 2$ because any apartment is thin. Let $\omega_{\overline{c}}(y) \neq y' \in P \cap \Sigma$. Using (Bu$2$) we obtain $\delta_-(c_-, y') \in \{ w, ws \}$. Since $c_-, y', \omega_{\overline{c}}(x) \in \Sigma$ and $\delta_-(c_-, y') = \delta_-(c_-, \omega_{\overline{c}}(x))$, we obtain $y' = \omega_{\overline{c}}(x)$ as above. Thus $\omega_{\overline{c}}$ preserves $s$-adjacency.
	
	Let $x\in \C_+$ and $w\in W$ such that $\delta_+(x, c_+) = w$. Then $\delta_-(c_-, \omega_{\overline{c}}(x)) = \delta_+(c_+, x) = w^{-1}$. Since $\omega_{\overline{c}}(x) \in A(c_+, c_-)$ we have $\delta_*(\omega_{\overline{c}}(x), c_+) = \delta_-(\omega_{\overline{c}}(x), c_-) = w$. Now we have $\delta_*(\omega_{\overline{c}}(x), x) = ww^{-1} = 1_W$ by Lemma $5.140$ of \cite{AB08} and the claim follows.
	
	Let $x\in \C_+$. Since $\omega_{\overline{c}} \in A(c_+, c_-)$ we obtain $\delta_*(c_+, \omega_{\overline{c}}(x)) = \delta_-(c_-, \omega_{\overline{c}}(x))$. Furthermore, we have $\delta_+(c_+, x) = \delta_-(c_-, \omega_{\overline{c}}(x))$. Combining these two facts we obtain $c_+ \in A(x, \omega_{\overline{c}}(x))$ as required.
\end{proof}

\begin{lemma}\label{Mu97Lemma6.2}
	Let $P$ be an $s$-panel in $\Delta_+$, let $x, y \in P$ be such that $\ell_+(c_+, y) = \ell_+(c_+, x) +1$ and let $Q$ denote the $s$-panel of $\Delta_-$ containing $\omega_{\overline{c}}(x)$ and $\omega_{\overline{c}}(y)$. Then the following hold:
	\begin{enumerate}[label=(\roman*), leftmargin=*]
		\item $\proj_P c_+ = x$;
		
		\item $\proj_Q c_+ = \omega_{\overline{c}}(y)$;
		
		\item $\proj_P \omega_{\overline{c}}(y) = x$;
		
		\item $\proj_Q x = \omega_{\overline{c}}(y)$.
	\end{enumerate}
\end{lemma}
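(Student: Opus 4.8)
The plan is to reduce all four assertions to one codistance identity. Write $w:=\delta_+(c_+,x)$. Since $x$ and $y$ both lie in the $s$-panel $P$ and $\ell_+(c_+,y)=\ell_+(c_+,x)+1$, a standard property of projections onto panels shows immediately that $x=\proj_P c_+$, that $\delta_+(c_+,y)=ws$, and that $\ell(ws)=\ell(w)+1$; this is assertion (i). Moreover, by ($\omega$2) the chamber $x\in P$ is opposite to $\omega_{\overline{c}}(x)\in Q$, so $P$ and $Q$ are opposite $\{s\}$-residues, and $r_{\{s\}}=s$. I would then deduce (ii) from (i), the identity $\delta_*(x,\omega_{\overline{c}}(y))=s$ from (i) and (ii), and finally (iii) and (iv) from that identity via Lemma \ref{Mu97Lemma3.4}.

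For (ii), I would first note that $\omega_{\overline{c}}(y)\in A_-(c_+,c_-)\subseteq A(c_+,c_-)=\{z\in\C\mid \delta(z,c_+)=\delta(z,c_-)\}$, so that, using (Tw1) and the definition of $\omega_{\overline{c}}$,
\[ \delta_*(c_+,\omega_{\overline{c}}(y))=\delta_-(c_-,\omega_{\overline{c}}(y))=\delta_+(c_+,y)=ws, \]
and likewise $\delta_*(c_+,\omega_{\overline{c}}(x))=w$. Since $\proj_Q c_+$ is the unique chamber $p$ of the $s$-panel $Q$ with $\delta_*(c_+,z')=\delta_*(c_+,p)\,\delta_-(p,z')$ for all $z'\in Q$, every chamber of $Q$ other than $p$ has codistance $\delta_*(c_+,p)s$ from $c_+$, which has length $\ell_*(c_+,p)-1$; comparing this with the two values $w$ (of length $\ell(w)$) and $ws$ (of length $\ell(w)+1$) just computed forces $p=\omega_{\overline{c}}(y)$, which is (ii).

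To establish $\delta_*(x,\omega_{\overline{c}}(y))=s$, I would ``walk'' from $c_+$ to $x$ inside $\Delta_+$. Fix a reduced expression $w=s_1\cdots s_n$ and a minimal gallery $(c_+=e_0,e_1,\ldots,e_n=x)$ with $\delta_+(e_{i-1},e_i)=s_i$, and set $v_i:=\delta_*(e_i,\omega_{\overline{c}}(y))$, so $v_0=ws$ by the computation in the previous paragraph. Since $s_1\cdots s_n s$ is reduced (its length equals $\ell(w)+1$), each of its final segments $s_{i+1}\cdots s_n s$ is reduced as well; hence $\ell(s_i v_{i-1})=\ell(v_{i-1})-1$ at every step, and (Tw1) together with (Tw2) gives $v_i=s_i v_{i-1}$. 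An immediate induction then yields $v_i=s_{i+1}\cdots s_n s$ for all $i$, and in particular $v_n=\delta_*(x,\omega_{\overline{c}}(y))=s$.

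It then only remains to apply Lemma \ref{Mu97Lemma3.4} to the pair $(x,\omega_{\overline{c}}(y))\in P\times Q$ of opposite $\{s\}$-residues: since $\delta_*(x,\omega_{\overline{c}}(y))=s=r_{\{s\}}$, the lemma gives simultaneously $\proj_Q x=\omega_{\overline{c}}(y)$, which is (iv), and $\proj_P\omega_{\overline{c}}(y)=x$, which is (iii). I expect the main obstacle to be the codistance computation of the third step: one must verify that the length hypothesis of (Tw2) genuinely holds at each of the $n$ steps of the gallery, which is exactly what the reducedness of $s_1\cdots s_n s$ guarantees; everything else is bookkeeping.
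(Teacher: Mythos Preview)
Your proof is correct. Parts (i) and (ii) coincide with the paper's argument: both compute $\delta_*(c_+,\omega_{\overline c}(x))=w$ and $\delta_*(c_+,\omega_{\overline c}(y))=ws$ from membership in $A(c_+,c_-)$ and then compare lengths.

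For (iii) and (iv) you take a genuinely different route. The paper invokes $(\omega 3)$ to place $c_+$ in the twin apartment $A(y,\omega_{\overline c}(y))$, uses convexity of twin apartments (Lemma~\ref{Mu97Lemma3.2}) together with (i) to deduce $x\in A(y,\omega_{\overline c}(y))$, and then reads off (iii) from thinness and the fact that $y$ is opposite $\omega_{\overline c}(y)$; (iv) follows from (iii) via Lemma~\ref{Mu97Lemma3.5}. You instead establish the identity $\delta_*(x,\omega_{\overline c}(y))=s$ by a direct gallery computation with (Tw2), and then obtain (iii) and (iv) simultaneously from Lemma~\ref{Mu97Lemma3.4}. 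Both arguments are sound; the paper's is more geometric and avoids the inductive walk, while yours is self-contained from the axioms. Note, incidentally, that the paper's approach hands you your key identity for free: once $x\in A(y,\omega_{\overline c}(y))$, one has $\delta_*(x,\omega_{\overline c}(y))=\delta_+(x,y)=s$ immediately, so your gallery induction is in effect reproving a special case of Lemma~\ref{Mu97Lemma3.2}.
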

\begin{proof}
	Part $(i)$ follows from $\ell_+(c_+, y) = \ell_+(c_+, x) +1$. Since $c_+ \in A(y, \omega_{\overline{c}}(y)) \cap A(x,  \omega_{\overline{c}}(x))$ we have $\ell_*(c_+, \omega_{\overline{c}}(y)) = \ell_+(c_+, y) = \ell_+(c_+, x) +1 = \ell_*(c_+, \omega_{\overline{c}}(x)) +1$ which yields part $(ii)$. To prove part $(iii)$ we use the fact that $c_+ \in A(y, \omega_{\overline{c}}(y))$. As $\proj_P c_+ = x$, it follows by Lemma \ref{Mu97Lemma3.2}, that $x \in A(y, \omega_{\overline{c}}(y))$. Applying Lemma \ref{Mu97Lemma3.2} again we obtain that $\proj_P \omega_{\overline{c}}(y) \in \{x, y\}$, since $A_+(y, \omega_{\overline{c}}(y))$ is thin. As $\ell_*(y, \omega_{\overline{c}}(y)) =0$ we have $\proj_P \omega_{\overline{c}}(y) = x$ as claimed. Part $(iv)$ follows now from part $(iii)$ and Lemma \ref{Mu97Lemma3.5}.
\end{proof}

\begin{lemma}\label{Mu97Lemma6.3}
	The mapping $\C_+ \to \Omega_{\overline{c}}: x \mapsto (x, \omega_{\overline{c}}(x))$ is an $s$-adjacence preserving bijection between $\C_+$ and $\Omega_{\overline{c}}$. In particular, $\Omega_{\overline{c}}$ is a simply connected subset  of $\overline{\C}$.
\end{lemma}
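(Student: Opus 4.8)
The plan is to mimic the proof of Lemma~\ref{Mu97Lemma3.6} with the $\pi$-retraction replaced by the $\omega$-retraction, i.e.\ using Lemma~\ref{omega123} in place of Lemma~\ref{pi123}. First I would check that $x \mapsto (x, \omega_{\overline{c}}(x))$ is a well-defined map from $\C_+$ into $\overline{\C}$: this is exactly assertion $(\omega 2)$, which says that $x$ and $\omega_{\overline{c}}(x)$ are opposite. Its image is $\Omega_{\overline{c}}$ by the definition of $\Omega_{\overline{c}}$, and it is injective because a point of $\overline{\C}$ is determined by its first coordinate. Hence it is a bijection from $\C_+$ onto $\Omega_{\overline{c}}$.

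Next I would verify that this bijection is an isomorphism of chamber systems from $\Cbf(\Delta_+)$ onto the chamber system $\Omega_{\overline{c}}$ obtained by restricting the adjacency relations of $\Opp(\Delta)$. If $x, y \in \C_+$ are $s$-adjacent, then $\omega_{\overline{c}}(x)$ and $\omega_{\overline{c}}(y)$ are $s$-adjacent by $(\omega 1)$, so $(x, \omega_{\overline{c}}(x))$ and $(y, \omega_{\overline{c}}(y))$ are $s$-adjacent in $\Opp(\Delta)$ by the very definition of adjacency there; conversely, if $(x, \omega_{\overline{c}}(x))$ and $(y, \omega_{\overline{c}}(y))$ are $s$-adjacent in $\Opp(\Delta)$, then reading off the first coordinate shows that $x$ and $y$ are $s$-adjacent in $\Cbf(\Delta_+)$. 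Thus $s$-adjacency is preserved in both directions, which is the content of the first sentence of the statement.

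Finally, the in-particular clause follows at once: $\Cbf(\Delta_+)$ is simply connected by Proposition~\ref{Mu97Proposition2.2}, and simple connectedness is invariant under isomorphism of chamber systems, so $\Omega_{\overline{c}}$ is simply connected. I do not expect any genuine obstacle here; the only point deserving a moment's attention is that the adjacency on $\Omega_{\overline{c}}$ inherited from $\Opp(\Delta)$ coincides with the one transported along the bijection from $\Cbf(\Delta_+)$, which is precisely what $(\omega 1)$ together with the definition of $\Opp(\Delta)$ provides.
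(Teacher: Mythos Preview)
Your proposal is correct and follows exactly the same approach as the paper's proof, which simply reads: ``The first statement is immediate from Lemma~\ref{omega123}. The second follows from Proposition~\ref{Mu97Proposition2.2}.'' You have merely unpacked these two sentences, using $(\omega 2)$ for well-definedness into $\overline{\C}$, $(\omega 1)$ for preservation of $s$-adjacency, and Proposition~\ref{Mu97Proposition2.2} for simple connectedness, precisely mirroring the argument for Lemma~\ref{Mu97Lemma3.6}.
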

\begin{proof}
	The first statement is immediate from Lemma \ref{omega123}. The second follows from Proposition \ref{Mu97Proposition2.2}.
\end{proof}

\begin{lemma}\label{Mu97Lemma6.5}
	Let $c \in \C_-$, let $\Sigma$ be an apartment of $\Delta_-$ containing $c$ and let $\gamma := (c, \Sigma)$. Let $x, y \in c^{op}$ and suppose that there exists a chamber $z\in \C_+$ such that $\delta_+(x, z) = \delta_*(c, z) = \delta_+(y, z)$. Then there exists an $\omega$-gallery joining $(x, c)$ and $(y, c)$ in $\Pi_{\gamma} \cap \Omega_{(z, \pi_{\gamma}(z))}$.
\end{lemma}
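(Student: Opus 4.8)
The plan is to build the required $\omega$-gallery by lifting, through the $\pi$-retraction $\pi_{\gamma}$, a gallery of $\Delta_+$ that runs from $x$ to $z$ and then back from $z$ to $y$ along minimal galleries. Put $\overline{z} := (z, \pi_{\gamma}(z))$; by ($\pi$2) the chambers $z$ and $\pi_{\gamma}(z)$ are opposite, so $\overline{z} \in \overline{\C}$, and $\pi_{\gamma}(x) = c = \pi_{\gamma}(y)$ because $\delta_-(c, \pi_{\gamma}(x)) = \delta_*(c,x) = 1_W$ and likewise for $y$. Since $\pi_{\gamma}$ preserves $s$-adjacency (($\pi$1)) and sends every chamber to an opposite one (($\pi$2)), any gallery $G = (x = a_0, \dots, a_m = y)$ of $\Delta_+$ lifts to a gallery $\overline{G} := \big((a_0, \pi_{\gamma}(a_0)), \dots, (a_m, \pi_{\gamma}(a_m))\big)$ of $\Opp(\Delta)$ that lies in $\Pi_{\gamma}$ and joins $(x,c)$ and $(y,c)$. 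Hence it suffices to find such a $G$ along which $\pi_{\gamma}$ agrees with $\omega_{\overline{z}}$ at every chamber: then every chamber of $\overline{G}$ lies in $\Pi_{\gamma} \cap \Omega_{\overline{z}}$, so $\overline{G}$ is an $\omega$-gallery, the chamber ``$\overline{c}$'' of the definition being $\overline{z}$. As the hypothesis is symmetric in $x$ and $y$, it is enough to prove that if $(x = u_0, \dots, u_n = z)$ is a minimal gallery of $\Delta_+$, then $\pi_{\gamma}(u_i) = \omega_{\overline{z}}(u_i)$ for all $0 \le i \le n$.

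To this end, first note that $\pi_{\gamma}$ maps $(u_0, \dots, u_n)$ to a gallery $(c = \pi_{\gamma}(u_0), \dots, \pi_{\gamma}(u_n) = \pi_{\gamma}(z))$ of $\Delta_-$ of length $n$, and $\ell_-(c, \pi_{\gamma}(z)) = \ell_*(c,z) = \ell_+(x,z) = n$; so this gallery is minimal, whence $\ell_-(c, \pi_{\gamma}(u_i)) = i$, i.e.\ $\ell_*(c, u_i) = i$, for every $i$. I then argue by descending induction on $i$. For $i = n$ the assertion holds because $\omega_{\overline{z}}(z)$ is determined by $\delta_-(\pi_{\gamma}(z), \omega_{\overline{z}}(z)) = \delta_+(z,z) = 1_W$, so $\omega_{\overline{z}}(z) = \pi_{\gamma}(z)$.

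Assume the assertion for some $i \ge 1$ and set $p := \pi_{\gamma}(u_i) = \omega_{\overline{z}}(u_i)$; let $s \in S$ be the type of the panel spanned by $u_{i-1}$ and $u_i$. By ($\pi$1) and ($\omega$1), both $\pi_{\gamma}(u_{i-1})$ and $\omega_{\overline{z}}(u_{i-1})$ are $s$-adjacent to $p$, so both belong to the $s$-panel $Q$ of $\Delta_-$ through $p$. Since $p = \omega_{\overline{z}}(u_i) \in A_-(z, \pi_{\gamma}(z))$ and $A_-(z, \pi_{\gamma}(z))$ is an apartment of $\Delta_-$, thinness gives $Q \cap A_-(z, \pi_{\gamma}(z)) = \{p, p'\}$ for a unique further chamber $p'$. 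Because $\omega_{\overline{z}}$ takes values in $A_-(z, \pi_{\gamma}(z))$, and $\delta_-(\pi_{\gamma}(z), \omega_{\overline{z}}(u_{i-1})) = \delta_+(z, u_{i-1})$ has length $\ell_+(z, u_{i-1}) = \ell_+(z, u_i) + 1 = \ell_-(\pi_{\gamma}(z), p) + 1$, we get $\omega_{\overline{z}}(u_{i-1}) \in Q \cap A_-(z, \pi_{\gamma}(z))$ with $\omega_{\overline{z}}(u_{i-1}) \ne p$, hence $\omega_{\overline{z}}(u_{i-1}) = p'$. On the other hand $c \in A(z, \pi_{\gamma}(z))$ since $\delta_-(c, \pi_{\gamma}(z)) = \delta_*(c,z)$, so Lemma \ref{Mu97Lemma3.2} yields $\proj_Q c \in A(z, \pi_{\gamma}(z))$, i.e.\ $\proj_Q c \in \{p, p'\}$. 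Now $\pi_{\gamma}(u_{i-1}) \in Q$ with $\ell_-(c, \pi_{\gamma}(u_{i-1})) = i-1 < i = \ell_-(c, p)$; since on the panel $Q$ every chamber other than $\proj_Q c$ lies at the same $\delta_-(c,\cdot)$-distance from $c$, this forces $\proj_Q c = \pi_{\gamma}(u_{i-1})$ and $\proj_Q c \ne p$, so $\pi_{\gamma}(u_{i-1}) = \proj_Q c = p' = \omega_{\overline{z}}(u_{i-1})$. This completes the induction.

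Applying the statement just proved to a minimal gallery from $x$ to $z$ and, with $x$ and $y$ exchanged, to a minimal gallery from $z$ to $y$, and concatenating, we obtain a gallery $G$ of $\Delta_+$ from $x$ to $y$ along which $\pi_{\gamma}$ and $\omega_{\overline{z}}$ agree everywhere; its lift $\overline{G}$ is the desired $\omega$-gallery joining $(x,c)$ and $(y,c)$ in $\Pi_{\gamma} \cap \Omega_{\overline{z}}$. I expect the inductive step to be the main obstacle: its real content is the identity $\proj_Q c = \omega_{\overline{z}}(u_{i-1})$, which is obtained by confining both chambers to the two-element set $Q \cap A_-(z, \pi_{\gamma}(z))$ — using Lemma \ref{Mu97Lemma3.2} for the ordinary projection and the definition of $\omega_{\overline{z}}$ for the other — and then using the length identity $\ell_*(c, u_i) = i$ to single out the correct one.
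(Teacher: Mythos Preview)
Your proof is correct and follows the same overall strategy as the paper: set $\overline{z}=(z,\pi_\gamma(z))$, show that $\pi_\gamma$ and $\omega_{\overline{z}}$ agree along a minimal gallery from $x$ to $z$ (and symmetrically from $y$ to $z$), and lift the concatenated gallery to $\Opp(\Delta)$. The only difference is in how you establish $\pi_\gamma(u_i)=\omega_{\overline{z}}(u_i)$: the paper simply observes that both retractions send the minimal gallery $(u_0,\dots,u_n)$ to a gallery of the same type from $c=\omega_{\overline{z}}(x)=\pi_\gamma(x)$ to $\pi_\gamma(z)=\omega_{\overline{z}}(z)$ of length $n=\ell_-(c,\pi_\gamma(z))$, and then invokes uniqueness of minimal galleries of a given type between two chambers; you instead carry out a descending induction, pinning down each step inside the two-element set $Q\cap A_-(z,\pi_\gamma(z))$ via Lemma~\ref{Mu97Lemma3.2} and a length comparison. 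Both arguments are valid; the paper's is shorter, while yours makes the mechanism more explicit and avoids citing the gallery-uniqueness fact.
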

\begin{proof}
	We put $\overline{z} := (z, \pi_{\gamma}(z))$. Then we obtain that $\omega_{\overline{z}}(z) = \pi_{\gamma}(z), \omega_{\overline{z}}(x) = \pi_{\gamma}(x) = c$ and $\delta_-(\omega_{\overline{z}}(x), \omega_{\overline{z}}(z)) = \delta_+(x, z) = \delta_-(\pi_{\gamma}(x), \pi_{\gamma}(z))$. Since $\pi_{\gamma}$ and $\omega_{\overline{z}}$ preserve $s$-adjacency by ($\pi 1$) and ($\omega 1$), it follows that they map any chamber on a minimal gallery joining $x$ any $z$ to a chamber on a minimal gallery joining $\pi_{\gamma}(x) = \omega_{\overline{z}}(x)$ to $\pi_{\gamma}(z) = \omega_{\overline{z}}(z)$. Thus we obtain $\pi_{\gamma}(v) = \omega_{\overline{z}}(v)$ for each chamber $v$ on a minimal gallery joining $x$ and $z$. The same is true for $y$ instead of $x$ and we obtain $\pi_{\gamma}(u) = \omega_{\overline{z}}(u)$ for each chamber $u$ on a minimal gallery joining $y$ and $z$. This yields the claim.
\end{proof}

\section{Constructing an isometry}

We recall that the set $S$ has at least three elements. In this subsection let $\overline{c} := (c_+, c_-) \in \overline{\C}, \overline{c}' := (c_+', c_-') \in \overline{\C'}$ and let $\phi: E_2(\overline{c}) \to E_2(\overline{c}')$ be an isometry. We set $\Sigma := A_-(c_+, c_-), \Sigma' := A_-(c_+', c_-')$ and denote the unique isometry from $\Sigma$ onto $\Sigma'$ extending the mapping $c_- \mapsto c_-'$ by $\alpha$. We set $\omega := \omega_{\overline{c}}, \omega' := \omega_{\overline{c}'}$ and $\Omega := \Omega_{\overline{c}}$. For $x\in \C_+$ we put $\overline{x} := (x, \omega(x))$.
	
By Lemma \ref{Mu97Lemma6.3} and Proposition \ref{Mu97Proposition5.3} we get a mapping $\overline{\phi}: \Omega \to \overline{\C'}$ and a system of isometries $\left( \phi_{\overline{x}}: E_2(\overline{x}) \to E_2(\overline{\phi}(\overline{x})) \right)_{x \in \C_+}$ such that
\begin{enumerate}[label=(\roman*), leftmargin=*]
	\item $\phi_{\overline{c}} = \phi$;
	
	\item $\phi_{\overline{x}}$ and $\phi_{\overline{y}}$ coincide on the intersection of their domains whenever $x, y$ are adjacent.
\end{enumerate}
Furthermore, we define the mapping $\phi_+: \C_+ \to \C_+', x \mapsto \phi_{\overline{x}}(x)$ and denote the restriction of $\phi_{\overline{x}}$ on $E_2(x)$ by $\phi_x$.

\begin{lemma}\label{Mu97Lemma6.7}
	The mapping $\phi_+$ is an isometry from $\C_+$ to $\C_+'$ and $\phi_x$ is the restriction of $\phi_+$ on $E_2(x)$ for each $x\in \C_+$.
\end{lemma}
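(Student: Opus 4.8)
The plan is to verify the hypotheses of Lemma \ref{Mu97Lemma4.5} for the family $(\phi_x)_{x \in \C_+}$, where $\phi_x$ is the restriction of $\phi_{\overline{x}}$ to $E_2(x)$. Lemma \ref{Mu97Lemma4.5} requires: that each $\phi_x$ is an isometry $E_2(x) \to E_2(\phi_+(x))$; and that $\phi_x$ and $\phi_y$ agree on $E_2(x) \cap E_2(y)$ whenever $x, y \in \C_+$ are adjacent. The first point is immediate, since $\phi_{\overline{x}}: E_2(\overline{x}) \to E_2(\overline{\phi}(\overline{x}))$ is an isometry by construction and $E_2(x) \subseteq E_2(\overline{x})$, $E_2(\phi_+(x)) \subseteq E_2(\overline{\phi}(\overline{x}))$; an isometry restricted to a subset is again an isometry, and by the rank $\geq 3$ convention $\phi_{\overline{x}}(x)$ is forced to be the chamber whose $E_2$ is the image, so $\phi_+(x) = \phi_x(x)$ is consistent. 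Once Lemma \ref{Mu97Lemma4.5} applies, it yields directly that $\phi_+$ is an isometry and that $\phi_x$ is the restriction of $\phi_+$ to $E_2(x)$, which is exactly the assertion.

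The real work is the adjacency compatibility. Let $x, y \in \C_+$ be $s$-adjacent. Property (ii) above already tells us that $\phi_{\overline{x}}$ and $\phi_{\overline{y}}$ agree on $E_2(\overline{x}) \cap E_2(\overline{y})$, so in particular on $E_2(x) \cap E_2(y)$ provided that set is contained in $E_2(\overline{x}) \cap E_2(\overline{y})$. Now $E_2(x) \subseteq E_2(\overline{x})$ and $E_2(y) \subseteq E_2(\overline{y})$ are clear from the definition $E_2(\overline{x}) = E_2(x) \cup E_2(\omega(x))$; but we need $E_2(x) \cap E_2(y) \subseteq E_2(\overline{y})$ as well, i.e.\ every chamber lying in a rank $\leq 2$ residue through $x$ and also in one through $y$ lies in a rank $\leq 2$ residue through $x$. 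This is automatic: if $z \in E_2(x) \cap E_2(y)$ then $z \in E_2(x)$ already, and $E_2(x) \subseteq E_2(\overline{x})$, and the point is to land inside $E_2(\overline{y})$ — here we use that $x$ and $y$ are $s$-adjacent, so $y \in E_1(x) \subseteq E_2(x)$, hence any rank $\leq 2$ residue $R$ through $x$ containing $z$ also contains $y$ when $s$ lies in the type of $R$, and otherwise one replaces $R$ by the residue of type (type of $R$) $\cup \{s\}$ — but that may have rank $3$. The cleaner route, and the one I would take, is to observe that it suffices to check agreement on $E_1(x) \cap E_1(y)$ together with a single additional residue, then invoke Lemma \ref{Mu97Lemma4.4}: apply that lemma inside each rank $2$ residue.

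Concretely: fix a subset $J \subseteq S$ with $|J| = 2$ and $s \in J$, let $R = R_J(x) = R_J(y)$ (these coincide since $x \sim_s y$), and consider the two isometries $\phi_x|_R$ and $\phi_y|_R$ from $R$ onto their images. By property (ii) they agree on $R \cap E_2(\overline{x}) \cap E_2(\overline{y})$; in particular they agree at $x$, at $y$, and on the whole $s$-panel $\P_s(x) = \P_s(y)$, which is contained in both $E_2(\overline{x})$ and $E_2(\overline{y})$. Moreover, by ($\omega 3$) we have $c_+ \in A(x, \omega(x))$, so $\omega$ restricted to a minimal gallery behaves apartment-like; using this one checks that $\phi_x$ and $\phi_y$ agree on the apartment-trace $\Sigma_R \cap R$ for a suitable apartment $\Sigma_R$ of $R$ through $x$ and $y$ (the apartment coming from a twin apartment containing $\overline{x}$ and $\overline{y}$, whose existence is guaranteed since both pairs are opposite). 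Then Lemma \ref{Mu97Lemma4.4}, applied with the apartment $\Sigma_R$ inside the rank $2$ building $R$, forces $\phi_x|_R = \phi_y|_R$. Ranging over all $J$ with $s \in J$ of size $2$, and noting that $E_2(x) \cap E_2(y)$ is covered by such residues $R$ together with $E_1(x) \cap E_1(y)$ (on which agreement is already known), we conclude $\phi_x$ and $\phi_y$ agree on $E_2(x) \cap E_2(y)$.

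The main obstacle is precisely this last compatibility verification: property (ii) only gives agreement on the intersection of the full $E_2(\overline{x})$ and $E_2(\overline{y})$, and one must massage this into agreement on the intersection of the plus-sides $E_2(x) \cap E_2(y)$, which a priori could contain chambers not visible to $\omega$. The device for getting around it is to work one rank $2$ residue at a time and appeal to the rigidity statement Lemma \ref{Mu97Lemma4.4}, whose hypotheses (agreement on $E_1$ and on an apartment trace) are cheap to check using that $\omega$ preserves $s$-adjacency (($\omega 1$)) and maps into the apartment $\Sigma = A_-(c_+, c_-)$, so that the relevant traces are genuine apartments of the residue. Once Lemma \ref{Mu97Lemma4.5} is invoked the conclusion is immediate.
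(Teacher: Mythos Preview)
Your overall strategy---verify the hypotheses of Lemma~\ref{Mu97Lemma4.5} and conclude---is exactly the paper's approach, and the first paragraph of your proposal is essentially the complete proof. The paper's argument is two lines: if $x,y\in\C_+$ are adjacent then $\overline{x},\overline{y}$ are adjacent in $\Opp(\Delta)$ (by ($\omega 1$)), so property~(ii) gives that $\phi_{\overline{x}}$ and $\phi_{\overline{y}}$ agree on $E_2(\overline{x})\cap E_2(\overline{y})$; hence $\phi_x$ and $\phi_y$ agree on $E_2(x)\cap E_2(y)$, and Lemma~\ref{Mu97Lemma4.5} finishes.

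The ``main obstacle'' you identify does not exist. You write that one must check $E_2(x)\cap E_2(y)\subseteq E_2(\overline{x})\cap E_2(\overline{y})$, and you worry in particular about the inclusion into $E_2(\overline{y})$. But this is immediate set theory: $E_2(x)\cap E_2(y)\subseteq E_2(y)\subseteq E_2(\overline{y})$, and symmetrically $E_2(x)\cap E_2(y)\subseteq E_2(x)\subseteq E_2(\overline{x})$. There is nothing to massage; the intersection of the plus-sides is automatically contained in the intersection of the full domains. Your subsequent detour through Lemma~\ref{Mu97Lemma4.4}, apartment traces, and rank~$2$ residues is therefore unnecessary (and the apartment-trace step, as you wrote it, is left as a sketch anyway). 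Drop everything after the first paragraph and replace it with the one-line containment above.
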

\begin{proof}
	Given two adjacent chambers $x, y \in \C_+$, then $\overline{x}$ and $\overline{y}$ are adjacent. By property $(ii)$ above it follows that $\phi_x$ and $\phi_y$ coincide on $E_2(x) \cap E_2(y)$. This shows that $\phi_+$ and $\left( \phi_x \right)_{x\in \C_+}$ satisfy the conditions of Lemma \ref{Mu97Lemma4.5} and we are done.
\end{proof}

\begin{lemma}\label{Mu97Lemma6.8}
	Let $P \subseteq \C_+, P' \subseteq \C_+'$ be panels of $\Delta_+$ and $\Delta_+'$ having the same type and let $x := \proj_P c_+, x' := \proj_{P'} c_+'$. Suppose that $\delta_+(c_+, x) = \delta_+'(c_+', x')$ and let $\psi: E_2((x, \omega(x))) \to E_2((x', \omega'(x')))$ be an isometry. Given $y\in P$, then $\psi(\omega(y)) = \omega'(\psi(y))$.
\end{lemma}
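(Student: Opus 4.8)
The plan is to reduce the statement to the codistance--projection description of $\omega(y)$ furnished by Lemma~\ref{Mu97Lemma6.2}\,(iv) and then to transport that description through $\psi$ by means of Lemma~\ref{Mu97Lemma4.2}. Let $s$ be the common type of $P$ and $P'$. Since $\psi$ preserves signs and $E_2(x) \subseteq \C_+$, $E_2(\omega(x)) \subseteq \C_-$, the map $\psi$ restricts to isometries $E_2(x) \to E_2(x')$ and $E_2(\omega(x)) \to E_2(\omega'(x'))$; as the rank is at least $3$, this forces $\psi(x) = x'$ and $\psi(\omega(x)) = \omega'(x')$. In particular the assertion is immediate for $y = x$, so from now on assume $y \neq x$.

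Because $x = \proj_P c_+$ and $y \in P \setminus \{x\}$ we have $\ell_+(c_+, y) = \ell_+(c_+, x) + 1$. Let $Q$ be the $s$-panel of $\Delta_-$ containing $\omega(x)$ and $\omega(y)$ (these two chambers are $s$-adjacent by~($\omega 1$)). Lemma~\ref{Mu97Lemma6.2}\,(iv) then gives $\proj_Q x = \omega(y)$, where $\proj_Q x$ is the codistance projection of $x \in \C_+$ onto the spherical (rank $1$) residue $Q \subseteq \C_-$. Since $Q \subseteq E_1(\omega(x)) \subseteq E_2(\omega(x))$ lies in the domain of $\psi$, the restriction $\psi|_Q$ is an isometry onto the $s$-panel $Q' := \psi(Q)$ of $\Delta_-'$, which contains $\psi(\omega(x)) = \omega'(x')$. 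The pair $(x, x')$ is $\psi|_Q$-admissible, being the restriction of the isometry $\psi$, so Lemma~\ref{Mu97Lemma4.2} yields $\psi(\omega(y)) = \psi(\proj_Q x) = \proj_{Q'} x'$.

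It remains to recognise $\proj_{Q'} x'$ as $\omega'(\psi(y))$. Put $y' := \psi(y)$. As $\psi|_P$ is an isometry onto the $s$-panel of $\Delta_+'$ through $\psi(x) = x'$, we get $\psi(P) = P'$ and hence $y' \in P' \setminus \{x'\}$; together with $x' = \proj_{P'} c_+'$ this gives $\ell_+'(c_+', y') = \ell_+'(c_+', x') + 1$. By~($\omega 1$) applied in $\Delta'$, the chamber $\omega'(y')$ is $s$-adjacent to $\omega'(x') \in Q'$, so $Q'$ is precisely the $s$-panel of $\Delta_-'$ containing $\omega'(x')$ and $\omega'(y')$. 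Applying Lemma~\ref{Mu97Lemma6.2}\,(iv) now inside $\Delta'$ gives $\proj_{Q'} x' = \omega'(y')$, and combining with the last identity of the previous paragraph we obtain $\psi(\omega(y)) = \omega'(y') = \omega'(\psi(y))$, as claimed.

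The argument has no deep step; the main thing to be careful about is bookkeeping: checking that all projections occurring are the codistance projections (legitimate since $x$ and $Q$ lie in opposite halves and $Q$ is a rank $1$, hence spherical, residue), and that $Q$ and $Q' = \psi(Q)$ really are the panels to which Lemma~\ref{Mu97Lemma6.2}\,(iv) refers on the two sides. The conceptual content is simply the observation that Lemma~\ref{Mu97Lemma6.2}\,(iv) exhibits $\omega(y)$ as a projection of $x$, so that $\omega(y)$ transforms functorially under any isometry defined on $E_2((x,\omega(x)))$.
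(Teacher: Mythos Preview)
Your proof is correct and follows essentially the same approach as the paper: express $\omega(y)$ as a projection via Lemma~\ref{Mu97Lemma6.2} and then transport it through $\psi$ using the fact that isometries preserve projections. The only cosmetic difference is that the paper invokes Lemma~\ref{Mu97Lemma4.3} (the packaged statement for opposite residues) together with part~(iii) of Lemma~\ref{Mu97Lemma6.2}, computing $\psi(\omega(y)) = \proj_{Q'}\psi(\proj_P \omega(y)) = \proj_{Q'}\psi(x) = \proj_{Q'}x' = \omega'(y')$, whereas you appeal to Lemma~\ref{Mu97Lemma4.2} directly and use part~(iv) of Lemma~\ref{Mu97Lemma6.2} on both sides; the content is the same.
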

\begin{proof}
	If $x=y$ there is nothing to prove, so we may assume that $x \neq y$. As $x = \proj_P c_+$ we have $\ell_+(c_+, y) = \ell_+(c_+, x) +1$. We put $y' := \psi(y)$. We have $y' \in P'$ and $x' \neq y'$ because $\psi$ is an isometry. As $x' = \proj_{P'} c_+'$ it follows $\ell_+'(c_+', y') = \ell_+'(c_+', x') +1$. Let $Q$ (resp. $Q'$) denote the panel containing $\omega(x)$ (resp. $\omega'(x')$) opposite to $P$ (resp. $P'$). By Lemma \ref{Mu97Lemma6.2} and Lemma \ref{Mu97Lemma4.3} it follows that $\psi(\omega(y)) = \proj_{Q'} \psi(\proj_P \omega(y)) = \proj_{Q'} \psi(x) = \proj_{Q'} x' = \omega'(y')$ which yields the claim.
\end{proof}

\begin{proposition}\label{Mu97Proposition6.6}
	Let $x, y \in \C_+$ be such that $\omega(x) = \omega(y)$, then the restrictions of $\phi_{\overline{x}}$ and $\phi_{\overline{y}}$ on $E_2(\omega(x))$ coincide.
\end{proposition}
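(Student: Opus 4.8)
The statement is a local‑to‑global consistency assertion: it says the isometry $E_2(d_-)\to E_2(\alpha(d_-))$ induced on the $\C_-$‑part by $\phi_{\overline v}$ depends only on the chamber $d_-:=\omega(v)\in\Sigma$, not on $v$. I would prove it by transporting $\phi$ along $\omega$‑galleries. \emph{Set‑up.} Put $d_-:=\omega(x)=\omega(y)$. By $(\omega2)$ the chamber $d_-$ is opposite both $x$ and $y$, so $x,y\in d_-^{op}$; since $d_-\in\Sigma=A_-(c_+,c_-)$, the pair $\gamma:=(d_-,\Sigma)$ defines a $\pi$‑retraction with $\pi_\gamma(x)=\pi_\gamma(y)=d_-$ (as $\delta_*(d_-,x)=\delta_*(d_-,y)=1_W$), and the identity $A(c_+,c_-)=\{z\mid\delta(z,c_+)=\delta(z,c_-)\}$ yields $\delta_*(d_-,c_+)=\delta_-(d_-,c_-)$, hence $\pi_\gamma(c_+)=c_-$ and $\overline c\in\Pi_\gamma$. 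I also record, by induction on $\ell_+(c_+,v)$ using Lemma~\ref{Mu97Lemma6.8} at the step $v^-:=\proj_{\P_s(v)}c_+\to v$ (note $\phi_+(v^-)=\proj_{\phi_+(\P_s(v))}c_+'$ since $\phi_+(c_+)=c_+'$), that $\overline\phi(\overline v)=(\phi_+(v),\omega'(\phi_+(v)))$ and therefore $\phi_{\overline v}(\omega(v))=\omega'(\phi_+(v))=\alpha(\omega(v))$ for every $v\in\C_+$. In particular $\phi_{\overline x}(d_-)=\phi_{\overline y}(d_-)=\alpha(d_-)$, so $\phi_{\overline x}$ and $\phi_{\overline y}$ both carry $E_2(d_-)$ isometrically onto $E_2(\alpha(d_-))$, and the claim is that these two restrictions coincide.

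\emph{Reduction.} Apply Proposition~\ref{Mu97Proposition5.3} to the simply connected set $\Pi_\gamma$ (Lemma~\ref{Mu97Lemma3.6}), with base $\overline c$ and isometry $\phi$, obtaining a system $(\psi_{\overline v})_{\overline v\in\Pi_\gamma}$ with $\psi_{\overline c}=\phi$. Because the transport of an isometry along a gallery (Proposition~\ref{Mu97Proposition4.8}) depends only on the gallery and not on any ambient chamber set, $\psi_{(x,d_-)}$ agrees with $\phi_{\overline x}$ as soon as $\overline c$ and $(x,d_-)=(x,\omega(x))=(x,\pi_\gamma(x))$ are joined by a gallery lying in $\Pi_\gamma\cap\Omega$, and similarly for $y$; granting this identification, it suffices to show $\psi_{(x,d_-)}$ and $\psi_{(y,d_-)}$ agree on $E_2(d_-)$. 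Now invoke Lemma~\ref{Mu97Lemma6.10} with $\epsilon=+$ and $c:=d_-$: there are $x=x_0,\dots,x_k=y$ in $d_-^{op}$ and $z_1,\dots,z_k\in\C_+$ with $\delta_*(d_-,z_\lambda)=\delta_+(x_{\lambda-1},z_\lambda)=\delta_+(x_\lambda,z_\lambda)$. Each $x_\lambda$ lies in $d_-^{op}$, so $(x_\lambda,d_-)\in\Pi_\gamma$, and Lemma~\ref{Mu97Lemma6.5} (with $c=d_-$, $z=z_\lambda$) supplies an $\omega$‑gallery $\overline G_\lambda$ joining $(x_{\lambda-1},d_-)$ and $(x_\lambda,d_-)$ inside $\Pi_\gamma$ (indeed inside $\Pi_\gamma\cap\Omega_{\overline z_\lambda}$ with $\overline z_\lambda:=(z_\lambda,\pi_\gamma(z_\lambda))$). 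Concatenating the $\overline G_\lambda$ and transporting $\psi$, everything reduces to the single step: if $\overline G$ is an $\omega$‑gallery in $\Pi_\gamma$ joining $(u,d_-)$ and $(v,d_-)$ with $u,v\in d_-^{op}$, then $\psi_{(u,d_-)}$ and $\psi_{(v,d_-)}$ agree on $E_2(d_-)$.

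\emph{The crux, and the main obstacle.} Realising $\overline G$ as an $\omega$‑gallery for a pair $\overline z=(z,\pi_\gamma(z))$, one has $\omega_{\overline z}(u)=\omega_{\overline z}(v)=d_-$, and $\overline G$ runs along minimal galleries from $u$ and from $v$ to $z$, on which $\pi_\gamma$ and $\omega_{\overline z}$ agree (Lemma~\ref{Mu97Lemma6.5}); at each adjacency step of $\overline G$ the two neighbouring $\psi$‑isometries agree on the intersection of their domains. I would promote this to agreement on the whole fibre $E_2(d_-)$ by applying Lemma~\ref{Mu97Lemma6.8} — with the pair $\overline z$ in place of $\overline c$, which is legitimate since its proof uses only general properties of $\omega$‑retractions together with Lemmas~\ref{Mu97Lemma3.5} and~\ref{Mu97Lemma4.3} — to each panel crossed by $\overline G$: this shows that every isometry along $\overline G$ sends the $\omega_{\overline z}$‑image of a chamber of that panel to the $\omega'$‑image of its image, which pins down how $E_1(d_-)$ is transported, and Lemma~\ref{Mu97Lemma4.4}, applied against the apartment through $d_-$ on which the isometries are controlled by the codistance to the $\C_+$‑part, then upgrades agreement on $E_1(d_-)$ to agreement on $E_2(d_-)$. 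The two genuinely delicate points are exactly these: first, that transport along an $\omega$‑gallery leaves the restriction to $E_2(d_-)$ undisturbed; and second, the identification $\psi_{(x,d_-)}=\phi_{\overline x}$ of the auxiliary $\Pi_\gamma$‑system with the $\Omega$‑system in terms of which the proposition is phrased. The reductions via Lemmas~\ref{Mu97Lemma6.10} and~\ref{Mu97Lemma6.5} are, by comparison, routine.
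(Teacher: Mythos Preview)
Your approach has a genuine structural gap. You have imported the machinery of $\pi$-retractions, Lemma~\ref{Mu97Lemma6.10} and Lemma~\ref{Mu97Lemma6.5} --- which in the paper are used only in the proof of Theorem~\ref{Maintheorem}, \emph{after} Proposition~\ref{Mu97Proposition6.6} and its Corollary~\ref{Mu97Corollary6.9} are available --- in order to prove Proposition~\ref{Mu97Proposition6.6} itself. Your ``crux'' step (transport along an $\omega_{\overline z}$-gallery leaves the restriction to $E_2(d_-)$ unchanged) is precisely Corollary~\ref{Mu97Corollary6.9} with $\overline z$ in place of $\overline c$; your sketch of how to establish it (Lemma~\ref{Mu97Lemma6.8} at each panel, then Lemma~\ref{Mu97Lemma4.4}) is not a proof. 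To apply Lemma~\ref{Mu97Lemma6.8} at a panel on $\overline G$ you must already know that the transported isometry at the projection chamber has target of the form $E_2((x_0',\omega'_{\overline z'}(x_0')))$ for a suitable $\overline z'$, and that $\delta_+(z,x_0)=\delta_+'(z',x_0')$; neither is automatic, and establishing them amounts to rerunning the paper's first two paragraphs for the $\overline z$-system. More seriously, you never explain how the panel-by-panel information from Lemma~\ref{Mu97Lemma6.8} yields agreement of $\psi_{(u,d_-)}$ and $\psi_{(v,d_-)}$ on $E_1(d_-)$: the intermediate chambers of $\overline G$ have $\C_-$-component $\omega_{\overline z}(w)\neq d_-$, so $E_1(d_-)$ is not even in the domain of the intermediate isometries, and there is no induction parameter in your argument that closes this gap. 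The ``identification $\psi_{(x,d_-)}=\phi_{\overline x}$'' is also only asserted, not proved (it does hold, via a minimal gallery from $c_+$ to $x$, which lies in $A_+(x,d_-)$ by $(\omega3)$ and convexity, forcing $\pi_\gamma=\omega$ along it --- but you should say so).

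The paper's proof stays entirely inside the $\Omega$-system and is a short direct induction on $\ell_+(c_+,x)=\ell_+(c_+,y)$. Your first paragraph already contains its first ingredient: $\phi_{\overline u}(\omega(u))=\alpha(\omega(u))$ for all $u$. The second ingredient, which you omit, is that every $\phi_{\overline u}$ agrees with $\alpha$ on $\Sigma\cap E_2(\omega(u))$: for $z\in\Sigma\cap E_2(\omega(u))$ pick $v\in E_2(u)$ with $\omega(v)=z$, join $u$ to $v$ inside a rank~$2$ residue, and use compatibility~(ii) together with the first ingredient. For the induction step, choose $s$ with $\ell(\delta_+(c_+,x)s)=\ell_+(c_+,x)-1$, set $x_1:=\proj_{\P_s(x)}c_+$ and $y_1:=\proj_{\P_s(y)}c_+$; then $\omega(x_1)=\omega(y_1)$, so by induction $\phi_{\overline{x_1}}$ and $\phi_{\overline{y_1}}$ agree on $E_2(\omega(x_1))\supseteq E_1(\omega(x))$, and compatibility~(ii) gives $\phi_{\overline x}=\phi_{\overline{x_1}}=\phi_{\overline{y_1}}=\phi_{\overline y}$ on $E_1(\omega(x))$. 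Combined with agreement on $\Sigma\cap E_2(\omega(x))$, Lemma~\ref{Mu97Lemma4.4} finishes. No $\Pi$-system, no Lemma~\ref{Mu97Lemma6.10}, no auxiliary base point $\overline z$ is needed.
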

\begin{proof}
	Using Lemma \ref{Mu97Lemma6.8} it follows by induction on $\ell_+(c_+, u)$ that $\phi_{\overline{u}}(\omega(u)) = \omega'(\phi_+(u))$ for each $u\in \C_+$. As $\phi_+$ is an isometry mapping $c_+$ onto $c_+'$ (cf. Lemma \ref{Mu97Lemma6.7}) it follows that $\alpha(\omega(u)) = \omega'(\phi_+(u))$ for each $u\in \C_+$.
	
	Let $u\in \C_+$ and let $z\in E_2(\omega(u)) \cap \Sigma$. Then there exists $v\in E_2(u)$ with $z=\omega(v)$. Let $(u=x_0, \ldots, x_k = v)$ be a gallery in a rank $2$ residue joining $u$ and $v$. It follows that $v\in E_2(x_{\lambda})$ and hence $z\in E_2(\omega(x_{\lambda}))$ for each $0 \leq \lambda \leq k$. Using property $(ii)$ of the system $\left( \phi_{\overline{u}} \right)_{u\in \C_+}$, it follows by induction on $k$ that $\phi_{\overline{u}}(z) = \phi_{\overline{v}}(z)$. Combining this with the previous considerations we obtain $\phi_{\overline{u}}(z) = \alpha(z)$ for each $z \in E_2(\omega(u)) \cap \Sigma$.
		
	We complete the proof of the proposition by induction on $\ell_+(c_+, x) = \ell_+(c_+, y)$. If $\ell_+(c_+, x) = 0$ then $x= c_+= y$ and there is nothing to prove. Let $\ell_+(c_+, x) >0$, then there exists $s\in S$ such that $\ell(\delta_+(c_+, x)s) = \ell_+(c_+, x) -1$. Let $P_x, P_y$ denote the $s$-panels containing $x$ and $y$, respectively, and put $x_1 := \proj_{P_x} c_+, y_1 := \proj_{P_y} c_+$. Then $\ell_+(c_+, x_1) = \ell_+(c_+, x) -1$ and we obtain $\omega(x_1) = \omega(y_1)$. Using property $(ii)$ of the system $\left( \phi_{\overline{x}} \right)_{x\in \C_+}$ and the induction assumption we obtain $\phi_{\overline{x}}(z) = \phi_{\overline{x}_1}(z) = \phi_{\overline{y}_1}(z) = \phi_{\overline{y}}(z)$ for each $z\in E_1(\omega(x))$. By the previous considerations we have that $\phi_{\overline{x}}$ and $\phi_{\overline{y}}$ agree on $E_2(\omega(x)) \cap \Sigma$ and therefore the claim follows from Lemma \ref{Mu97Lemma4.4}.
\end{proof}

A consequence of Proposition \ref{Mu97Proposition6.6} is the following corollary which will be needed in the next subsection.

\begin{corollary}\label{Mu97Corollary6.9}
	Let $\overline{x} := (x, z), \overline{y} := (y, z) \in \overline{\C}$, let $\overline{x}' \in \overline{\C'}$ and let $\psi: E_2(\overline{x}) \to E_2(\overline{x}')$ be an isometry. Let $\overline{G}$ be an $\omega$-gallery joining $\overline{x}$ and $\overline{y}$ in $\Opp(\Delta)$. Then $\psi_{\overline{y}, \overline{G}}$ and $\psi$ coincide on $E_2(z)$.
\end{corollary}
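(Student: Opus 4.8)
The plan is to deduce this from Proposition~\ref{Mu97Proposition6.6} after moving the base point of the relevant $\omega$-based system to a chamber witnessing the $\omega$-gallery $\overline{G}$. Since $\overline{G}$ is an $\omega$-gallery, by definition there is a chamber $\overline{c}_0 \in \overline{\C}$ such that every chamber of $\overline{G}$ lies in $\Omega_{\overline{c}_0}$; put $\omega_0 := \omega_{\overline{c}_0}$. By definition of $\Omega_{\overline{c}_0}$ we have $\overline{x} = (x, \omega_0(x))$ and $\overline{y} = (y, \omega_0(y))$, so the hypotheses $\overline{x} = (x,z)$ and $\overline{y} = (y,z)$ force $\omega_0(x) = z = \omega_0(y)$. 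Note also that $\overline{c}_0 \in \Omega_{\overline{c}_0}$ and that, by Lemma~\ref{Mu97Lemma6.3}, $\Omega_{\overline{c}_0}$ is a connected and simply connected subset of $\overline{\C}$.

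First I would transport $\psi$ back to $\overline{c}_0$. Using connectedness of $\Omega_{\overline{c}_0}$, choose a gallery $\overline{H}$ from $\overline{x}$ to $\overline{c}_0$ lying in $\Omega_{\overline{c}_0}$ and set $\phi_0 := \psi_{\overline{c}_0, \overline{H}}$, an isometry with domain $E_2(\overline{c}_0)$. Applying Proposition~\ref{Mu97Proposition5.3} to the simply connected set $\Omega_{\overline{c}_0}$ with base chamber $\overline{c}_0$ and base isometry $\phi_0$ yields a system of isometries $\left( \phi_{0,\overline{v}} \right)_{v \in \C_+}$, each $\phi_{0,\overline{v}}$ defined on $E_2(\overline{v})$, with $\phi_{0,\overline{c}_0} = \phi_0$ and with $\phi_{0,\overline{v}}$ and $\phi_{0,\overline{w}}$ agreeing on the intersection of their domains whenever $\overline{v}, \overline{w} \in \Omega_{\overline{c}_0}$ are adjacent in $\Opp(\Delta)$.

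Next I would identify the members of this system at $\overline{x}$ and $\overline{y}$. By the construction in Proposition~\ref{Mu97Proposition5.3} together with Lemma~\ref{Mu97Lemma5.2}(iii) and simple connectedness, $\phi_{0,\overline{v}}$ is the isometry obtained from $\phi_0$ by the recursion along \emph{any} gallery from $\overline{c}_0$ to $\overline{v}$ inside $\Omega_{\overline{c}_0}$. Using the gallery $\overline{H}^{-1}$ and then Lemma~\ref{Mu97Lemma5.2}(i) gives $\phi_{0,\overline{x}} = \psi_{\overline{x}, \overline{H}\overline{H}^{-1}} = \psi$, and since $\overline{G}$ is a gallery from $\overline{x}$ to $\overline{y}$ inside $\Omega_{\overline{c}_0}$ the recursion then gives $\phi_{0,\overline{y}} = \psi_{\overline{y}, \overline{G}}$. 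Because $\omega_0(x) = z = \omega_0(y)$, Proposition~\ref{Mu97Proposition6.6} applied to the system $\left( \phi_{0,\overline{v}} \right)_{v \in \C_+}$ (that is, with $\overline{c}_0$ and $\phi_0$ in the roles of $\overline{c}$ and $\phi$) shows that $\phi_{0,\overline{x}}$ and $\phi_{0,\overline{y}}$ have the same restriction to $E_2(z)$; this is exactly the assertion that $\psi$ and $\psi_{\overline{y},\overline{G}}$ coincide on $E_2(z)$.

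The substantive work is entirely contained in Proposition~\ref{Mu97Proposition6.6}; the corollary is really that proposition read with a shifted base chamber. The one point requiring care is the bookkeeping of the last two paragraphs, namely that the system produced by Proposition~\ref{Mu97Proposition5.3} over $\Omega_{\overline{c}_0}$, when anchored at $\overline{c}_0$ via $\phi_0$, has $\psi$ and $\psi_{\overline{y},\overline{G}}$ as its members at $\overline{x}$ and $\overline{y}$; this rests only on the path-independence furnished by simple connectedness (Lemma~\ref{Mu97Lemma5.2}) and on the fact that $\overline{c}_0$ is precisely the chamber defining the ambient set $\Omega_{\overline{c}_0}$, which is what makes Proposition~\ref{Mu97Proposition6.6} directly applicable.
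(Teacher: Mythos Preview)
Your proof is correct and follows the paper's approach; the paper's own proof is the single line ``Since $\omega(x) = z = \omega(y)$, the claim follows from Proposition~\ref{Mu97Proposition6.6} and the definition of $\psi_{\overline{y}, \overline{G}}$,'' and what you have written is precisely the bookkeeping that sentence suppresses. In particular, your step of transporting $\psi$ back to the witnessing chamber $\overline{c}_0$ via $\overline{H}$ so that Proposition~\ref{Mu97Proposition6.6} can be invoked with $(\overline{c}_0,\phi_0)$ in the role of $(\overline{c},\phi)$ is exactly the intended reduction, and your identifications $\phi_{0,\overline{x}} = \psi$ and $\phi_{0,\overline{y}} = \psi_{\overline{y},\overline{G}}$ via Lemma~\ref{Mu97Lemma5.2} and the uniqueness in Proposition~\ref{Mu97Proposition5.3} are the right justifications.
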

\begin{proof}
	Since $\omega(x) = z = \omega(y)$, the claim follows from Proposition \ref{Mu97Proposition6.6} and the definition of $\psi_{\overline{y}, \overline{G}}$.
\end{proof}

\subsection*{Proof of the main theorem}

\begin{theorem}\label{Maintheorem}
	Let $\overline{c} := (c_+, c_-) \in \overline{\C}, \overline{c}' := (c_+', c_-') \in \overline{\C'}$. Then every isometry $\phi: E_2(c_+) \cup \{c_-\} \to E_2(c_+') \cup \{c_-'\}$ extends to an isometry from $\C_+ \cup E_2(c_-)$ onto $\C_+' \cup E_2(c_-')$.
\end{theorem}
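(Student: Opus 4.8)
The plan is to first use the already-established local extension results to promote the isometry~$\phi$ defined on $E_2(c_+)\cup\{c_-\}$ to an isometry defined on all of $\C_+$, and only then to extend over $E_2(c_-)$. By Proposition~\ref{Mu97Proposition4.7} we may assume without loss of generality that $\phi$ is defined on $E_2(\overline{c}) = E_2(c_+)\cup E_2(c_-)$. Now I would invoke the machinery built in Section~\ref{Sectiontwinbuildings} and Section~5 applied to the $\omega$-retraction: by Lemma~\ref{Mu97Lemma6.3} the set $\Omega = \Omega_{\overline{c}}$ is a simply connected subset of $\overline{\C}$ containing $\overline{c}$, so Proposition~\ref{Mu97Proposition5.3} yields a map $\overline{\phi}\colon\Omega\to\overline{\C'}$ and a compatible system of isometries $(\phi_{\overline{x}})_{x\in\C_+}$ with $\phi_{\overline{c}}=\phi$. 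Lemma~\ref{Mu97Lemma6.7} then already tells us that the resulting map $\phi_+\colon\C_+\to\C_+'$, $x\mapsto\phi_{\overline{x}}(x)$, is an isometry and that each $\phi_x$ is its restriction to $E_2(x)$. This settles the ``$\C_+$ part'' of the extension.

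The remaining task is to check that $\phi_+$ together with the prescribed values of $\phi$ on $E_2(c_-)$ fit together into a single isometry on $\C_+\cup E_2(c_-)$; by Lemma~\ref{ZusammengesetzteIsometrie} it suffices to show that $(z,\phi(z))$ is $\phi_+$-admissible for each $z\in E_2(c_-)\setminus\C_+$, i.e. for $z\in E_2(c_-)$ (a chamber of $\Delta_-$). Fix such a $z$; it lies in some spherical residue $R\subseteq\C_-$ of rank at most~$2$ through $c_-$. I would first handle the case $z=c_-$: here the key point is Lemma~\ref{Mu97Lemma4.6}, which reduces $\phi_+$-admissibility of $(c_-,c_-')$ to the inclusion $\phi_+(c_-^{op})\subseteq(c_-')^{op}$. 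To verify this inclusion, take $x\in c_-^{op}\subseteq\C_+$ and use Lemma~\ref{Mu97Lemma6.10}: there is a sequence $x=x_0,\dots,x_k=c_+$ in $c_-^{op}$ and chambers $z_\lambda\in\C_+$ with $\delta_*(c_-,z_\lambda)=\delta_+(x_{\lambda-1},z_\lambda)=\delta_+(x_\lambda,z_\lambda)$. For each consecutive pair, Lemma~\ref{Mu97Lemma6.5} provides an $\omega$-gallery in $\Opp(\Delta)$ joining $(x_{\lambda-1},c_-)$ and $(x_\lambda,c_-)$, and along such a gallery Corollary~\ref{Mu97Corollary6.9} shows that the induced isometries all restrict to the \emph{same} map on $E_2(c_-)$; in particular $\phi_{\overline{x_\lambda}}$ and $\phi_{\overline{x_{\lambda-1}}}$ agree on $E_2(c_-)$. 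Chaining these equalities from $\lambda=1$ to $k$, together with $\phi_{\overline{c}}=\phi$, shows that the isometry at $\overline{x}=(x,\omega(x))$ restricts on $c_-$ to $\phi(c_-)=c_-'$, and since that isometry sends $x$ to $\phi_+(x)$ and preserves codistance, we get $\delta_*'(\phi_+(x),c_-')=\delta_*(x,c_-)=1_W$, i.e. $\phi_+(x)\in(c_-')^{op}$. Hence $(c_-,c_-')$ is $\phi_+$-admissible.

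For a general $z\in E_2(c_-)$ I would run essentially the same argument relative to the residue $R=R_J(c_-)$ it lies in: one knows already (from the previous step, or directly) that $(c_-,c_-')$ is $\phi_+$-admissible, and then Lemma~\ref{Mu97Lemma4.2} / Lemma~\ref{Mu97Lemma4.3} together with the fact that $\phi$ (or $\phi\cup\{(c_-,c_-')\}$) is an isometry on the opposite residues $R_{\pm}$ identifies $\phi(z)$ intrinsically as $\proj_{R'}\bigl(\phi_+(\proj_{R_+}z)\bigr)$ where $R_+\subseteq\C_+$ is the residue opposite to $R$ through $c_+$; admissibility of $(z,\phi(z))$ then follows because codistances from any $x\in\C_+$ to $z$ are controlled by $\delta_*(x,\proj_R x)$ and the residue structure, all of which $\phi_+$ preserves. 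Assembling these admissibility statements and applying Lemma~\ref{ZusammengesetzteIsometrie} with $\mathcal S=\C_+$, $\mathcal X=E_2(c_-)$ (intersecting in nothing once we remove the overlap, or handling the overlap by the uniqueness in Proposition~\ref{Mu97Proposition4.7}) produces the desired extension $\psi\colon\C_+\cup E_2(c_-)\to\C_+'\cup E_2(c_-')$.

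\textbf{Main obstacle.} The delicate point is the well-definedness argument in the second paragraph: a priori the isometry $\phi_{\overline{x}}$ depends on the choice of $\omega$-gallery from $\overline{c}$ to $\overline{x}$ inside $\Omega$, and one must know that its restriction to $E_2(c_-)$ is independent of all such choices. This is exactly what Proposition~\ref{Mu97Proposition6.6} and Corollary~\ref{Mu97Corollary6.9} were engineered to give — that two chambers $x,y\in\C_+$ with $\omega(x)=\omega(y)=z$ yield isometries agreeing on $E_2(z)$ — but marshalling Lemmas~\ref{Mu97Lemma6.5} and~\ref{Mu97Lemma6.10} so that every pair in $c_-^{op}$ is linked by a \emph{chain} of such $\omega$-galleries, and checking that the chaining is compatible with $\phi$ on $E_2(c_-)$, is the technical heart of the proof; everything else is bookkeeping with the admissibility lemmas.
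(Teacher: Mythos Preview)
There is a genuine gap, and it lies precisely at the point you flag as the ``main obstacle'': you build the system $(\phi_{\overline{x}})$ on $\Omega_{\overline{c}}$ via the $\omega$-retraction, but this is the wrong retraction for the chaining argument. For $x\in c_-^{op}$ one has $\omega_{\overline{c}}(x)=c_-$ if and only if $\delta_+(c_+,x)=1_W$, i.e.\ $x=c_+$. Hence for any other $x\in c_-^{op}$ the pair $\overline{x}=(x,\omega_{\overline{c}}(x))$ has second coordinate $\neq c_-$, and $c_-$ need not lie in $E_2(\overline{x})=E_2(x)\cup E_2(\omega_{\overline{c}}(x))$ at all; the phrase ``the isometry at $\overline{x}=(x,\omega(x))$ restricts on $c_-$'' is therefore meaningless. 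Likewise the $\omega$-galleries produced by Lemma~\ref{Mu97Lemma6.5} join $(x_{\lambda-1},c_-)$ to $(x_\lambda,c_-)$ inside $\Pi_\gamma\cap\Omega_{(z_\lambda,\pi_\gamma(z_\lambda))}$; these chambers do not belong to $\Omega_{\overline{c}}$, so your system does not furnish isometries along them, and Corollary~\ref{Mu97Corollary6.9} cannot be fed the $\phi_{\overline{x_\lambda}}$ you constructed.

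The paper's proof repairs exactly this: it builds the system $(\phi_{\overline{x}})$ on $\Pi_\gamma$ via the $\pi$-retraction (Lemma~\ref{Mu97Lemma3.6}), so that for every $x\in c_-^{op}$ one has $\pi_\gamma(x)=c_-$ and hence $E_2(c_-)\subseteq E_2(\overline{x})$. The $\omega$-galleries of Lemma~\ref{Mu97Lemma6.5} then lie \emph{inside} $\Pi_\gamma$, so Corollary~\ref{Mu97Corollary6.9} applies to the $\Pi$-based isometries and yields a single well-defined $\phi_-$ on $E_2(c_-)$. This interplay of both retractions is the point of the paper (cf.\ Remark~3 in the introduction). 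Once $\phi_-$ is in hand, admissibility of an arbitrary $z\in E_2(c_-)$ is obtained directly: for $v\in z^{op}$ pick $x\in c_-^{op}$ with $v\in E_2(x)$ (Lemma~\ref{Mu97Lemma3.3}); then $\phi_{\overline{x}}$ is an isometry on $E_2(x)\cup E_2(c_-)$ sending $v\mapsto\phi_+(v)$ and $z\mapsto\phi_-(z)$, whence $\phi_+(v)\in\phi_-(z)^{op}$ and Lemma~\ref{Mu97Lemma4.6} applies. Your projection argument for general $z$ is not needed.
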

\begin{proof}
	By Proposition \ref{Mu97Proposition4.7} the isometry $\phi$ extends to an isometry from $E_2(\overline{c})$ onto $E_2(\overline{c}')$. We choose an apartment $\Sigma \subseteq \C_-$ containing $c_-$ and set $\pi := \pi_{(c_-, \Sigma)}, \Pi := \Pi_{(c_-, \Sigma)}$. For $x\in \C_+$ we put $\overline{x} := (x, \pi(x))$. By Lemma \ref{Mu97Lemma3.6} and  Proposition \ref{Mu97Proposition5.3} we obtain a mapping $\overline{\phi}: \Pi \to \overline{\C'}$ and a system of isometries $\left( \phi_{\overline{x}}: E_2(\overline{x}) \to E_2(\overline{\phi}(\overline{x})) \right)_{x\in \C_+}$ with the properties $(i)$ and $(ii)$ of the previous subsection. We define the mapping $\phi_+: \C_+ \to \C_+', x \mapsto \phi_{\overline{x}}(x)$ and we denote the restriction of $\phi_{\overline{x}}$ on $E_2(x)$ by $\phi_x$. Then $\phi_+$ is an isometry from $\C_+$ onto $\C_+'$ and $\phi_+, \phi_x$ agree on $E_2(x)$ for each $x\in \C_+$ by Lemma \ref{Mu97Lemma6.7}.
	
	Let $x, y \in c_-^{op}$. Using Lemma \ref{Mu97Lemma6.10} there exist $k\in \NN$, a sequence $x_0 := x, \ldots, x_k := y$ of chambers in $c_-^{op}$ and a sequence $z_1, \ldots, z_k$ of chambers in $\C_+$ such that $\delta_*(c_-, z_{\lambda}) = \delta_+(x_{\lambda -1}, z_{\lambda}) = \delta_+(x_{\lambda}, z_{\lambda})$ for each $1 \leq \lambda \leq k$. By Lemma \ref{Mu97Lemma6.5} there exists for any $1 \leq \lambda \leq k$ an $\omega$-gallery joining $(x_{\lambda -1}, c_-)$ and $(x_{\lambda}, c_-)$ in $\Pi_{\gamma} \cap \Omega_{(z_{\lambda}, \pi_{\gamma}(z_{\lambda}))}$. Now we obtain	that $\phi_{\overline{x}}, \phi_{\overline{y}}$ agree on $E_2(c_-)$ by Corollary \ref{Mu97Corollary6.9}. We let $\phi_-: E_2(c_-) \to E_2(c_-'), z \mapsto \phi_{\overline{x}}(z)$ denote this common restriction for some $x \in c_-^{op}$ and for $z\in E_2(c_-)$ we put $z' := \phi_-(z)$.
	
	We want to show now, that $\phi_+(z^{op}) \subseteq \left( z' \right)^{op}$ for each $z\in E_2(c_-)$. Let $v\in z^{op}$ then there exists $x\in c_-^{op}$ such that $v\in E_2(x)$ by Lemma \ref{Mu97Lemma3.3}. Since $\phi_+(v) = \phi_x(v), \phi_x = \phi_{\overline{x}} \vert_{E_2(x)}$ and since $\phi_{\overline{x}}$ is an isometry from $E_2(\overline{x})$ onto $E_2(\overline{\phi}(\overline{x}))$ whose restriction on $E_2(c_-)$ is $\phi_-$ it follows that $\phi_+(v) \in \left( z' \right)^{op}$.
	
	By Lemma \ref{Mu97Lemma4.6} the pair $(z, z') = (z, \phi_-(z))$ is $\phi_+$-admissible. Applying Lemma \ref{ZusammengesetzteIsometrie} to the isometries $\phi_+$ and $\phi_-$ we obtain an isometry $\phi_+ \cup \phi_-$ as required.
\end{proof}

\end{document}